 \newtheorem{theo}{\textbf{Theorem}\ }
[section]
\newtheorem{lemma}[theo]{\textbf{Lemma}\ }
\newtheorem{coro}[theo]{Corollary\ }
\newtheorem{prop}[theo]{\textbf{Proposition}\ }
\newtheorem{remark}[theo]{\textbf{Remark}\ }
\begin{document}

\title{A local limit theorem for lattice oscillating random walks }
\maketitle
\begin{center}
M. Peign\'e \footnote{
			Institut Denis Poisson UMR 7013,  Universit\'e de Tours, Universit\'e d'Orl\'eans, CNRS  France.  email: peigne@univ-tours.fr},
		{ C. Pham} \footnote{
			CERADE ESAIP, 18 rue du 8 mai 1945 - CS 80022 - 49180 St-Barth\'elemy d'Anjou; email: dpham@esaip.org}$^,$\footnote{
			LAREMA UMR CNRS 6093, Angers, France. } 
			$\&$
		{ T. D. Vo} \footnote{Department of Mathematics, FPT University, District 9, Ho Chi Minh City, Vietnam. email: duyvt15@fe.edu.vn} 
\end{center}

     \vspace{1cm}
     
\centerline{\bf  \small Abstract   \footnote{The three authors are supported by ANR-23-CE40-000}}

 In this paper, we obtain   a local limit theorem for the Kemperman's model of  oscillating random walk on $\mathbb{Z}$; it extends the existing results for classical  random walks on $\mathbb Z$ or reflected random walks on $\mathbb N_0$. The key technical point   is to control the long-term behavior of the embedding subprocess that characterizes the oscillations of the original random walk between $\mathbb Z^-$ and $\mathbb Z^+$ in both recurrent and transient cases. Then by combining   an extension of \cite[Theorem 1.4]{gouezel}  for the convergence of aperiodic sequence of renewal   operators acting on a suitable   functional Banach space  and the   decomposition of the trajectories of the random walk, we obtain the exact asymptotic for the return probability under some mild assumptions on the increment moments.

  \vspace{0.5cm} 60B12, 60B10, 82B41, 
  
  \noindent Keywords and phrases:   local limit theorem, oscillating random walk, Doob's transform, aperiodic sequence of operators



	\section{Introduction}

 It is a general fact that  the heat kernel  on a Riemannian manifold $M$ is a smooth function on $(0, +\infty)\times M\times M, (t, x, y) \mapsto p(t, x, y)$
whose  shape   depends on the properties of $M$.  In  \cite{saloff}, L. Saloff-Coste pays particular attention to the long-time, large-scale behavior of the heat kernel and its relation to the global geometry of $M$. To answer to this general question, L. Saloff-Coste   needs tools to obtain sharp two-sided estimates for the heat kernel in terms of the time variable $t > 0$ and basic geometric quantities depending on $x, y \in M$.  He studies in particular  the case of manifolds made of ``nice pieces" and propose a strategy of ``reconstruction"  to describe the behavior of the heat kernel on the whole manifold; this strategy can be extended  to manifolds with finitely many ends.

Similar questions are relevant in the case of discrete structures $\mathbb X= \mathbb X_0\cup \mathbb X_1\cup \ldots \cup\mathbb X_\ell$   with finitely many  ends $\mathbb X_1, \ldots, \mathbb X_\ell,$ where $\mathbb X_0$ is finite and  $\ell \geq 2$. We  consider an irreducible Markov chain  $(X_n)_{n \geq 0}$ on $\mathbb X$    with transition probability $P=(p(x, y))_{x, y \in \mathbb X}$. The aim   here is to describe the asymptotic behavior as $n \to +\infty$ of  the   probabilities $p^{(n)}(x, y)$, assuming that we have precise estimates of the quantities $\mathbb P[X_n=y\mid X_0=x, X_1\ldots, X_n \in \mathbb X_i]$ for any $1\leq i\leq \ell$.

 The simplest example of a such   structure is the ``oscillating random walk"  on $\mathbb Z$ introduced by J. H. B.  Kemperman  in 1974  \cite{kemperman} and defined by
  \begin{equation} \label{qjvhbe}
  X_0  =x \quad \text{and} \quad
X _{n+1}=\left\{
\begin{array}{cll}
X_n+  \xi_{n+1}  &if \quad X_n \leq -1,  
\\
 X_n+  \eta_{n+1} &if \quad X_n = 0,  
\\
X_n + \xi'_{n+1}  &if \quad X_n \geq 1, 
\end{array}
\right.   
\quad \text{for} \quad n \geq 0,
 \end{equation} 
 where $x$ is a fixed point in $\mathbb Z$ and $(\xi_i)_{i \geq 1}, (\eta_i)_{i \geq 1}$ and 
 $(\xi'_i)_{i \geq 1}$ are independent iid sequences of $\mathbb Z$-valued random variables with  respective distributions $\mu, \mu_0$ and $\mu'$.  This Markov chain models discrete-time diffusions  in one-dimensional space with three different media $\mathbb Z^{*-}, \{0\}$ and $\mathbb{Z^{*+}}$. Whenever the process $\mathcal{X}=(X_n)_{n \geq 0}$ lies on the negative half-line $\mathbb Z^{*-}$, its trajectory is directed by the left distribution $\mu$ until it quits $\mathbb Z^{*-}$;  once it  visits $\{0\}$ or enters the positive half-line $\mathbb Z^{*+}$, its trajectory is directed respectively by $\mu_0$ and the right distribution $\mu'$ until quitting   these sets...  and so on. 
  When $\mu=\mu_0=\mu'$, the process $(X_n)_{n \geq 0}$ is a classical random walk on $\mathbb Z$  whose behavior is well known.

J. H. B.   Kemperman obtained  an explicit  general necessary and sufficient condition for the recurrence of the oscillating random walk. B. A. Rogozin and S. G. Foss obtained in \cite{rogozinfoss} sufficient 
 conditions for recurrence   if the distributions $\mu$ and $\mu'$ belong
to the region of attraction of stable laws.
Recently, J. Br\'emont proposed in \cite{bremont} a straight line and self-contained exposition of these results  and T. D. Vo \cite{vo1}  obtained a sufficient condition in terms of moment of $\xi_1$ and $\xi'_1$.   B. A. Rogozin and S. G. Foss also  gave examples
illustrating that the oscillating random walk can be  transient  even in the case when $\mathbb E[\vert \xi_1\vert]$ and  $\mathbb E[\vert \xi'_1\vert]  $  are both finite and $\mathbb E [ \xi_1 ]= \mathbb E[ \xi'_1 ] =0$,  in contrast to  the classical  random walk. To recover the recurrence property, T. D. Vo   simply reinforces the moment hypotheses and assume that $\mathbb E[\vert \xi_1\vert^{1+p}]+\mathbb E[\vert \xi'_1\vert^{1+q}]<+\infty$ with $p+q\geq 1$.  This stronger hypothesis is not necessary of course, as demonstrated by the classical random walk; nevertheless, as soon as $p+q<1$, there exist  examples of transient ``doubly centered" oscillating random walks  (see for instance \cite{peignewoess}).

Note that  among the oscillating random walks,   we find the unfolded version on $\mathbb Z$ of the reflected walk on $\mathbb N_0 $ which has been the subject of intense studies for a few decades \cite{peigneessifi} and is a basic and important model of stochastic dynamical systems \cite{peignewoess}. Recently, G. Alsmeyer, S. Brofferio and D. Buraczewski \cite{alsmeyerbrofferioburaczewski} studied one dimensional iterated functions systems generated by asymptotically linear maps with possibly different slopes  at  $-\infty$ and $+\infty$; these systems are closed to classical affine iterations, but the  analysis differs according to the sign and the size of the slopes at infinity. Their results present analogies with the present article with a competition between the two slopes at infinity.

 Let us recall briefly some known limit theorems for classical random walks $(S_n)_{n\geq 0}$  on $\mathbb Z$, reflected random walks $(R_n)_{n \geq 0}$ on $\mathbb N_0$ and oscillating random walks on $\mathbb Z$. First of all, in the spirit of the Donsker's invariance principle for classical random walks, functional limit theorems  have been obtained  for the reflected and oscillating   random walks. Under quite general assumptions, the process $(R_n)_{n \geq 0}$, properly rescaled, converges towards  the reflected Brownian motion on $\mathbb R^+$  \cite{ngopeigne} while the  oscillating random walk  $(X_n)_{n \geq 0}$ converges to a skew Brownian motion whose parameter can be expressed in terms of renewal functions associated with the random walks of step distributions $\mu$ and $\mu'$ \cite{peignevo}.
Concerning the local limit theorem for   (irreducible and aperiodic)   classical  or reflected random walks, the spatial inhomogeneity introduced by the   reflection  at $0$ in the second case means that the asymptotic behavior of the return probabilities is markedly different from   one case to  the other.

$\bullet$ {\it Classical random walks $(S_n)_{n\geq 0}$  on $\mathbb Z$.} 
 If $\mathbb E[\vert \xi_1\vert^2]<+\infty$ and $\mathbb E[\xi_1]=0$  then   there exists a strictly positive constant $c$ which does not depend on $x$ and $y$ such that $\displaystyle  \mathbb P_x[S_n=y]\sim c/\sqrt{n}$ as $n \to +\infty$  \cite{spitzer}. 
When  $\xi_1$ is not centered, assuming  that $\mathbb E[e^{t\xi_1}]<+\infty$ for $t$ sufficiently large, there exists a strictly positive constant $c= c(x-y)$ such that   $\displaystyle  \mathbb P_x[S_n=y]\sim c\rho^n/\sqrt{n}$ as $n \to +\infty$, where $\rho$ is the minimum of the Laplace transform $t \mapsto \mathbb E[e^{t\xi_1}]$. These results are based on Fourier analysis and a classical Doob's transform to bring   back to the centered case.

$\bullet$ {\it Reflected random walks $  (R_n)_{n\geq 0}$  on $\mathbb N_0$.}   The process $(R_n)_{n\geq 0}$
  is defined recursively by
$R_0= x \in \mathbb N_0$ and $R_{n+1}=\vert R_n-\xi_{n+1}\vert $ for any $n \geq 0$.  The local limit theorem for  the process  $(R_n)_{n\geq 0}$ has been studied  in  \cite{peigneessifi} under the hypothesis $\mathbb E[e^{t\xi_1}]<+\infty$ for $t$ sufficiently large.  If  $\mathbb E[\xi_1]=0$  then   there exists a strictly positive constant $c$ depending on $y$   such that $\displaystyle  \mathbb P_x[S_n=y]\sim c \rho^n/\sqrt{n}$ as $n \to +\infty$, where $\rho$ is the minimum of the Laplace transform $t \mapsto \mathbb E[e^{t\xi_1}]$. 
When  $\xi_1$ is not centered,  there exists $c= c(x,y)>0$  such that   $\displaystyle  \mathbb P_x[S_n=y]\sim c\rho^n/n^{3/2}$ as $n \to +\infty$.  Notice that in the centered case, the constant $c$ does not depend on the starting point $x$  while in the non centered case it depends on both $x$ and $y$, and not only on the difference $x-y$ as for classical random walks. These  results are based on Darboux's method, via the study of the singularity of certain functions of the complex variable, with operator values.
 
In the present paper, we focus on the asymptotic behavior of the quantity $\mathbb P_x[X_n=y]$  for oscillating random walks on $\mathbb Z$ with   three media   $\mathbb Z^{*-}, \{0\}$ and $\mathbb Z^{*+}$. Somewhat surprisingly, it is when the two classical  random walks have a drift of the same sign that there are quite complex interactions between them.  Even the modest perturbation of the transitions from a finite set of sites (for instance transitions from the origin) would reveal other possible interactions, making the analysis   more complex.  Our  approach is based on the analysis of the transition probability of the ``switching  subprocess" of the oscillating random walk, which controls the transitions from one medium  $\mathbb Z^{*-}, \{0\}$ or $\mathbb Z^{*+}$  to another one. This strategy  is quite flexible  and can be adapted to other processes, for instance  ``perturbed random walks with a membrane"  \cite{pilipenkopryhodko} which is a quite active area for one decade.  The same strategy is also valid in  the case of oscillating random walks on $\mathbb R$; when one of the distributions $\mu, \mu'$ or $\mu_0$ is spread out, the transition operator of  the switching subprocess  falls within the scope of quasi-compact operators.

\section{Description of the model and hypotheses} \label{statement}

Let $(\xi_n)_{n \geq 1}, (\eta_n)_{n \geq 1}  $ and $(\xi'_n)_{n \geq 1}$ be    independent sequences of  iid  $\mathbb Z$-valued  random variables, defined on a probability space $(\Omega, \mathcal F, \mathbb{P})$ and with respective distributions $\mu, \mu_0$ and $\mu'$. 
The {\it oscillating random walk}   $\mathcal{X} =(X  _n)_{n\geq 0}$ on $\mathbb Z$  with jump distributions $ \mu, \mu_0 $ and $\mu'$ is defined recursively by \eqref{qjvhbe}. 
To emphasize the dependence in $\mu, \mu_0$ and $\mu'$   of this  process, we write $\mathcal X (\mu, \mu_0,   \mu')$. 
 
 We set $D:= \sup\{x \geq 1\mid \mu(x)>0\}, D':= \inf \{x\leq  -1\mid \mu'(x)>0\}, D_0^+:= \sup\{x \geq 1\mid \mu_0(x)>0\}$, $D_0^-:= \inf \{x\leq -1\mid \mu_0(x)>0\}$   and introduce the following assumptions:

  $\bf O1- $ {\it $(\xi_n)_{n \geq 1}, (\eta_n)_{n \geq 1} $ and $(\xi'_n)_{n \geq 1}$ are independent sequences of  iid  $\mathbb{Z}$-valued random variables}.

 $\bf O2-$ {\it   Both distributions $\mu $ and $\mu'$  are strongly aperiodic on $\mathbb Z$. }

  ${\bf O3-}\  DD'\leq -2.$

Hypothesis {\bf O3} implies in particular  that the supports of $\mu$ and $\mu'$ are neither included in $\mathbb Z^-$ nor in $\mathbb Z^+$. This last condition is weaker since it is equivalent to $DD'\leq -1$; it  is also classical  (and as far as we know, it is indispensable) in the theory of random walks in order to obtain local limit theorems in the non centered  case.   In subsection \ref{toy}, we explore briefly the case $DD'=-1$. It is simpler   since the successive excursions within the different media are   independent in this case. The hypothesis ${\bf O3}$  breaks this independence and ensures that  the oscillating random walk  can  jump, at least in one direction,   from one half line to the other without visiting $0$.

${\bf O4}-  \mu_0(\mathbb Z^{*-})\mu_0 (\mathbb Z^{*+})>0 $ {\it (in particular $\mu_0(0) <1$}).

By \cite{vo1},  under   hypotheses {\bf O1}, {\bf O2} and {\bf O3}, the process $\mathcal X$ is irreducible on $\mathbb Z$.
We   explore here the asymptotic behavior  as $n \to +\infty$ of the quantity $\mathbb{P}_x[X_n=y]$,  under a varied set of conditions of moment  on   $\mu$ and $\mu'$. Up to exchanging the  roles  of $\mu$ and $\mu'$, we have seven cases   to explore, according to the fact that their means  are negative {\bf (N)}, positive  {\bf (P)}  or zero  {\bf (Z)}. 

 In  the  cases $({\bf P,N})$, $({\bf Z,Z})$ and $({\bf P, Z})$,  the oscillating random walk is recurrent,  while in the other cases, it is transient. We introduce some moment hypotheses.

For any $\mathbb Z$-valued random variable $\xi$, we set ${\xi}^- := \max(0, -\xi)$ and ${\xi}^+ := \max(0, \xi)$.
 
 \noindent {\bf Moment conditions}
 
 \vspace {2mm}
 
\noindent   \underline{{\it Case} $({\bf P,N})$}  {\it where} $\mathbb E[\xi_1] >0 $ {\it and}  $ \mathbb E[\xi'_1]<0, $

{\it with the following moment assumptions :  } $\mathbb E[\vert \xi_1\vert],  \mathbb E[\vert \eta_1\vert], \mathbb E[\vert \xi'_1\vert]<+\infty$.

\vspace {2mm}

\noindent \underline{{\it Case} $({\bf Z,Z})$}   {\it where} 
  $\mathbb E[\xi_1] =  \mathbb E[\xi'_1]=0, $
  
 {\it with the following moment assumptions :  there exists $\delta_0>0$ such that}
$$
\mathbb E[\vert \eta_1\vert ^{1+\delta_0}], \mathbb E[\xi_1^2], \mathbb E[{({\xi^+_1})}^{3+\delta_0}],   \mathbb E[{\xi'_1}^2], \mathbb E[{({\xi'_1}^-)}^{3+\delta_0}] < +\infty.
$$

\vspace {2mm}

\noindent  \underline{{\it Case} $({\bf P,Z})$}    {\it where}  $\mathbb E[\xi_1] >0 $ {\it and}  $ \mathbb E[\xi'_1]=0, $

 {\it with the following moment assumptions :  there exists $\delta_0>0$ such that} 
$$
 \mathbb E[\vert \eta_1\vert ^{1+\delta_0}], \mathbb E[\xi_1^2],  \mathbb E[{\xi'_1}^2],   \mathbb E[{(\xi_1^+)}^{2+\delta_0}],  \mathbb E[{({\xi'_1}^-)}^{3+\delta_0}] < +\infty.
$$
 
\noindent \underline{{\it Case} $\bf (Z, P)$}  {\it where} $\mathbb E[\xi_1] =0 $ {\it and}  $ \mathbb E[\xi_1']>0, $

  {\it with the following moment assumptions :  there exist  $\delta_0>0$   such that} 

 $\bullet \    \mathbb E[\vert \eta_1\vert^{1+\delta_0}], \mathbb E[\xi_1^2], \mathbb E[(\xi^+_1)^{3+\delta_0}]<+\infty, $

 $\bullet\  
  \mathbb E[({\xi'_1})^2], \mathbb E[ ({\xi'_1}^-)^{5/2+\delta_0}] <+\infty.$

 For the last two cases below, we assume that the random variables  $\xi_1$ and $\eta_1$ have the same distribution; we   explain why in section \ref{sectiontransientNPPP}. Hence only moment hypotheses  of $\xi_1$ and $\xi'_1$ are needed.

\noindent \underline{{\it Case} $\bf (N, P)$}  {\it where}  $\mathbb E[\xi_1] <0 $  {\it and}   $ \mathbb E[\xi'_1]>0 $

{\it with the following moment assumptions : there exist  $\delta_0, \lambda>0$ and $\lambda'<0$  such that}

 $\bullet \  \displaystyle   \mathbb E[ e^{(\lambda+\delta_0) \xi_1}],  \mathbb E[ e^{ (\lambda' -\delta_0)\xi'_1}]<+\infty \quad and   \quad \mathbb E[\xi_1 e^{\lambda \xi_1}] = \mathbb E[\xi'_1 e^{\lambda' \xi'_1}]=0, $

\noindent \underline{{\it Case} $\bf (P, P)$}  {\it where}   $\mathbb E[\xi_1] >0 $ {\it and}  $ \mathbb E[\xi'_1]>0 $

 {\it with the following moment assumptions :   there exist  $\delta_0>0$ and $ \lambda, \lambda' <0$ such that
}

$\bullet \  \displaystyle \mathbb E[ e^{  (\lambda -\delta_0)\xi_1}], \mathbb E[ e^{   (\lambda'-\delta_0) \xi'_1}]<+\infty \quad and   \quad \mathbb E[\xi_1 e^{\lambda \xi_1}] = \mathbb E[\xi'_1 e^{\lambda' \xi'_1}]=0. $

By \cite{vo1},  in the  case  $({\bf P, N)}$,  the   oscillating random walk $\mathcal X$ is   positive recurrent  and  the classical theory of denumerable Markov chains yields 
$$ \mathbb P_x[X_n=y] \longrightarrow \nu(y)  \quad \text{as} \quad n \to +\infty,$$  
where $ \nu $ is the unique invariant probability measure of $\mathcal{X}$.  In the cases $({\bf Z, Z)}$ and $({\bf P, Z)}$, the   oscillating random walk $\mathcal X$ is null recurrent     and its study requires more attention.    As demonstrated in \cite{vo1},  in these cases, the embedding  switching subprocess $\mathcal{X_{\bf C}}$ (to be introduced in section \ref{subprocess})  is positive recurrent on its unique essential class and its transition probability operator acts as a compact operator on a  particular functional space $\mathcal{B}$. This property is of utmost importance for applying the Gou\"ezel's renewal theorem (see section \ref{sectionrecurrent} for the details), which provides a precise asymptotic behavior of the  probabilities $\mathbb{P}_x[X_n = y]$ for any $x, y \in \mathbb{Z}$.  
Notice that
in these three cases, the spectral radius $\rho_{\mathcal X}:= \lim_{n \to +\infty} \mathbb P_x[X_n=y]^{1/n}$ equals 1. 

 In the last three cases $\bf(Z, P)$, $\bf (N, P)$ and $\bf ( P, P)$, the oscillating random walk $\mathcal X$ is transient.  In the case $\bf(Z, P)$, although  $\mu'$ is non  centered, only sufficiently large polynomial moments are required  for $\mu'$  to obtain a suitable overestimation of the return probabilities (see section \ref{sectiontransientZP}).
 In  the remaining  cases $\bf (N, P)$ and $\bf ( P, P)$, in order to get   a precise asymptotic  of $\mathbb P_x[X_n=y]$, exponential moments  for  $\mu$ and $\mu'$  are required. We then   assume that  $\mathbb E[e^{t\xi_1}]$ and $\mathbb E[e^{t\xi'_1}]$ are finite for $\vert t\vert$ large enough and we   consider  the respective Laplace transforms $L$ and $L'$  of $\mu$ and $\mu'$ defined formally by
$$
  L(t):= \mathbb E[e^{t\xi_1}] \quad {\rm and}  \quad  L'(t):= \mathbb E[e^{t\xi'_1}].
 $$  These Laplace transforms are  well-defined and analytic on an open neighborhood  $\mathcal O$ of $0$; furthermore, under hypothesis {\bf O3}, these two functions are unbounded on each side of $0$ and   they thus reach their  minima  on this open set. We denote by $\lambda$ and $\lambda'$ the  values  of the parameter $t$ given by    
$$
   \lambda:= \underset{t \in \mathcal O}{\text{arg min }} L(t) \quad {\rm and} \quad   \lambda':= \underset{t \in \mathcal O}{\text{arg min }} L'(t) 
 $$
and we  set  $\rho:=L(\lambda)$ and $\rho':=L'(\lambda')$. 
  If $\mathcal O$ is large enough, the  derivatives of  $L$ and $L'$ vanish  respectively at $\lambda$ and $\lambda'$. Notice that $\rho, \rho' \in ]0, 1]$ and $\rho=1$ (resp. $\rho'=1$) if and only if $\mathbb E[\xi_1]=0$ (resp. $\mathbb E[\xi'_1]=0$).  In the cases $\bf (N, P)$ and $\bf ( P, P)$,   parameters $\rho, \rho'$ and also $\rho_\mathcal X$  are strictly less than 1 (see Theorem \ref{theo2} below).
 
These values of $\lambda$ and $\lambda'$ are useful to control  the behavior of both classical random walks $(\xi_1+\cdots +\xi_n)_{n \geq 1}$ and $(\xi'_1+\cdots +\xi'_n)_{n \geq 1}$. However, for the oscillating walk, the two distributions are strongly intertwined   and give rise in some cases to another interesting value of parameter $t$ defined by using the two functions $L$ and $L'$ considered simultaneously (see cases {\bf B1}, {\bf B4} and {\bf B7} below). More precisely, when $\lambda, \lambda' <0$  (or $\lambda, \lambda'>0$), the   graphs of $L$ and $L'$ may intersect  between $\lambda$ and $\lambda'$ at some point $ \lambda_\star$; in this case, the real  $ \rho_\star:=    L(\lambda_\star)= L'(\lambda_\star)$ 
is strictly greater than $\max (\rho, \rho')$ and ``control''   the behavior   of $\mathbb P_x[X_n=y]$ (see below the statement of Theorem \ref{theo2}).

 Note that, up to an exchange of $\mu$ and $\mu'$ or  a  symmetry with respect to the origin,  the cases $({\bf P, Z})$ and $({\bf Z, N})$ (similarly  $({\bf Z, P})$ and $({\bf N, Z})$ or  $({\bf P, P})$ and $({\bf N, N})$)    are analyzed in  an analogous  way. Therefore the cases $({\bf Z, N}), ({\bf N, Z})$ and  $({\bf N, N})$ are not considered in the present paper.   We also introduce some notations to  lighten statements. 

 \noindent {\bf Notations}
{\it For any two non negative real functions $f$ and $g$, we write
\begin{itemize}
    \item $f \preceq g$ \quad  if there is a strictly positive constant $c$  such that  $f(x) \leq c \  g(x)$.
    \item $ f(x) \sim g(x) \text{ as } x \to +\infty$ \quad if $\displaystyle \lim_{x \to +\infty} \dfrac{f(x)}{g(x)}=1.$
\end{itemize}
} 

 Here are our main statements. 
 
 \begin{theo}\label{theo1}   \underline{ Case where $\rho_\mathcal X =1$} 

Assume hypotheses {\bf O1 -- O4}   and   that   the above moment assumptions hold,  according to each situation.   Then the following statement are true

   $\bullet$ \underline{Positive recurrent case ${\bf (P, N)}$} : the Markov chain $(X_n)_{n \geq 0}$ is irreducible and aperiodic and      for any $x, y \in \mathbb Z$,
\[
 \lim_{n \to +\infty} \mathbb P_x[X_n=y] = \nu(y),
 \]
 where $\nu$ is the unique  invariant  probability measure of $\mathcal X$.
  
  $\bullet$ \underline{Null recurrent cases ${\bf (Z, Z)}$  and   ${\bf (P, Z)}$} :  for any $y \in \mathbb Z$, there exists  a strictly positive constant $C_y $    such that for any $x \in \mathbb Z$,
\[
  \mathbb P_x[X_n=y] \sim \dfrac{C_y}{ \sqrt{n}} \quad as \ \ n \to +\infty.
\]
 More precisely, if $\lambda_{\mathcal X}$ denotes the unique (up to a multiplicative constant) invariant measure for $\mathcal X$, the quantities $\displaystyle \lambda_{\tiny\mathcal X} (-\infty):= \lim_{y \to -\infty}\lambda_{\mathcal X} (y)$ and $\displaystyle \lambda_{\mathcal X}  (+\infty):= \lim_{y \to +\infty}\lambda_{\mathcal X} (y)$ do exist and 
\begin{equation} \label{jghqvelt}
C_y= \left\{\begin{array}{ll}
 \lambda_{\mathcal X}(y)\slash\sqrt{\frac{\pi}{2}}(\sigma \lambda_{\mathcal X} (-\infty)+\sigma' \lambda_{\mathcal X} (+\infty)) & \text{in the case {\bf (Z, Z)}}
 \\
 \lambda_{\mathcal X}(y)\slash\sqrt{\frac{\pi}{2}} \sigma' \lambda_{\mathcal X} (+\infty) & \text{in the case {\bf (Z, P)}}
\end{array}
\right. .
\end{equation}
$\bullet$ \underline{Transient case  ${\bf (Z, P)}$} :   for any $x, y \in \mathbb Z$, there exists a strictly positive constant $C_{x, y}$ such that  
$$  \mathbb P_x[X_n=y] \sim  \dfrac{C_{x, y}}{n^{3/2}} \quad as \ \ n \to +\infty.$$   
\end{theo}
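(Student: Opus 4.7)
The argument splits into three regimes but relies throughout on the same decomposition of trajectories along the switching subprocess $\mathcal X_{\bf C}$ introduced in Section \ref{subprocess}. In the positive recurrent case $({\bf P,N})$ there is essentially nothing to do beyond classical Markov chain theory: irreducibility is proved in \cite{vo1}, aperiodicity of $\mathcal X$ is obtained from the strong aperiodicity of $\mu$ and $\mu'$ (hypothesis ${\bf O2}$) combined with the crossing condition $DD'\le-2$ of ${\bf O3}$, and positive recurrence follows from a standard Foster--Lyapunov argument with test function $x\mapsto |x|$ using the drifts $\mathbb E[\xi_1]>0$, $\mathbb E[\xi'_1]<0$. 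The ergodic theorem for aperiodic positive recurrent countable Markov chains then yields $\mathbb P_x[X_n=y]\to\nu(y)$.

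For the null recurrent cases $({\bf Z,Z})$ and $({\bf P,Z})$, I would split each trajectory $X_0,\ldots,X_n$ into the switching epochs $0=T_0<T_1<\cdots$ at which $\mathcal X$ crosses between media (together with the positions $X_{T_i}$ recorded by $\mathcal X_{\bf C}$) and the excursions between consecutive switches, each distributed as a classical killed random walk on $\mathbb Z^{*-}$ or $\mathbb Z^{*+}$. At the level of generating functions this turns $\sum_n z^n\mathbb P_x[X_n=y]$ into a renewal series built from a perturbed operator $\hat P_{\bf C}(z)=\sum_n z^n Q_n$, where $Q_n$ is the sub-stochastic kernel encoding a one-step move of $\mathcal X_{\bf C}$ with elapsed time $n$. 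Since $P_{\bf C}=\hat P_{\bf C}(1)$ is positive recurrent on its essential class and acts quasi-compactly on the functional Banach space $\mathcal B$ (both facts recalled from \cite{vo1}), the aperiodic extension of Gou\"ezel's local renewal theorem \cite[Theorem 1.4]{gouezel} applies and converts the $n^{-3/2}$ tail of the first-switching-time distribution of $\mathcal X_{\bf C}$ (coming from classical fluctuation theory for the centered walks of step $\mu$ or $\mu'$) into the asymptotic $C_y/\sqrt n$. The constant $C_y$ is then identified by pairing the dominant eigenvectors of $P_{\bf C}$ against the invariant measure $\lambda_{\mathcal X}$ of $\mathcal X$: the boundary values $\lambda_{\mathcal X}(\pm\infty)$ encode the mass assigned to each half-line, and the factors $\sigma,\sigma'$ come from the CLT applied to individual excursion lengths. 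In case $({\bf P,Z})$ only the right-hand half-line contributes, since on the left the positive drift $\mathbb E[\xi_1]>0$ makes the excursion return times have finite expectation and therefore negligible tail.

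For the transient case $({\bf Z,P})$, the same decomposition applies, but now $\mathcal X_{\bf C}$ is transient because the positive drift $\mathbb E[\xi'_1]>0$ gives right excursions of infinite expected length; consequently the Green function of $\mathcal X_{\bf C}$ at $(x,y)$ is finite. The dominant contribution to $\mathbb P_x[X_n=y]$ comes from trajectories whose last visit to the medium containing $y$ begins at an epoch of order $n$; this final excursion is a centered random walk on $\mathbb Z^{*-}$ whose return time to $\mathbb Z^{*+}\cup\{0\}$ has tail $\sim c/n^{3/2}$ by classical Feller--Spitzer fluctuation theory. Convolving this tail with the finite Green kernel of $\mathcal X_{\bf C}$ and extracting the Tauberian asymptotic from the generating function at $z=1$ yields the rate $C_{x,y}/n^{3/2}$, with $C_{x,y}$ a strictly positive product of hitting-probability and exit-density factors; strict positivity follows from the irreducibility of $\mathcal X$ on $\mathbb Z$.

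The principal obstacles I anticipate are (a) the spectral analysis of $P_{\bf C}$ on $\mathcal B$ --- quasi-compactness, identification of the dominant eigendata, and quantitative control of the subdominant spectrum --- which is exactly what forces the $\delta_0$-moment hypotheses listed in each case; (b) verifying the aperiodicity of the full sequence $(Q_n)$, not merely of $P_{\bf C}$, as demanded by the aperiodic-operator version of Gou\"ezel's theorem, which has to be read off from the strong aperiodicity of $\mu,\mu'$ together with ${\bf O3}$ through the glueing at $0$; and (c) in the $({\bf Z,P})$ case, controlling a tail moment of the form $\sum_n n^{3/2}\|Q_n\|_{\mathcal B}$, whose finiteness is what demands the hypothesis $\mathbb E[({\xi'_1}^-)^{5/2+\delta_0}]<\infty$ rather than a mere second moment.
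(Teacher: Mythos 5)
Your treatment of the positive recurrent case and of the null recurrent cases $({\bf Z,Z})$, $({\bf P,Z})$ follows the paper's route: decomposition along the switching subprocess, compactness of $Q$ on $\mathcal B_\Psi$, Gou\"ezel's operator renewal theorem with $\beta=1/2$ to get $T_n\sim {\bf c}^{-1}\Pi_Q/\sqrt n$, a final convolution with the excursion functions ${\bf V}_{n,y}$ (the paper's Lemma \ref{qjksbh}), and identification of $C_y$ through $\lambda_{\mathcal X}$ and the balayage formula. Apart from a loose phrase (the \emph{tail} $\mathbb P[C_1>n]$ decays like $n^{-1/2}$; it is the point mass $\mathbb P[C_1=n]$ and the norm $\Vert Q_n\Vert_{\mathcal B_\Psi}$ that are $O(n^{-3/2})$), this part is a faithful plan of the argument actually carried out.

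The transient case $({\bf Z,P})$ is where your plan has a genuine gap. Knowing that the Green kernel $\sum_{\ell}\sum_k Q_k^{(\ell)}(x,z)$ is finite tells you nothing about how this mass is distributed over the time variable $k$, and it is precisely the local behaviour in $k$ of $\sum_\ell Q_k^{(\ell)}(x,z)$ near $k=n$ that produces one of the two contributions of order $n^{-3/2}$ in the decomposition \eqref{decompositiontraject}: the limit in Lemma \ref{zb} is $\mathbf P(g)+P(\mathbf G)$, where the second term comes from trajectories whose last switch occurs at time $n-O(1)$ and requires a local limit theorem $n^{3/2}Q_n^{(\ell)}(x,z)\to\mathcal E_\ell(x,z)$ for every $\ell$, together with a bound $\Vert Q_n^{(\ell)}\Vert\preceq \ell^2 n^{-3/2}$ uniform enough to sum over $\ell$. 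This is the content of Theorem \ref{theogouezel+}, which the paper proves as an extension of \cite{gouezel}, and it is applied after a Doob $H$-transform normalizing the submarkovian operator $Q$ by its spectral radius $\rho_\Psi<1$ and its positive eigenfunction $H$ (Proposition \ref{QZP}), so that the sum over $\ell$ converges geometrically. Your proposal replaces all of this by ``extracting the Tauberian asymptotic from the generating function at $z=1$''; but no Tauberian theorem yields an $n^{-3/2}$ local asymptotic for a nonmonotone coefficient sequence from the mere finiteness of the generating function at $z=1$ --- one would need either Darboux-type singularity analysis (the route of \cite{peigneessifi}, which this paper deliberately avoids) or the direct operator estimates above. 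As stated, the step from the finite Green kernel to the rate $C_{x,y}/n^{3/2}$ does not go through.

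Two smaller inaccuracies in the same case: the transience of $\mathcal X_{\bf C}$ comes from $\mathbb P_x[C_1=+\infty]>0$ for $x\geq 1$ (positive escape probability), not from the excursions having ``infinite expected length''; and your anticipated obstacle $\sum_n n^{3/2}\Vert Q_n\Vert_{\mathcal B}<\infty$ is not what is needed (that series diverges, since $\Vert Q_n\Vert_{\mathcal B_\Psi}\asymp n^{-3/2}$ from the left medium); the moment hypothesis $\mathbb E[({\xi'_1}^-)^{5/2+\delta_0}]<\infty$ is used in Lemma \ref{jhzcabgjehf} to make the right-medium contribution to $Q_n$ decay strictly faster than $n^{-3/2}$, so that it disappears from the limit operator $^H\mathcal E$.
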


 Notice that, in the case of the classical centered random walk on $\mathbb Z$, formula  \eqref{jghqvelt} is simplified to $C_y=  1/\sigma \sqrt{2\pi}$ as expected since $\lambda_{\mathcal X}$ is the counting measure on $\mathbb Z$

 In the   remaining transient cases $\bf (N, P)$ and $\bf ( P, P)$, the spectral radius  $\rho_\mathcal X  $ is strictly less than 1; its value depends  not only on $\rho$ and $\rho'$, which are strictly less than 1 in  both cases, but also in a quite complicated way on the relative position of the  graphs of $L, L'$ and  the Laplace transform of $\eta_1$. In the present paper, we restrict our attention to  the simpler case where the random walk oscillates between $\mathbb Z^{-}$ and $\mathbb Z^{*+}$. This relaxation purposely simplifies our analysis since only the graphs of $L$ and $L'$ are considered.   The case  where $\mu_0= \mu'$ can be treated similarly.

\begin{theo}\label{theo2}  \underline{ Case where $\rho_\mathcal X <1$} 

Assume hypotheses {\bf O1 -- O4}   and   that   the  moment assumptions hold,  according to the corresponding case.   Then,  for any $x, y \in \mathbb Z$, there exists a strictly positive constant  $C_{x, y} $ such that  $$  \mathbb P_x[X_n=y] \sim C_{x, y} \ p_n$$ where
 
\hspace{1cm} $\bullet$  Case     ${\bf (N, P)}$:   
   \qquad $\displaystyle 
p_n= \dfrac{\max(\rho, \rho')^n}{n^{3/2}}; 
  $ 

\vspace{3mm}

 \hspace{1cm} $\bullet$  Case    ${\bf (P, P)}$: 
$ \quad  
  p_n \in \left\{\dfrac{\max(\rho, \rho')^n}{\sqrt{n}},\ \dfrac{\max(\rho, \rho') ^n}{n^{3/2}},\ \rho_\star^n\right\}$

 \noindent with  $  \rho_\star= L(\lambda_\star)=L'(\lambda_\star)\in (\max(\rho, \rho'), 1) $ where $\lambda_\star$ is the unique solution   of the equation $L(t)=L'(t)$  in the interval $(\lambda, \lambda')$. 

 More precisely, it holds

 {\bf Case  A} :   $\lambda = \lambda'$ \qquad \qquad
\begin{tabular}{|c|c|c|}
\hline
\ & {\small Condition}   &  {\small   $p_n$ }
 \\
      \hline \hline 
  {\bf A1} & $\rho=\rho'$  &   {\small $   \  \rho ^n/ \sqrt{n}  $}
 \\
      \hline \hline
      {\bf A2} & $ \rho\neq  \rho'$
&     {\small $     \  \max(\rho, {\rho'})^n /  n^{3/2} $}
\\
   \hline
   \end{tabular} 

       {\bf Case  B} :   $\lambda < \lambda'$
\qquad \qquad 
   \begin{tabular}{|c|c|c|}
   \hline
\ & {\small Condition}   &  {\small   $p_n$ }
 \\
\hline \hline 
{\bf B1}  & $\rho=\rho'$ &  {\small $    \rho_\star ^n   $}
 \\
  \hline \hline
  {\bf B2} &$\rho<\rho'  \  \& \ L(\lambda')< \rho'$   &{\small $  {  \rho'} ^n /n^{3/2}   $}
            \\
      \hline
  {\bf B3} &$\rho<\rho'  \  \& \ L(\lambda')=\rho'$   &{\small $     \  {\rho'} ^n /  \sqrt{n}    $}
  \\       
            \hline
  {\bf B4} &$\rho<\rho'  \  \& \ L(\lambda')> \rho'$   &{\small $        \rho_\star^ n   $}
            \\
   \hline \hline
       {\bf B5} &$\rho>\rho'  \  \& \ L'(\lambda)< \rho$   &{\small $   \rho  ^n /n^{3/2}   $}
            \\
      \hline
  {\bf B6} &$\rho>\rho'  \  \& \ L'(\lambda)=\rho$   &{\small $      \  \rho ^n /  \sqrt{n}    $}
  \\       
            \hline
  {\bf B7} &$\rho>\rho'  \  \& \ L'(\lambda)> \rho$   &{\small $       {  \rho_\star^n}   $}
            \\
   \hline
   \end{tabular}


   {\bf Case  C} :   $\lambda  > \lambda'$
\qquad \qquad 
   \begin{tabular}{|c|c|c|}
   \hline
\ & {\small Condition}   &  {\small   $p_n$ }
 \\
\hline \hline 
{\bf C}  &  $\rho=\rho'$\ {\small or} \ $\rho\neq\rho'$ &       $\max(\rho, \rho') ^n  /n^{3/2} $ 
 \\
  \hline 
    \end{tabular}
\end{theo}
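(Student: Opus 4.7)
The plan is to combine three ingredients: a decomposition of the trajectories of $\mathcal{X}$ along the switching subprocess $\mathcal{X}_{\mathbf{C}}$ introduced in Section \ref{subprocess}, exponential tilts (Esscher transforms) at the rates $\lambda,\lambda'$ — and where relevant at their common value $\lambda_\star$ — to absorb the subexponential factors $\rho^n$, $(\rho')^n$, and $\rho_\star^n$, and finally the extension of Gou\"ezel's renewal theorem described in Section \ref{sectionrecurrent}.

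I would first express the generating series $G_{x,y}(z):=\sum_{n\ge 0}z^n\mathbb{P}_x[X_n=y]$ as a matrix-geometric sum indexed by the number $k$ of sign changes of $\mathcal{X}$: a trajectory from $x$ to $y$ in $n$ steps is the concatenation of an initial excursion in the half-line containing $x$, alternating interior excursions, and a final excursion ending at $y$. This yields an operator identity of the form $G_{x,y}(z)=U_x(z)\bigl(I-\Pi(z)\bigr)^{-1}V_y(z)$, where the switching kernel $\Pi(z)(u,u')$ counts, with weight $z^{\mathrm{length}}$, the classical $\mu$- or $\mu'$-excursions that leave a point $u$ of one half-line and first hit the opposite half-line at $u'$. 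Next, I would perform the Esscher transform: on the $\mu$-excursions, multiply every trajectory of length $\ell$ by $\rho^{-\ell}e^{-\lambda\Delta}$ (where $\Delta$ is the total displacement) so that $\mu$ becomes centered, and symmetrically for $\mu'$; the boundary weights $e^{\lambda u}$ and $e^{\lambda' u'}$ survive at the switching points and turn $\Pi(z)$ into a tilted operator acting on a weighted Banach space analogous to the space $\mathcal{B}$ of Section \ref{sectionrecurrent}. Under the exponential moment hypotheses, this operator is compact for $z$ in a suitable disk and depends analytically on $z$.

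The asymptotic of $\mathbb{P}_x[X_n=y]$ is then read off from the dominant singularity of $(I-\Pi(z))^{-1}$ via the extended Gou\"ezel theorem. Two kinds of singularity compete: branch points at $z=1/\rho$ and $z=1/\rho'$ arising from the first-passage generating series of the centered tilted walks (contributing $1/\sqrt{n}$ or $1/n^{3/2}$ polynomial corrections, as in the classical local limit theorem for half-line hitting times), and genuine Perron poles of $\Pi(z)$ that appear only when a common tilt exists, namely when the graphs of $L$ and $L'$ meet in the admissible interval between $\lambda$ and $\lambda'$. For case $\mathbf{(N,P)}$ one has $\lambda'<0<\lambda$, no common tilt is possible, only the branch points matter, and one obtains $\max(\rho,\rho')^n/n^{3/2}$. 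For case $\mathbf{(P,P)}$, the trichotomy $\mathbf{A}$/$\mathbf{B}$/$\mathbf{C}$ is dictated by the order of $\lambda$ and $\lambda'$, and the finer subcases of $\mathbf{B}$ by comparing $L(\lambda')$ with $\rho'$ and $L'(\lambda)$ with $\rho$: these comparisons decide whether the Perron pole at $z=1/\rho_\star$ lies strictly before, at, or after the branch points, producing respectively the $\rho_\star^n$, $\max(\rho,\rho')^n/\sqrt n$, and $\max(\rho,\rho')^n/n^{3/2}$ regimes listed in the table.

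The hardest step will be verifying, uniformly across all subcases, the hypotheses of Gou\"ezel's theorem for the tilted switching operator: compactness on the weighted Banach space, analyticity and transversal crossing of the Perron eigenvalue through $1$ at the critical $z$, and the aperiodicity of the tilted switching subprocess. The exponential moment hypotheses of Theorem \ref{theo2} are precisely what ensure the requisite decay of excursion tails and the analytic continuation of $\Pi(z)$ past the first singularity. The most delicate subcases are $\mathbf{B3}$ and $\mathbf{B6}$, where the Perron pole coincides with a branch point of the resolvent and one must combine both contributions carefully to recover the $1/\sqrt{n}$ correction in front of $\max(\rho,\rho')^n$; in the same spirit, subcase $\mathbf{A1}$ requires a separate argument since the two branch points collide at the same location with identical leading order.
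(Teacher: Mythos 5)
Your proposal is correct in outline and rests on the same three pillars as the paper: the decomposition of trajectories along the switching subprocess (your identity $G_{x,y}(z)=U_x(z)(I-\Pi(z))^{-1}V_y(z)$ is exactly the generating function of the paper's decomposition \eqref{decompositiontraject}), the Esscher tilts absorbing $\rho^n$, $(\rho')^n$ or $\rho_\star^n$, and Theorem \ref{theogouezel+} on the weighted space $\mathcal B_\Psi$. Where you diverge is in the final step: you propose a singularity analysis of the resolvent $(I-\Pi(z))^{-1}$, pitting branch points at $1/\rho$, $1/\rho'$ against a Perron pole at $1/\rho_\star$, whereas the paper never analyses singularities. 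Instead it chooses the tilt parameter $t\in\{\lambda,\lambda',\lambda_\star\}$ case by case so that the tilted oscillating walk $\mathcal X(\mu^t,{\mu'}^t)$ becomes \emph{exactly} one of the types already treated: positive recurrent $\mathbf{(P,N)}$ when $t=\lambda_\star$ (giving the pure exponential $\rho_\star^n$ in $\mathbf{B1}$, $\mathbf{B4}$, $\mathbf{B7}$), null recurrent $\mathbf{(Z,Z)}$, $\mathbf{(P,Z)}$ or $\mathbf{(Z,N)}$ when the tilted drift on one side vanishes (giving $1/\sqrt n$ in $\mathbf{A1}$, $\mathbf{B3}$, $\mathbf{B6}$ directly from Theorem \ref{theo1}), and a submarkovian situation handled by the Doob $H$-transform of Section \ref{sectiontransientZP} otherwise (giving $1/n^{3/2}$). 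This reduction inverts your assessment of difficulty: the subcases $\mathbf{A1}$, $\mathbf{B3}$, $\mathbf{B6}$ that you flag as most delicate — confluent pole and branch point — are precisely the ones that become immediate applications of the null recurrent part of Theorem \ref{theo1} after the right tilt, and no confluent singularity analysis is ever needed; the genuine work in the paper lies instead in verifying the hypotheses of Theorem \ref{theogouezel+} for the submarkovian operators $\tilde Q$ and in the extraction of the ratio $r^n=(\min/\max)^n$ on one block of the tilted switching kernel. Your route would presumably also succeed, but at the cost of a genuinely harder local analysis near the coinciding singularities, which is exactly what the paper's case-by-case choice of $t$ is designed to avoid.
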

The paper is organized as follows. In section \ref{sectionordinaryrandomwalks}, we recall general results on oscillations of classical  random walks on $\mathbb Z$.  
 In section \ref{subprocess}, we introduce and study  the ``switching subprocess" of $\mathcal X$, which controls the transitions between two different media. Then, we state a general renewal theorem  due to S. Gou\"ezel,  concerning Markov walks on $\mathbb N$ with    non iid  increments; we also propose an extension of  his statement  which is useful to explore the transient cases.
Sections \ref{sectionrecurrent} and  \ref{sectiontransientZP} are devoted to the proof    of Theorem \ref{theo1}.  In section \ref{sectiontransientNPPP},  we focus on Theorem  \ref{theo2},     restricting ourselves to  the case with two media, ie  the distribution $\mu_0$ coincides with $\mu$. This restriction  stems from  the fact that in this section, the intertwining  of  two media gives rise to a number of subcases to be explored,  while a three-media  picture remains unclear. 


\section{On classical  random walks, their ladder times and  their fluctuations} \label{sectionordinaryrandomwalks}
 We denote by $S=(S_n)_{n \geq 0}$  and $S'= (S'_n)_{n \geq 0}$ the random walks defined by  $S_0=S'_0=0$ and 
 $S_n= \xi_1 + \ldots+  \xi_n, S'_n= \xi'_1 + \ldots+  \xi'_n $ for $n \geq 1$.

 Let  $ \tau^S_{+} := \inf\{k \geq 1  \mid  S_k \geq 0\}, \tau^S_{*+} = \inf\{k \geq 1  \mid  S_k > 0\}, \tau^S_{-} := \inf\{k \geq 1  \mid  S_k \leq 0\}$ and  $ \tau^S_{*-} = \inf\{k \geq 1  \mid  S_k < 0\}   $  be respectively the first {\it weakly ascending}, {\it strictly ascending}, {\it weakly descending} and  {\it strictly descending}  times  of $S$ 
(with the convention $\inf \emptyset = +\infty$).

Similarly, we defined the    ascending and descending times   $ \tau^{S'}_{+},  \tau^{S'}_{*+},  \tau^{S'}_{-} $ and $ \tau^{S'}_{*-}   $ associated with  the random walk $S'$. All these random variables are of interest to describe the oscillations of  $S$ and $S'$.

 \subsection{The centered case :  \texorpdfstring{$ \mathbb E[  \xi_1 ]=  0$}{E}}
 In this case,   it holds that  
 $\displaystyle \mathbb{P}[\liminf_{n \to +\infty}S_n=-\infty]= \mathbb{P}[\limsup_{n \to +\infty} S_n=+\infty] =1$;  hence,  the random variables $\tau^S_{+}, \tau^S_{*+}, \tau^S_{-}$  and $\tau^S_{-}$ are  $\mathbb{P}$-as  finite.  We denote respectively by 
  $\mu_{+}, \mu_{*+}, \mu_-$ and $\mu_{*-}$ the  respective distributions of  $S_{\tau_+}, S_{\tau_{*+}},  S_{\tau_-}$  and $S_ {\tau_{*-}}$   and  by  $ U_{+}, U_{*+}, U_{-}$ and  $U_{*-}$ the  corresponding potential:
\[
 U_+:= \displaystyle \sum_{n \geq 0} (\mu_+)^{\star n}, \quad  U_{*+}:= \displaystyle \sum_{n \geq 0} (\mu_{*+})^{\star n}, \quad U_-:= \displaystyle \sum_{n \geq 0} (\mu_-)^{\star n}  \quad {\rm and} \quad  U_{*-}:= \displaystyle \sum_{n \geq 0} (\mu_{*-})^{\star n}.
\] 
We also denote by $V_+, V_{*+}, V_- $  and $V_{*-}$ the  {\it weakly/strictly ascending/descending renewal functions}   of $S$  which vanish on $\mathbb Z^-$ and 
 defined   for any $x \geq 1$ by
\[
V_+(x)=  U_+
[0, x[, \quad V_{*+}(x)=  U_{*+}[0, x[, \quad V_-(x)=  U_-]-x, 0]\quad {\rm and}   \quad V_{*-}(x)=  U_{*-}]-x, 0].
\]
%
Similarly, we define renewal functions  $V'_+, V'_{*+}, V'_-$ and $ V'_{*-}$ associated with $S'$.
All these functions are increasing and   less than $C \vert x\vert$ for some strictly positive constant  $C$.    They appear crucially in the quantitative estimates of the fluctuations of $S$  and  $S'$.   
 
 The random walk $S$ corresponds to the excursions of the oscillating random walk $\mathcal X$ in $\mathbb Z^{*-}$; it is thus  natural to consider  the following stopping  times $\tau^S(x)$ with  $x \leq -1$ defined by 
$$
\tau^S(x):= \inf\{n \geq 1 \mid x+ S_n \geq 0 \}.
$$ 
 In the following lemma, we summarize classical statements  about fluctuations of random walks.  We refer to   \cite[Theorem 2, Corollary 3, Proposition 12]{doney2012}  for more details.  We emphasize on the correspondence between  $V_{*+}$ and $V_-$ in the following statements.

\begin{lemma}\label{MAJ+asym}      Assume $\sigma^2:= \mathbb E[\xi_1^2]<+\infty, \mathbb E[\xi_1]=0$ and that $\mu$ is strongly aperiodic.  

\noindent Set $   c= \frac{1}{\sigma \sqrt{2\pi}} \displaystyle \sum_{w \geq 1}V_-(w) \mu[w, +\infty).$
Then, for any $x \leq -1$, it holds that  
\begin{enumerate}[a)]
\item $\mathbb{P}[\tau^{S}(x)>n] \preceq\, \frac{1+ \vert x\vert }{\sqrt{n}}$   and  $\mathbb{P}[\tau^{S}(x)>n] \thicksim 2c \,\frac{V_{*+}(\vert x\vert )}{\sqrt{n}};$
\item   for any $y \leq -1, \quad \mathbb{P}[\tau^{S}( x)>n, x+S_n=y] \preceq\,  \frac{ (1+\vert x\vert)   (1+\vert y\vert )}{n^{3/2}}
$   and 

\hspace{3cm} $\mathbb{P}[\tau^{S}(x)>n, x+S_n=y] \thicksim\frac{1}{\sigma\sqrt{2\pi}}\, \frac{V_{*+}(\vert x\vert ) \,V_-(\vert y\vert )}{n^{3/2}}$;  
    \item $\mathbb{P}[\tau^{S}(x)=n] \preceq  \frac{ 1+\vert x\vert }{n^{3/2}}$   and 
$\quad \mathbb{P}[\tau^{S}(x)=n] \thicksim c\, \frac{V_{*+}(\vert x\vert )}{n^{3/2}}$;\  \ 
    \item for any $y \geq 0,  \quad \mathbb{P}[\tau^{S}( x)=n,  x+S_n =y] \preceq \frac{(1+\vert x\vert )}{n^{3/2}}\sum_{z> y} z\mu(z)$  and 
    
    \hspace{3cm}  $\mathbb{P}[\tau^{S}(x)=n,  x+S_n =y] \thicksim   
 \frac{1}{\sigma \sqrt{2\pi}} \frac{V_{*+}(\vert x\vert)}{n^{3/2}}\displaystyle\sum_{w \geq 1} V_-(w) \mu( w+y)$.
\end{enumerate}
\end{lemma}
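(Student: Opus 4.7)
The plan is to reduce every part of the lemma to classical fluctuation theorems for centered, strongly aperiodic random walks with finite variance on $\mathbb Z$, so that each assertion matches an item in \cite[Thm. 2, Cor. 3, Prop. 12]{doney2012}. The starting observation is the rewriting
\[
\tau^S(x) = \inf\{n \geq 1 : S_n \geq |x|\},
\]
which identifies $\tau^S(x)$ with the first passage time of $S$ above the positive level $|x|$; all four items then become statements about this first passage.

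For (a), the event $\{\tau^S(x)>n\}$ equals the persistence event $\{\max_{1\leq k\leq n}S_k<|x|\}$, for which the Iglehart--Kozlov asymptotic is $\sim 2c\,V_{*+}(|x|)/\sqrt n$; the multiplicative constant is identified through the Wiener--Hopf relation that expresses $c$ in terms of the weakly descending renewal function and the tails of $\mu$. The upper bound uses $V_{*+}(|x|)\leq C(1+|x|)$, a consequence of the elementary renewal theorem together with $\sigma^2<+\infty$. For (b), one reverses time at step $n$ and recognizes the reversed trajectory as a walk conditioned to stay strictly positive and ending at height $|y|$; the Caravenna--Doney conditional local limit theorem then yields the factor $V_{*+}(|x|)V_-(|y|)/\sigma\sqrt{2\pi}\,n^{3/2}$, with a matching uniform upper bound extracted from the same argument.

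Parts (c) and (d) follow by a one-step decomposition. For (d), conditioning on the position at time $n-1$ gives, for $y\geq 0$,
\[
\mathbb{P}[\tau^S(x)=n,\ x+S_n=y]=\sum_{w\geq 1}\mathbb{P}[\tau^S(x)>n-1,\ x+S_{n-1}=-w]\,\mu(w+y);
\]
substituting the asymptotic from (b) with $|y|$ replaced by $w$, and using the uniform bound to justify Fubini, produces the announced formula $\frac{V_{*+}(|x|)}{\sigma\sqrt{2\pi}\,n^{3/2}}\sum_{w\geq 1}V_-(w)\mu(w+y)$. Summing (d) over $y\geq 0$ and noting $\sum_{y\geq 0}\mu(w+y)=\mu[w,+\infty)$ recovers (c) with the stated constant $c$; the corresponding upper bounds follow from the uniform version of (b) and a similar computation with $(1+w)$ in place of $V_-(w)$. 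The only subtle point is the bookkeeping between the strictly ascending $V_{*+}$ and the weakly descending $V_-$: under strong aperiodicity of $\mu$, the Wiener--Hopf factorization fixes this combination unambiguously, so no correction at $0$ is needed and Doney's normalizations match the constants stated here.
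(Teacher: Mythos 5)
Your proposal follows essentially the same route as the paper: the lemma is a compendium of classical fluctuation results, and the paper itself simply refers to \cite[Theorem 2, Corollary 3, Proposition 12]{doney2012} for (a)--(b) and identifies the constant $c$ via the Wiener--Hopf factorization, exactly as you do; your derivation of (c) and (d) from (b) by conditioning on the position at time $n-1$ and summing over $y$ is the standard bookkeeping and is correct, including the domination needed for Fubini.

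The one place where you are too quick is the uniform upper bound in (b). Doney's Corollary 3 gives $\mathbb{P}[\tau^S(x)>n,\,x+S_n=y]\preceq (1+|x|)(1+|y|)/n^{3/2}$ only in the regime $|x|,|y|\preceq\sqrt n$, and the asymptotic relation plus $V_{*+}(|x|)\le C(1+|x|)$ does not upgrade this to a bound uniform in $(x,y,n)$, since the asymptotic holds for fixed $x,y$ as $n\to+\infty$. The paper devotes a separate remark to this point: one splits $[0,n]$ into thirds, bounds the first block by $\mathbb{P}[\tau^S(x)>\lfloor n/3\rfloor]\preceq(1+|x|)/\sqrt n$, the (time-reversed) last block by $(1+|y|)/\sqrt n$, and the middle block by $\sup_z\mathbb{P}[S_{\lfloor 2n/3\rfloor-\lfloor n/3\rfloor}=z]\preceq 1/\sqrt n$ via the discrete local limit theorem, using independence of the three blocks. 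Your phrase ``a matching uniform upper bound extracted from the same argument'' should be replaced by this explicit three-block decomposition (or a reference to Denisov--Wachtel), and the same caveat applies to the uniform bound in (a), which needs the classical estimate $\mathbb{P}[\max_{1\le k\le n}S_k<t]\preceq(1+t)/\sqrt n$ uniformly in $t$ rather than the pointwise asymptotic. With that repaired, the proof is complete and matches the paper's.
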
  
\begin{remark}  Notice that, by a classical direct argument based on Wiener-Hopf factorization \cite[Theorem II.6]{lepagepeigne}, it holds
\begin{equation} \label{kjzghv}
2c=\frac{1}{\sqrt{\pi}} \exp\left(\sum_{n=1}^{+\infty} \frac{1}{n}(\mathbb P[S_n\leq 0]-1/2)\right)=   \frac{\sigma}{\sqrt{2\pi} \mathbb E[S_{\tau^S_-}]}
\end{equation}
where the last inequality follows from \cite[Chapter XVIII.5]{feller}. 
\end{remark} 
\begin{remark} Notice that  the  inequality   $\mathbb{P}[\tau^{S}( x)>n, x+S_n=y] \preceq\,  \frac{ (1+\vert x\vert)   (1+\vert y\vert )}{n^{3/2}}$ appears in \cite[Corollary 3]{doney2012} under the restrictive assumption $\vert x\vert, \vert y\vert  \preceq \sqrt{n}$ ;  it is valid in fact without this condition. The proof is developed in \cite{dw2015} and   based on the decomposition of the event $[\tau^{S}( x)>n, x+S_n=y] $ into 3 parts. Indeed, from an easy observation, it follows  that
\begin{align*}
    [\tau^S(x)>n, \ x+S_n=y] &\subset \underbrace{[\tau^S(x)>\lfloor n/3\rfloor] }_{=: A_n} 
  \cap \underbrace{[S_{\lfloor 2n/3\rfloor}-S_{\lfloor n/3\rfloor}=y-x -S_{\lfloor n/3\rfloor}-(S_n-S_{\lfloor 2n/3\rfloor})}_{=:B_n}\\
       & \qquad \qquad  \qquad \qquad   \cap \underbrace{[y-\xi_n\leq 0, \ldots, y- (\xi_n+\cdots \xi_{\lfloor 2n/3\rfloor+1})\leq  0]}_{=: C_n}.
\end{align*}
Let $\mathfrak{A}_n$  and $\mathfrak C_n$ be the independent $\sigma$-algebras generated respectively by  the random variables   $\xi_1, \ldots, \xi_{\lfloor n/3\rfloor}$ and $\xi_{\lfloor 2n/3\rfloor+1}, \ldots, \xi_{n}$.    
From the optimal form of the discrete local limit theorem on $\mathbb R$ (see \cite[Theorem 1]{Rosen}), we obtain
\begin{align*} 
    \mathbb P[B_n \mid \mathfrak{A}_{n}\vee \mathfrak C_n] \leq  \sup_{z \in \mathbb Z} \mathbb P[S_{\lfloor 2n/3\rfloor-\lfloor n/3\rfloor}=z]\preceq \dfrac{1}{\sqrt{n}}.
\end{align*}
Hence,  $\displaystyle \mathbb P [\tau^S(x)>n, \ x+S_n=y]  \preceq \dfrac{\mathbb  P[A_n \cap B_n]}{\sqrt{n}}=  \dfrac{\mathbb  P[A_n] \mathbb P[B_n]}{\sqrt{n}}\preceq \dfrac{(1+x)(1+y)}{n^{3/2}}.$ 
  \end{remark}
We have similar results for the random variable   $\tau^{S'}(x):= \inf\{n \geq 1 \mid x+ S'_n \leq 0 \}$ when $x \geq 1$ :
\begin{align} \label{jvzhgqf}
 \mathbb{P}[\tau^{S'}(x)>n]  &\preceq  \frac{1+ x}{\sqrt{n}} \ {\rm and} \ \mathbb{P}[\tau^{S'}(x)>n] \thicksim 2c' \,\frac{V'_{*-}(x)}{\sqrt{n}}  
 \\
 & \hspace{3.5cm} {\rm with }\   c'= \frac{1}{ \sigma' \sqrt{2\pi}} \sum_{w \leq -1}V'_+(\vert w\vert) \mu'(-\infty, w], \notag\\
\mathbb{P}[\tau^{S'}(x)=n] &\preceq  \frac{1+x}{n^{3/2}}\ {\rm and}\  \mathbb{P}[\tau^{S'}(x)=n] \thicksim c'\, \frac{V'_{*-}(x)}{n^{3/2}},
 \notag
\end{align}
and  for any  $x \geq 1$ and $y \leq 0$,
 \begin{align}\label{functiong}
 \mathbb{P}[\tau^{S'}(x)=n, x&+S'_n=y]  \preceq\,  \frac{(1+x)}{n^{3/2}}\sum_{z <y} \vert z\vert \mu'(z)  \notag
\\
& {\rm and }  \ 
\mathbb{P}[\tau^{S'}(x)=n, x+  S'_n  =y] \thicksim \frac{V'_{*-}(x)}{\sigma' \sqrt{2\pi} n^{3/2}}\sum_{w \leq -1} V'_+(\vert w\vert) \mu'(  w+y).
\end{align}
 
 Similarly, as above,  by \cite[Theorem II.6]{lepagepeigne} and \cite[Chapter XVIII.5]{feller}, it holds
\begin{equation} \label{jhevgt}
2c'=\frac{1}{\sqrt{\pi}} \exp\left(\sum_{n=1}^{+\infty} \frac{1}{n}(\mathbb P[S'_n\geq 0]-1/2)\right)= \frac{\sigma'}{\sqrt{2\pi} \mathbb E[S'_{\tau^{S'}_+}]}.
\end{equation}

\subsection{The non centered case}

In order to control   the transitions  between two different media in cases $( \bf Z, P), (\bf N, P)$ and $(\bf P, P)$, we also need upper bounds of the quantities $    \mathbb P[\tau^S(x)>n] $ and    $ \mathbb P[\tau^S(x)>n, \ x+S_n=y]$ when $\mathbb E[\xi_1]>0$ and the corresponding quantities for $S' $ when $\mathbb E[\xi'_1]<0$.  

When  $\mathbb E[\xi_1 ] $ is strictly positive,  the  random variables  $\tau_+^S$    has finite expectation,  and even moment of order 2 as soon as $\mathbb E[(\xi_1^-)^2]<+\infty$, see \cite[Theorem 3.1]{gut}. The same  property  holds for the variables  $\tau^S(x)$ for $ x \leq -1$. We did  not find in the literature a control of the probability $ \mathbb P[\tau^S(x)>n]$ in the case where $\mathbb E[\xi_1 ] $ is strictly positive; this is the aim of the next lemma under a  polynomial moment assumption.

     \begin{lemma}\label{estimate drift}
         Assume that $\mathbb E[\xi_1^2]<+\infty$ and $\mathbb E[\xi_1] >0$. Then, there exists a strictly positive constant $C$ such that for any $x \leq  -1$, 
    \begin{equation}\label{momenttau}
         \mathbb E[(\tau^{S}(x) )^2]\leq C (1+ \vert x\vert )^2.
       \end{equation}
         Hence, for any $\varepsilon  \in [0, 1]$  there exists  a strictly positive constant $C_\varepsilon $ such that  for any $x\leq -1$ and  $n \geq 1$, 
         \begin{align}\label{tauS>n}
   \mathbb P[\tau^S(x)>n] \leq  C_\varepsilon \dfrac{ (1+\vert x\vert )^{1+\varepsilon}} {n^{1+\varepsilon}} 
   \end{align}
and 
         \begin{align}\label{tsejvhtb}
    \mathbb P[\tau^S(x)=n, \ x+S_n=y]   \leq  C_\varepsilon \dfrac{ (1+\vert x\vert)^{1+\varepsilon}}{n^{{\frac{3}{2}}+\varepsilon}}\ \mu(y, +\infty)\quad {\rm when} \quad  y \geq 0. 
    \end{align} 
     \end{lemma}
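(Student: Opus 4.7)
The estimate \eqref{momenttau} is a direct application of Gut's theorem \cite[Theorem 3.1]{gut}: since $\mathbb E[\xi_1]>0$ and $\mathbb E[(\xi_1^-)^2]\leq\mathbb E[\xi_1^2]<+\infty$, the first passage time $\tau^S(x)$ admits a second moment uniformly controlled by a multiple of $(1+|x|)^2$ as $x$ ranges over $\mathbb Z^{*-}$. The tail bound \eqref{tauS>n} then follows by interpolation between first and second moment: Jensen's inequality applied with the convex function $t\mapsto t^{2/(1+\varepsilon)}$ (admissible because $(1+\varepsilon)/2\leq 1$ for $\varepsilon\in[0,1]$) gives $\mathbb E[(\tau^S(x))^{1+\varepsilon}]\leq C^{(1+\varepsilon)/2}(1+|x|)^{1+\varepsilon}$, and Markov's inequality closes the argument.

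The non-trivial part is \eqref{tsejvhtb}. The plan is to first decompose on the terminal step: on $\{\tau^S(x)=n,\,x+S_n=y\}$ with $y\geq 0$, necessarily $x+S_{n-1}=z$ for some $z\leq -1$ and $\xi_n=y-z\geq y+1$, whence
\[
\mathbb P[\tau^S(x)=n,\,x+S_n=y]=\sum_{z\leq -1}\mathbb P[\tau^S(x)>n-1,\,x+S_{n-1}=z]\,\mu(y-z).
\]
Since $\sum_{z\leq -1}\mu(y-z)=\mu(y,+\infty)$, it suffices to establish a bound
\[
\mathbb P[\tau^S(x)>n-1,\,x+S_{n-1}=z]\ \preceq\ \frac{(1+|x|)^{1+\varepsilon}}{n^{3/2+\varepsilon}}
\]
which is \emph{uniform} in $z\leq -1$; summing term by term against $\mu(y-z)$ then produces the factor $\mu(y,+\infty)$ appearing in \eqref{tsejvhtb}.

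To reach this uniform estimate, I would reuse the three-part splitting described in the Remark following Lemma~\ref{MAJ+asym}. Fix $m:=\lfloor (n-1)/3\rfloor$ and set $\mathfrak A_n:=\sigma(\xi_1,\ldots,\xi_m)$, $\mathfrak C_n:=\sigma(\xi_{2m+1},\ldots,\xi_{n-1})$; these two $\sigma$-algebras are independent, and jointly independent of $\xi_{m+1},\ldots,\xi_{2m}$. The event $\{\tau^S(x)>n-1,\,x+S_{n-1}=z\}$ is then contained in $A_n\cap B_n\cap C_n$, where $A_n=\{\tau^S(x)>m\}\in\mathfrak A_n$ captures the survival during the first third, $B_n=\{S_{2m}-S_m=v\}$ pins down $x+S_{n-1}=z$ with $v$ a $(\mathfrak A_n\vee\mathfrak C_n)$-measurable constant, and $C_n\in\mathfrak C_n$ rewrites the remaining survival $x+S_k\leq -1$ for $2m\leq k\leq n-2$ as the condition $S_{n-1}-S_k\geq z+1$ (the identity $x+S_k=z-(S_{n-1}-S_k)$ being valid once $x+S_{n-1}=z$ is imposed). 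Conditioning on $\mathfrak A_n\vee\mathfrak C_n$, the optimal discrete local limit theorem yields $\mathbb P[B_n\mid\mathfrak A_n\vee\mathfrak C_n]\leq\sup_{v\in\mathbb Z}\mathbb P[S_m=v]\preceq 1/\sqrt n$; combining this with the independence of $A_n$ and $C_n$, the bound \eqref{tauS>n} applied to $\mathbb P[A_n]\preceq (1+|x|)^{1+\varepsilon}/n^{1+\varepsilon}$, and the trivial inequality $\mathbb P[C_n]\leq 1$, delivers precisely the desired order $(1+|x|)^{1+\varepsilon}/n^{3/2+\varepsilon}$.

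The main obstacle is the careful bookkeeping of the $\mathfrak C_n$-measurability of $C_n$ and of the inclusion $\{\tau^S(x)>n-1,\,x+S_{n-1}=z\}\subset A_n\cap B_n\cap C_n$, both of which rest on the backward reading $x+S_k=z-(S_{n-1}-S_k)$ used on $B_n$. In contrast to the centered case treated in the Remark, one does \emph{not} need a matching lower bound of order one on $\mathbb P[C_n]$: here the positive drift of $S$ makes the time-reversed walk negatively drifted, so $C_n$ is in fact very likely and the trivial upper bound $\mathbb P[C_n]\leq 1$ is amply sufficient for the upper-estimate purpose.
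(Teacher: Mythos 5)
Your treatment of \eqref{tauS>n} and \eqref{tsejvhtb} is correct and essentially the paper's own argument: the tail bound comes from Markov plus Jensen exactly as in the paper, and for \eqref{tsejvhtb} the paper likewise reduces, via the last-step decomposition $\mathbb P[\tau^S(x)=n,\,x+S_n=y]=\sum_{z\leq -1}\mathbb P[\tau^S(x)>n-1,\,x+S_{n-1}=z]\,\mu(\vert z\vert+y)$, to a bound on $\mathbb P[\tau^S(x)>n,\,x+S_n=z]$ uniform in $z\leq -1$, obtained by conditioning a pinned increment on the past and invoking the optimal local limit bound of \cite{Rosen}. The only difference is cosmetic: the paper splits the trajectory at $\lfloor n/2\rfloor$ into two blocks (survival up to $\lfloor n/2\rfloor$, then the local limit estimate for $S_n-S_{\lfloor n/2\rfloor}$), whereas you import the three-block splitting of the centered-case Remark after Lemma~\ref{MAJ+asym} and then throw away the third block via $\mathbb P[C_n]\leq 1$ — as you note yourself, the backward-reading block buys nothing for an upper bound, so your argument is the paper's two-block proof with superfluous bookkeeping.

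The one genuine soft spot is \eqref{momenttau}. Calling it ``a direct application'' of \cite[Theorem 3.1]{gut} overstates what that result gives: as it is used elsewhere in the paper, it yields finiteness of $\mathbb E[(\tau^S_{*+})^2]$ (hence of $\mathbb E[(\tau^S(x))^2]$ for each fixed $x$) under $\mathbb E[(\xi_1^-)^2]<+\infty$, but not the uniform quantitative growth $\mathbb E[(\tau^S(x))^2]\leq C(1+\vert x\vert)^2$, which is precisely the content the paper proves by hand through the strictly ascending ladder epochs $\tau_0=0$, $\tau_{k+1}=\inf\{n>\tau_k\mid S_n>S_{\tau_k}\}$. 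You should either invoke explicitly the first-passage moment asymptotics $\mathbb E[\nu(t)^r]\sim (t/\mu)^r$ (a different statement from the finiteness theorem) or supply the short argument: since the ladder heights are integer-valued and strictly positive, $S_{\tau_k}\geq k$, hence $\tau^S(x)\leq \tau_{\vert x\vert}$, and $\tau_{\vert x\vert}$ is a sum of $\vert x\vert$ iid copies of $\tau^S_{*+}$, so Minkowski's inequality gives $\mathbb E[(\tau^S(x))^2]\leq \vert x\vert^2\,\mathbb E[(\tau^S_{*+})^2]\preceq (1+\vert x\vert)^2$, with the finiteness of $\mathbb E[(\tau^S_{*+})^2]$ being the place where Gut's theorem legitimately enters. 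With that repair your proposal is complete.
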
         
\begin{proof} \underline{Let us first prove inequalities \eqref{momenttau} and \eqref{tauS>n}}. Since $\mathbb E[\xi_1]$ is strictly positive, we may  consider  the sequence $(\tau_k)_{k \geq 0}$ of strictly ascending times of $S$ defined inductively by :
\[
\tau_0=0\quad {\rm and} \quad  \tau_{k+1}= \inf \{n > \tau_k\mid S_n>S_{\tau_k}\}.
\]
The sequence $(\tau_{k+1}-\tau_{k})_{k \geq 0}$ is iid.
 We observe that, for any $x \leq -1$,
\begin{align} \label{tau_expectation}
        \mathbb E[(\tau^S(x))^2] &\leq   2  \sum_{n \geq  1} n\ \mathbb P[\tau^S(x)\geq n] \notag\\
        &\leq  2  \sum_{n \geq 1} n  \sum_{k=0}^{\vert x\vert} \mathbb P[\tau_k \leq n < \tau_{k+1}, S_n \leq \vert x\vert] 
       \quad {\rm since} \    S_{\tau_k} \geq k,\ \notag \\
         &\leq  2 \sum_{k=0}^{\vert x\vert} \sum_{n \geq k} n\ \mathbb P[\tau_k \leq n < \tau_{k+1}] \quad {\rm since} \ \tau_k \geq k.
    \end{align}
The inner sum of the right side term in \eqref{tau_expectation} may be controlled as  follows.
    \begin{align*}
         \sum_{n \geq k} n \ \mathbb P[\tau_k \leq n < \tau_{k+1}] &=  \sum_{n \geq k} n  \sum_{\ell \geq 1} \mathbb P[\tau_{k+1}-\tau_k=\ell, \tau_k \leq n <  \tau_k+\ell] \\
        &=  \sum_{n \geq k} n \sum_{\ell \geq 1} \mathbb P[\tau_{k+1}-\tau_k=\ell]\mathbb P[n-\ell+1  \leq \tau_k \leq n] \\
        &=  \sum_{\ell \geq 1} \mathbb P[\tau^S_{*+}=\ell]  \underbrace{\sum_{n \geq k}  n\sum_{i=n-\ell+1 }^n \mathbb P[\tau_k=i]}_{\leq \displaystyle  \sum_{i \geq k-\ell+1}  \mathbb P[\tau_k=i] \sum_{n= i}^{i+\ell-1} n}  \   \\
        & \leq \sum_{\ell \geq 1} \mathbb P[\tau^S_{*+}=\ell]  \sum_{i\geq 1 } (i\ell +\ell^2)\mathbb P[\tau_k=i]
        \\ 
        & =\mathbb E[\tau^S_{*+}] \mathbb E[\tau_k]+ \mathbb E[(\tau^S_{*+})^2] \  \leq (k+1)\mathbb E[(\tau^S_{*+})^2] 
    \end{align*} 
 Hence  $ \mathbb E[(\tau^S(x))^2]    \preceq  (1+\vert x\vert)^2 $. 
        The upper bound \eqref{tauS>n} follows by  using the Markov and Jensen inequalities: for any $\varepsilon \in [0, 1]$,

    \begin{align*}
      \mathbb P[\tau^S(x)>n] &\leq \dfrac{\mathbb E[(\tau^S(x))^{1+\varepsilon}]} {n^{1+\varepsilon}}
       \leq 
        \dfrac{\mathbb E[(\tau^S(x))^2]^{\frac{1+\varepsilon}{2}}}{n^{1+\varepsilon}}
        \leq
      C_\varepsilon \dfrac{(1+\vert x\vert)^{1+\varepsilon}}{n^{1+\varepsilon}}.
    \end{align*}
    
\noindent \underline{Proof of inequality \eqref{tsejvhtb}}. We first control the quantity 
$\mathbb P[\tau^S(x)>n, \ x+S_n=z]$ for any $z\leq -1$  by  decomposing the event $[\tau^S(x)>n, \ x+S_n=z]$ as
\begin{align*}
    [\tau^S(x)>n, \ x+S_n=z] \subset \underbrace{[\tau^S(x)>\lfloor  n/2\rfloor]}_{E_{n, x}} \cap [S_n-S_{\lfloor  n/2\rfloor}=z-x -S_{\lfloor  n/2\rfloor}].
\end{align*}
Let $\mathfrak{G}_{\lfloor  n/2\rfloor}$ be the $\sigma$-algebra generated by the random variables $\xi_1, \ldots, \xi_{\lfloor  n/2\rfloor}$. Obviously, the event $E_{n, x}$ and the random variable $S_{\lfloor  n/2\rfloor}$ are both $\mathfrak{G}_{\lfloor  n/2\rfloor}$-measurable, whereas $S_n-S_{\lfloor  n/2\rfloor}$ is independent of $\mathfrak{G}_{\lfloor  n/2\rfloor}$.
 It is straightforward from  \cite[Theorem 1]{Rosen} that 
\begin{align} \label{local limit theorem on R}
    \mathbb P[S_n-S_{\lfloor  n/2\rfloor}=z-x-S_{\lfloor  n/2\rfloor} \mid \mathfrak{G}_{\lfloor  n/2\rfloor}] \leq  \sup_{w \in \mathbb Z} \mathbb P[S_{n-\lfloor  n/2\rfloor}=w]\preceq \dfrac{1}{\sqrt{n}}.
\end{align}
Hence, by \eqref{tauS>n} 
 \begin{align*}
 \mathbb P[\tau^S(x)>n, \ x+S_n=z] &\leq \mathbb P[E_{n, x}] \ \sup_{w \in \mathbb Z} \mathbb P[S_{n-\lfloor  n/2\rfloor}=w]\leq  C_\epsilon  \dfrac{(1+\vert x\vert)^{1+\varepsilon}}{ n^{{3\over 2}+\varepsilon}}
 \end{align*}
 with $C_\varepsilon >0$.
 Consequently, for $y \geq 0$, one gets
 \begin{align*}
     \mathbb P[\tau^S(x)=n, \ x+S_n=y]   &= \sum_{z\leq -1}  \mathbb P[\tau^S(x)>n-1, \ x+S_{n-1}=z]\
     \mu(\vert z\vert +y)\\
     &\leq  C_\epsilon  \dfrac{(1+\vert x\vert)^{1+\varepsilon}}{ n^{{3\over 2}+\varepsilon}}\ \mu(y, +\infty) 
 \end{align*}
 which completes the proof of  \eqref{tsejvhtb}.
 \end{proof} 
In order to get a  precise asymptotic  of $\mathbb P[\tau^S(x)=n, \ x+S_n=y]  $ as $n \to +\infty$, we require   the existence of sufficiently large exponential moments for $\xi_1$. This relies on  a classical argument in the theory of random walks, by   returning to the centered case with a change of measure. As explained in section \ref{statement}, under hypotheses {\bf O1}, {\bf O2} and {\bf O3}, the Laplace transforms $L$ and $L'$ of $\mu$ and $\mu'$ are strictly convex on their respective  domain  of definition and  reach their minimum  at  $\lambda$ and $\lambda'$ respectively; consequently,     the probability  measures  $ {\mu}^{\lambda} ({\rm d}x):= e^{\lambda x} \mu( {\rm d}x)/ L(\lambda)$   and  $ {\mu'}^{\lambda} ({\rm d}x):= e^{\lambda' x} \mu'( {\rm d}x)/ L'(\lambda')$   are centered on $\mathbb Z$. 

As a consequence, if $ \overset{\circ}{S_{\ }} $ denotes the centered random walk associated with $S$ and with increment distribution $ { \mu}^{\lambda}$, then the distribution of $  {\overset{\circ}{S_n}}$ for $n \geq 1$ is $ ({ \mu}^{\lambda}\,)^{\star n}  ({\rm d}x)= e^{\lambda x}  \mu\,^{\star n} ( {\rm d}x)/ \rho\,^n$. More precisely,    for any  function $\psi: \mathbb Z^n \to \mathbb R^+$,   it holds that  
$$
\mathbb E[\psi(S_1, \ldots, S_n)]= \rho\,^n   \mathbb E[\psi(\overset{\circ}{S_1}, \ldots, \overset{\circ}{S_n})e^{-\lambda\overset{\circ}{S_n}}].  
$$

Combining  \cite[Theorem 2.1]{iglehart} and   Lemma \ref{MAJ+asym}, we obtain the following lemma (with  a similar statement   for the random walk $S'$ when $\mathbb E[\xi'_1]\neq 0$).
\begin{lemma} \label{relativisation}Assume that $\mathbb E[\xi_1]\neq 0$ and that 
  there exists $\lambda\neq 0$ such that 
 $$
  \mathbb E[ e^{\lambda \xi_1}] <+\infty, \   \  \mathbb E[\xi_1 e^{\lambda \xi_1}]=0 \   and  \  \ 
 \mathbb E[ \xi_1^2 e^{\lambda  \xi_1}] <+\infty.  
  $$
  Then,   $\rho :=  \mathbb E[ e^{\lambda \xi_1}]<1$ and  the random walk ${\overset{\circ}{S_{\ }} }$ with distribution increment $\mu^{\lambda}( {\rm d}x)= e^{\lambda x} \mu( {\rm d}x)/ \rho$ is centered. Let $ \overset{\circ}{\sigma}   := \sqrt{\mathbb E[\xi_1^2e^{\lambda \xi_1}]/\rho}$ be the standard deviation of  ${\overset{\circ}{S_{\ }} }$ and  denote by $ \overset{\circ}{V}_{*+}$ (resp. $  \overset{\circ}{V}_-$)   its strictly ascending (resp. weakly descending)  renewal  function. 
  
Furthermore, for any $x \leq -1$ and $y \geq 0$,
\begin{align*}
 \mathbb{P}[\tau^{S}(x)=n, x+S_n=y] 
 &=  \rho\,^n \ e^{\lambda(x-y)} \mathbb P[\tau^{\overset{\, \circ}{S}}(x)= n, x+\overset{\circ}{S_n}= y]
 \\
 &
 \preceq\,  (1+\vert x\vert) e^{\lambda x} \left(e^{-\lambda y} \sum_{z>y} z e^{\lambda z}\mu (z)\right) \frac{ \rho^{n }}{n^{3/2}}
\end{align*}
and 
\begin{equation}\label{valeurrelativisation}
\mathbb{P}[\tau^{S}(x)=n,  x+S_n= y] \thicksim\frac{1}{\overset{\circ}{\sigma} \sqrt{2\pi}}\, \overset{\circ}{V}_{*+}(\vert x\vert) e^{\lambda x} \left(\sum_{w \geq 1}\overset{\circ}{V}_-(w)e^{\lambda w} \mu (w+y)\right)\frac{\rho^{n-1}}{n^{3/2}}. 
 \end{equation}
 \end{lemma}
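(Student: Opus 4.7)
The plan is to prove everything by exponential tilting (Cram\'er's change of measure), reducing the non-centered problem to the centered one of Lemma~\ref{MAJ+asym}. First I would justify the preliminary claims. Strict convexity of $L$ (guaranteed by $\mathbb E[\xi_1^2 e^{\lambda\xi_1}]<+\infty$ and the assumption that $\mu$ is not a Dirac mass) together with $L(0)=1$ and the fact that $\lambda\neq 0$ is the critical point forces $\rho=L(\lambda)<1$. The measure $\mu^\lambda$ has total mass $1$ by construction and its mean equals $\mathbb E[\xi_1 e^{\lambda\xi_1}]/\rho=0$, so the walk $\overset{\circ}{S}$ is centered with variance $\overset{\circ}{\sigma}^2=\mathbb E[\xi_1^2 e^{\lambda\xi_1}]/\rho<+\infty$; since $\mu^\lambda$ has the same support as $\mu$, the strong aperiodicity of $\mu$ is inherited by $\mu^\lambda$, so $\overset{\circ}{S}$ satisfies all the hypotheses of Lemma~\ref{MAJ+asym}.

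Next I would establish the pointwise identity. For any $n\geq 1$ and any non-negative function $\psi$ on $\mathbb Z^n$, a direct computation over the $n$-fold product distribution yields
\[
\mathbb E[\psi(S_1,\ldots,S_n)]=\rho^{n}\,\mathbb E\bigl[\psi(\overset{\circ}{S_1},\ldots,\overset{\circ}{S_n})e^{-\lambda\overset{\circ}{S_n}}\bigr],
\]
since the Radon--Nikodym density of $(\mu^\lambda)^{\otimes n}$ with respect to $\mu^{\otimes n}$ at $(x_1,\ldots,x_n)$ is $\rho^{-n}e^{\lambda(x_1+\cdots+x_n)}$. Applying this to $\psi(s_1,\ldots,s_n):=\mathbf{1}_{\{x+s_i<0,\,i<n\}}\mathbf{1}_{\{s_n=y-x\}}$, the event $[\tau^S(x)=n,x+S_n=y]$ is precisely $\{\psi(S_1,\ldots,S_n)=1\}$, and on this event $e^{-\lambda S_n}=e^{-\lambda(y-x)}=e^{\lambda(x-y)}$ is deterministic. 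Pulling this factor out gives the announced equality
\[
\mathbb P[\tau^{S}(x)=n,\,x+S_n=y]=\rho^{n}\,e^{\lambda(x-y)}\,\mathbb P[\tau^{\overset{\circ}{S}}(x)=n,\,x+\overset{\circ}{S_n}=y].
\]

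To finish, I apply part (d) of Lemma~\ref{MAJ+asym} to $\overset{\circ}{S}$. The asymptotic
\[
\mathbb P[\tau^{\overset{\circ}{S}}(x)=n,\,x+\overset{\circ}{S_n}=y]\sim\frac{1}{\overset{\circ}{\sigma}\sqrt{2\pi}}\,\frac{\overset{\circ}{V}_{*+}(|x|)}{n^{3/2}}\sum_{w\geq 1}\overset{\circ}{V}_-(w)\,\mu^\lambda(w+y)
\]
is transported back by multiplying by $\rho^{n}e^{\lambda(x-y)}$; replacing $\mu^\lambda(w+y)$ by $e^{\lambda(w+y)}\mu(w+y)/\rho$ and collecting the exponential factors ($e^{-\lambda y}\cdot e^{\lambda(w+y)}=e^{\lambda w}$, and the extra $1/\rho$ turns $\rho^{n}$ into $\rho^{n-1}$) gives \eqref{valeurrelativisation}. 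The upper bound is obtained in the same way from the upper bound in Lemma~\ref{MAJ+asym}(d): the sum $\sum_{z>y}z\,\mu^\lambda(z)=\rho^{-1}\sum_{z>y}ze^{\lambda z}\mu(z)$ converges because $\mathbb E[\xi_1^2 e^{\lambda\xi_1}]<+\infty$, and the multiplicative constants $\rho^{-1}$ are absorbed into the symbol $\preceq$.

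There is no serious obstacle here: the argument is purely a bookkeeping exercise around the tilting identity. The only point worth checking carefully is the transfer of strong aperiodicity and of the finiteness of the moments needed to invoke Lemma~\ref{MAJ+asym}(d) for $\overset{\circ}{S}$, both of which follow immediately from the hypotheses on $\xi_1$ and $\lambda$.
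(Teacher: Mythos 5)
Your proof is correct and follows essentially the same route as the paper: the paper sets up exactly this tilting identity $\mathbb E[\psi(S_1,\ldots,S_n)]=\rho^n\,\mathbb E[\psi(\overset{\circ}{S_1},\ldots,\overset{\circ}{S_n})e^{-\lambda\overset{\circ}{S_n}}]$ in the paragraph preceding the lemma and then obtains the statement by combining it with Lemma~\ref{MAJ+asym}(d) applied to the centered tilted walk. Your version simply writes out the bookkeeping (including the transfer of aperiodicity and moment hypotheses) that the paper leaves implicit.
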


We  end  this section with an oversestimation of the quantity $\mathbb P[\tau^{S'}(x) =n, x+S'_n=y]$ with $x\geq  1$ and $y\leq  0$   when $\mathbb E[\xi'_1] >0$, it is useful in the analysis of the case $(\bf Z, P)$. When  $\mathbb E[e^{t \xi'_1}]<+\infty$ for some $t<0$, we may consider     the random walk 
$({S'_n}^{t})_{n \geq 0}$ with jump distribution 
${\mu'}^{ t}({\rm d}x)= e^{ tx}\mu'({\rm d}x)/L'(t)$ and write 
\begin{align*}
\mathbb P[\tau^{S'}(x) =n, x+S'_n=y]=
 L'(t)^n e^{t (x-y)} \mathbb P[\tau^{S'}(x)=n, x+{S'_n}^{t}=y]\leq  L'(t)^n e^{t (x-y)}
\end{align*}
with $L'(t) <1$ if $t<0$ is close to $0$. Without  exponential moments assumption, the argument is a little bit more delicate and based on Nagaev-Fuchs inequality: we   detail it here. 
 \begin{lemma}\label{jhzcabgjehf}
  Assume that $\mathbb E[(\xi'_1)^2]+\mathbb E[({\xi'_1}^-)^{p}]<+\infty$ with $p\geq 2$ and $m':= \mathbb E[\xi'_1]>0$.   Then, there exists $ C_{ p}>0$ such that, for any $x\geq  1, y \leq 0$ and $n \geq 1$,
\begin{equation}\label{qvzgbrvhk}
 \mathbb P[\tau^{S'}(x) =n, x+S'_n=y ]\leq C_p\left({1 \over n^{p-1 } (1+\vert y\vert ^{p})}+{1\over (n+\vert y\vert)^p}\right).
\end{equation}
  In particular,  for any $\epsilon \in [0, 1)$, there exists $ C_{  p, \epsilon}>0$ such that
\begin{equation}\label{zqhcgbef}
 \mathbb P[\tau^{S'}(x) =n, x+S'_n=y ]\leq {C_{p,\epsilon}\over n^{p-1-\epsilon} (1+\vert y\vert ^{1+\epsilon})}.
 \end{equation}
 \end{lemma}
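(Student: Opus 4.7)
My plan is to condition on the penultimate position of the walk. Writing $z := x + S'_{n-1}$, the Markov property and independence of $\xi'_n$ give the decomposition
\begin{equation*}
\mathbb{P}[\tau^{S'}(x) = n,\, x + S'_n = y] = \sum_{z \geq 1} \mathbb{P}[\tau^{S'}(x) > n-1,\, x + S'_{n-1} = z]\, \mu'(\{y - z\}).
\end{equation*}
Because $y \leq 0 \leq 1 \leq z$, the last-step jump satisfies $y - z \leq -1$ with $|y - z| = z + |y|$, so Markov's inequality applied to the one-sided moment $\mathbb{E}[(\xi'_1)^-]^p < +\infty$ yields the pointwise bound $\mu'(\{y-z\}) \leq C_p / (z + |y|)^p$. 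The absence of $x$ from the final estimate reflects the fact that this last-step tail already produces enough decay on its own.

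I would then split the $z$-sum into three regimes dictated by the typical size $(n-1)m'$ of $x + S'_{n-1}$. In the \textit{typical} regime $nm'/2 \leq z \leq 2nm'$, the jump satisfies $|y-z| \asymp n + |y|$, so the pointwise bound gives $\mu'(\{y-z\}) \leq C_p/(n+|y|)^p$; summing the probabilities (each at most one) produces a contribution bounded by $C_p/(n+|y|)^p$. In the \textit{large} regime $z > 2nm'$ I would drop the probabilities to one and estimate the tail directly,
\begin{equation*}
\sum_{z > 2nm'} \mu'(\{y-z\}) = \mu'\bigl((-\infty,\, y - 2nm' - 1]\bigr) \leq \frac{C_p}{(2nm' + 1 + |y|)^p} \leq \frac{C_p}{(n+|y|)^p},
\end{equation*}
again by Markov applied to $(\xi'_1)^-$. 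Together these two regimes account for the second term of \eqref{qvzgbrvhk}.

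The \textit{small} regime $1 \leq z \leq nm'/2$ is where a Fuchs--Nagaev large-deviation inequality enters. Here $x + S'_{n-1} \leq nm'/2$ forces the centered walk $S'_{n-1} - (n-1)m'$ to deviate by at least $c n$ to the left, and a one-sided Fuchs--Nagaev estimate for centered i.i.d.\ sums with finite $p$-th moment on the negative part gives
\begin{equation*}
\mathbb{P}\bigl[x + S'_{n-1} \leq nm'/2\bigr] \leq \frac{C_p\, n}{n^p} = \frac{C_p}{n^{p-1}},
\end{equation*}
the sub-Gaussian contribution $\exp(-cn)$ being negligible. Combining this with the uniform bound $\mu'(\{y-z\}) \leq C_p/(1+|y|)^p$ (valid since $|y-z| \geq 1 + |y|$ for $z \geq 1$), this regime contributes at most $C_p/(n^{p-1}(1+|y|)^p) \asymp C_p/(n^{p-1}(1+|y|^p))$, which is the first term of \eqref{qvzgbrvhk}. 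The companion estimate \eqref{zqhcgbef} then follows from \eqref{qvzgbrvhk} by routine case analysis in the regimes $|y| \leq n$ and $|y| > n$, using only that $1 + \epsilon \leq p$.

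The main obstacle I anticipate is securing the Fuchs--Nagaev bound in the precise one-sided form I need: the hypotheses supply only a second moment on the positive part of $\xi'_1$, so the standard two-sided Fuchs--Nagaev inequality is unavailable. The argument sidesteps this because the right tail of $S'_{n-1}$ is never used directly---the large-$z$ contribution is absorbed by the smallness of $\mu'$ at large negative arguments, and only the left deviation $\{x + S'_{n-1} \leq nm'/2\}$ calls for a bona fide Nagaev-type estimate, which is exactly where the assumption $\mathbb{E}[(\xi'_1)^-]^p < +\infty$ is spent.
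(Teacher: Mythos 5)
Your proposal is correct and follows essentially the same route as the paper: both condition on the penultimate position $z=x+S'_{n-1}$, use the one-sided Nagaev inequality (which only needs $\mathbb E[(\xi'_1)^2]<\infty$ and the $p$-th moment of ${\xi'_1}^-$) to bound $\mathbb P[x+S'_{n-1}<m'n/2]$ by $C_p/n^{p-1}$ for the small-$z$ regime, and Markov's inequality on the left tail of $\mu'$ for the remaining $z$. The only cosmetic difference is that you split $z\geq m'n/2$ into two sub-regimes where the paper handles it in one stroke by dropping the probabilities and summing the tail $\mu'(-\infty, y-m'n/2]$.
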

  \begin{proof}
 We set $a:= m'/2$ and first notice that, by  \cite{nagaev},  there exists a strictly positive constant $c=c_{p}>0$ such that
 \begin{align*}
  \mathbb P[  S'_n<an ]&= 
    \mathbb P[  S'_n-nm'< -an  ]
  \\
 & \leq n \mathbb P[\xi'_1-m'<-an]+ c {n \over (an)^p}.
 \end{align*}
 Then $\displaystyle \mathbb P[ x+S'_n<an ]\leq {c'\over n^{p-1}  }$
 for some  $c' = c'_{p}>0$.
 As a consequence,  for $x\geq  1, y\leq 0$ and $n \geq 2$, 
 \begin{align*}
 \mathbb P[\tau^{S'}(x)=n, x+S'_n= y]&=\sum_{  z\geq 1}
  \mathbb P[\tau^{S'}(x)>n-1, x+S'_{n-1}= z]\, \mu'( -z +y)\\
  &\leq \sum_{1\leq z < an}  
  \mathbb P[\tau^{S'}(x)>n-1, x+S'_{n-1}= z]\, \mu'(-z  +y) 
+ \sum_{z\geq an} \mu'( - z  +y)
  \\
  &\leq 
    \mathbb P[S'_{n-1}< an] \ \mu'(-\infty , y)\   +  \mu'(-\infty,  y-an]
    \\ 
   &\leq  {c' \over n^{p-1} } \times {\mathbb E[({\xi'_1}^-)^p]\over 1+\vert y\vert^p}+ { \mathbb E[({\xi'_1}^-)^p]\over ( an +\vert y\vert )^p}
   \\
    & \leq   C_p \left( \frac{1}{n^{p-1 } (1+\vert y\vert^{p})}+ \frac{1}{(n+\vert y\vert)^p}\right), \quad {\rm for\  some\  strictly \ positive\  constant \ } C _{p}.
 \end{align*}
 
Let us now establish
  \eqref{zqhcgbef}. The inequality  
 $\displaystyle
 \frac{1}{n^{p-1 } (1+\vert y\vert^{p})}\preceq \frac{1}{ n^{p-1-\epsilon} (1+\vert y\vert ^{1+\epsilon})}
 $ is obvious   since  $p \geq 2$ and $\epsilon \in [0, 1)$.
 To prove that 
 $\displaystyle
\frac{1}{(n+\vert y\vert)^p}\preceq \frac{1}{ n^{p-1-\epsilon} (1+\vert y\vert ^{1+\epsilon})},
 $  
 it suffices to notice that  
 $$
  \inf_{ n, t \geq 1  } 
  \frac{(n+t)^p}{n^{p-1-\epsilon} t^{1+\epsilon} }
  \geq 
 \inf_{u>0}
 \left(u^{\frac{1+\epsilon}{ p}}  +\left(\frac{1}{u}\right)^{1-\frac{1+\epsilon}{ p}}\right)^p >0. 
  $$ 
 \end{proof}
\section{On the  switching subprocess  of   \texorpdfstring{$\mathcal X$}{X} }\label{subprocess}	  
 In this section, we introduce a subprocess of $\mathcal X$ which is of key importance in the sequel.  

 \subsection{The  switching subprocess  }
Let ${\bf C}=(C_k)_{k \geq 0} $  be the sequence of  ``switching times'',  ie  times at which the process $\mathcal{X}$    swaps regime.  We set  $C_0=0$ and  for any $k \geq 0$ such that $C_k <+\infty$, 
\begin{align}\label{ switchingtime}
    C_{k+1} :=\left\{ 
    \begin{array}{lll}
 \vspace{2mm}
    \inf \{n > C_k \mid X_{C_k} + (\xi_{C_k + 1} +  \dotsi +\xi_{n}) \geq 0\} &if \quad   X_{C_k} \leq -1, \\
      \vspace{2mm}
      \inf \{n > C_k \mid X_{C_k} + (\eta_{C_k + 1} +  \dotsi +\eta_{n}) \neq 0\}  &if \quad X_{C_k} =0, \\
 \inf \{n > C_k \mid   X_{C_k} + (\xi'_{C_k + 1} + \dotsi + \xi'_{n}) \leq 0 \} &if \quad X_{C_k} \geq 1 
 \end{array}
 \right.
\end{align} 
and $C_{k+1}=+\infty$ when $C_k= +\infty$.
  The  random variables $ C_k$ are stopping times with respect to the canonical filtration $(\mathcal F_n)_{n \geq 1}$ associated with the sequence $(\xi_{n}, \eta_n, \xi'_{n})_{n \geq 1}$.  
 When $\mathbb E[\xi_1]\geq 0$ and $\mathbb E[\xi'_1]\leq 0$,  these random  variables  are $\mathbb{P}$-as  finite. 
 When  $\mathbb E[\xi_1]< 0$ or $\mathbb E[\xi'_1]> 0$,  it holds  $C_k =+\infty$ for sufficiently large (random) $k$ ie the Markov chain  $(X_n)_{n \geq 0}$ stays ultimely   in exactly one of the half lines $\mathbb Z^{*-}$ or $\mathbb Z^{*+}$.

We denote by $\mathcal X_{\bf C}:=(X_{C_k}{\bf 1}_{[C_k<+\infty]})_{k \geq 0}$ the  subprocess of ${\bf C}$ and call it the {\bf  switching subprocess of $\mathcal X$}. It plays a crucial role  in the sequel.

 By \cite[Lemma~3.2]{peignevo}, when the sequence $(C_k)_{k \geq 0}$ is finite $\mathbb P$-as, 
 the process $\mathcal X_{\bf C}$  is a  time-homogeneous Markov chain on $\mathbb{Z}$ with transition kernel 
$ Q = (Q(x, y))_{x, y \in \mathbb Z}$ given by $Q(x, y) := \mathbb P_x[C_1<+\infty, X_{C_1}=y] $ for any $x, y \in \mathbb Z$. 
This readily implies that $Q^{(\ell)}(x, y) = \mathbb P_x[C_\ell <+\infty, X_{C_\ell}=y]$ for any $\ell \geq 0$ and yields
  the following formula
\begin{equation}\label{Qswitchingtime-finite1}
Q(x, y)  =   
 \left\{
   \begin{array}{clcll}
 \displaystyle \sum_{n \geq 1} \mathbb P [\tau^S(x)=n, x+S_n=y] &  if &x \leq -1& and &y \geq 0,  \\
  \displaystyle {\mu_0(y)\over 1-\mu_0(0)}  & if &x = 0& and &y \neq 0, 
   \\
  \displaystyle \sum_{n \geq 1} \mathbb P[\tau^{S'}(x)=n, x+S'_n=y]  & if &x \geq 1 &and &y \leq 0.   
   \end{array}
    \right.
\end{equation}
The switching times $C_\ell$ are  alsot  ascending and descending  ladder  times of $S$ and $S'$ respectively, hence we may also write
\begin{equation}\label{Qswitchingtime-finite2}
 Q(x, y) = \left\{
   \begin{array}{clcll} 
  \displaystyle\sum_{t = 0}^{\vert x\vert -1} \mu_{*+}(y-x-t)\, U_{*+}(t) &  if &x \leq -1& and &y \geq 0,\\
  \displaystyle {\mu_0(y)\over 1-\mu_0(0)} &  if &x = 0& and &y \neq 0, 
  \\
   \displaystyle\sum_{t = -x+1}^{0} \mu'_{*-}(y-x-t)\,\ U'_{*-}(t) & if &x \geq 1 &and &y \leq 0.
   \end{array}
    \right. 
\end{equation}
Expression \eqref{Qswitchingtime-finite2} is useful in the sequel to prove that $Q$ is a compact operator on some suitable  space depending on the explored situation.    Let ${\bf S}_{\mu_0}$ denote the support of $\mu_0$.
Following the argument developed in \cite{vo1}, we can prove that   the set   
\begin{equation*} \label{qzvge}
\mathcal I_{\bf C}=] D',  D[\  \cup \ {\bf S}_{\mu_0}  
\end{equation*}
 is the unique essential class for the switching process $\mathcal X_{\bf C}$. Indeed, on the one hand, since $\mu$ is aperiodic, it holds that $Q(x, y) >0$ for any $x \leq -1$ and $y \in [0, D[$ because $\mu(D)>0$ and there exists a path governed by $\mu$ from $x$ to $y-D\leq -1$  which stays in $\mathbb Z^{*-}$.  The same property holds for $x \geq 1$ and $y \in ]D', 0]$.  On the other hand  $Q(0, y)>0$ for any $y\in {\bf S}_{\mu_0}$. Since $Q(x, y)=0$ when $x \in   \mathcal I_{\bf C}$ and $y \notin \mathcal I_{\bf C}$, this proves that $\mathcal I_{\bf C}$  is irreducible.
When $\mathbb E[\xi_1]<0$, it holds that $\mathbb P[C_1<+\infty]<1$ for any $x\leq -1$ and $\mathcal I_{\bf C}$ is transient for $\mathcal X_{\bf C}$ ; the same property holds when $\mathbb E[\xi'_1]>0$. When $\mathbb E[\xi_1]\geq 0$ and $\mathbb E[\xi'_1]\leq 0$, the class $\mathcal I_{\bf C}$ is the unique essential class of  $\mathcal X_{\bf C}$ and it is recurrent.

It is natural to decompose $Q$ as $Q:= \sum_{n\geq 1} Q_n$ with 
 $Q_n(x,y):= \mathbb P_x[C_1=n, X_n=y] $ 
for any $x, y \in \mathbb Z$.  More precisely, 
\begin{equation}\label{Qswitchingtime-k}
Q_n(x, y)  =   
 \left\{
   \begin{array}{clcll}
 \displaystyle  \mathbb P [\tau^S(x)=n, x+S_n=y] &  if &x \leq -1& and &y \geq 0, 
  \\
  \displaystyle  \mu_0(0)^{n-1}\mu_0(y) &  if &x = 0& and &y \neq 0, 
  \\
  \displaystyle  \mathbb P[\tau^{S'}(x)=n, x+S'_n=y]  & if &x \geq 1 & and &y \leq 0.   
   \end{array}
    \right.
\end{equation}
Similarly,  for any $\ell \geq 0$, the $\ell$-th power  $ Q^{(\ell)}$ of $Q$    may also be decomposed as 
 $ Q^{(\ell)}:=\displaystyle  \sum_{n\geq 1} Q^{(\ell)}_n$ where 
\begin{align*}
  Q^{(\ell)}_n(x, y)&= \mathbb P_x[C_{\ell}=n, X_n=y]\\
  &=
\sum_{\stackrel{j_1, \ldots, \ j_\ell \geq 1  }{ j_1+\ldots +j_\ell = n}} \sum_{y_1, \ldots, y_\ell=y}  \mathbb P_x[C_1=j_1, X_{j_1}=y_1]  \times \cdots  \times \mathbb P_{y_{\ell-1}}[C_1=j_\ell, X_{j_\ell}=y ]  
\\
&=\sum_{\stackrel{j_1, \ldots, \ j_\ell \geq 1  }{ j_1+\ldots +j_\ell = n}} Q_{j_1} \ldots Q_{j_\ell}{\bf 1}_{\{y\}}(x).
\end{align*}

 \subsection{Decomposition of the trajectories of  \texorpdfstring{$\mathcal X$}{X} }\label{decomposition}

 The  subprocess $\mathcal X_{\bf C}$ allows us to decompose the trajectories of $(X_n)_{n \geq 0}$ into successive excursions inside   the different media. In particular,  for any $n \geq 1, x \in \mathbb Z$ and $y \geq 1$ (similar formulae hold when  $y \leq 0$), the    probability $\mathbb P_x[X_n=y] $  may be decomposed as 
 followed. 
\begin{align}  \label{decompositiontraject}
\mathbb P_x[X_n=y]&=
\sum_{\ell=0}^{+\infty} \mathbb P_x[X_n=y, C_\ell\leq n <C_{\ell+1}]\notag
\\
& =    \sum_{\ell=0}^{+\infty} \sum_{k=0}^{n-1}\sum_{z \geq 1} \underbrace{\mathbb P_x[C_\ell=k, X_k= z]}_{= Q^{(\ell)}_k(x,z)}\ 
\underbrace{ \mathbb P[z+S'_1 \geq 1, \ldots, z+S'_{n-k-1} \geq 1, z+S'_{n-k}=y]}_{:=   {\bf V}_{  n-k, y}(z)}  \notag
\\
&
 \qquad \qquad +  \sum_{\ell=0}^{+\infty}  
\underbrace{\mathbb P_x[C_\ell=n, X_n= y]}_{= Q^{(\ell)}_n(x,y)}\  \quad{\rm with\ the \ convention} \ 
Q^{(0)}_k(x,z)= {\bf 1}_{\{x\}}(z),
\notag\\
& =  \sum_{k=0}^n   \left(\left(\sum_{\ell=0}^{+\infty} Q^{(\ell)}_k\right)  {\bf V}_{ n-k, y}\right)(x),  \ \text{with the convention} \   {\bf V}_{0,y}(z):= {\bf 1}_{\{y\}}(z).
\end{align}
In the sequel we thus consider the functions $  {\bf V}_{n, y}$ for $ n \geq 0$ and $ y \in \mathbb Z$  corresponding   to the final excursions  of the trajectories inside  the positive or negative half-lines  and defined by $  {\bf V}_{0, y}(z):= {\bf 1}_{\{y\}}(z) $ and, for $n \geq 1$,

$\bullet$ if $y\leq -1$ then 
\begin{equation}\label{hny}
   {\bf V}_{n, y}: x \mapsto \left\{\begin{array}{cll}
  \mathbb P[\tau^S(x)>n, x+S_n=y] 
 & if & x \leq -1, 
  \\
  0&if & x \geq 0,
\end{array} 
\right. 
\end{equation}

$\bullet$ if $y=0$ then $   {\bf V}_{n, 0}(0)=\mu_0(0)^n$ and $  {\bf V}_{n, 0}(x)=0$ otherwise,

$\bullet$ if $y\geq  1$ then 
\begin{equation}\label{h'ny}
  {\bf V}_{n, y}: x\mapsto\left\{\begin{array}{cll}
 0&  if & x \leq 0,  
  \\
 \mathbb P[\tau^{S'}(x)>n, x+ S'_n =y] & if & x \geq 1.
\end{array}
\right.
\end{equation}

 When  $\mathbb E[\xi_1]\geq 0$ and $\mathbb E[\xi'_1]\leq 0$, it holds  that $\mathbb P_x[C_\ell <+\infty]=1$ for any $\ell \geq 0$. In this case, we  need to control firstly the behavior  as $n \to +\infty$ of the transition operator $T_n$ defined for any $x, z \in \mathbb Z$ and $n \geq 1$ by  
\begin{equation}\label{azevb}
T_n(x, z):= \sum_{\ell=1}^{+\infty}Q_n^{(\ell)} (x, z)
=\displaystyle\sum_{\ell=0}^{+\infty}  \mathbb P_x[C_\ell= n, X_ n= z] \end{equation} 
and secondly the one of the functions $  {\bf V}_{ n, y}$. Since  $C_\ell =\displaystyle \sum_{i=0} ^{\ell-1} (C_{i+1}-C_i)$ for any $\ell \geq 1$, the control of $T_n$  as $n \to +\infty$  corresponds to a renewal theorem   for the sequence of non iid random variables $(C_{i+1}-C_i)_{i \geq 0}$  while the one of  $  {\bf V}_{ n, y}$ deals with  fluctuations of the   random walk  $S'$.

When  $\mathbb E[\xi_1]<0$  (resp. $\mathbb E[\xi'_1]> 0$),  it holds that   $\mathbb P_x[C_1 <+\infty]\leq \mathbb P_{-1}[C_1 <+\infty] <1 $ when $x \leq -1$ (resp.  $\mathbb P_x[C_1 <+\infty]\leq \mathbb P_{1}[C_1 <+\infty] <1$ when $x \geq 1$) so that the quantity  $\mathbb P_x[C_\ell <+\infty]$
 decreases exponentially  to $0$,   uniformly in $x$. In this case, we need to control the behavior of $\mathbb P_x[C_\ell=k, X_k= z] $ as $k \to +\infty$, for any $\ell \geq 1$ and $z \in \mathbb Z$. 
  
 In both  cases, the fact that the random variables  $C_{i+1}-C_i$ are not  iid  forces us to use a functional version  of a renewal theorem that we present in the following section.

   \subsection{A local limit theorem for an aperiodic  sequence of operators}
	
The previous decomposition $\displaystyle C_\ell {\bf 1}_{[C_\ell <+\infty]} = \sum_{i=0} ^{\ell-1} (C_{i+1}-C_i){\bf 1}_{[C_{i+1} <+\infty]}$ for $ \ell \geq 1 $ stipulates that $C_\ell$ is a sum of the integer valued random increments  $\Delta_i=C_{i+1}-C_{i}$.

If the  $\Delta_i $  were  iid, the process $(C_\ell)_{\ell\geq 0}$  would be  a classical random walk $(\Delta_0+ \Delta_1+\ldots + \Delta_{\ell-1})_{\ell\geq 0  }$  on $\mathbb N$ with increments $\Delta_i$ having an infinite mean, since by   Lemma \ref{MAJ+asym} the  tail distribution function of these increments  would satisfy  $\mathbb P[\Delta_i>n]\sim c / n^\beta $ with $\beta = 1/2$ and $c>0$.
 The precise asymptotic behavior as $n \to +\infty$ of the quantity $\sum_{\ell \geq 0} \mathbb P[C_\ell =n]$ would be given by 
renewal theorems, which do exist  in this infinite mean situation (we refer for instance to the seminal works by  \cite{garsialamperti} and \cite{erickson}).  Nevertheless, when $0<\beta\leq 1/2 $, the strong renewal theorem  is not valid in full generality,  but only when $n$ varies in a set of integers having density 1. This result has been improved by R.A. Doney \cite{doney1997} under the additional hypothesis $\mathbb P[\Delta_1= n] \preceq  L(n)/n^{1+\beta}$ where $L$ is a slowly varying function.

In our cases, the random variables $C_{i+1}-C_i $ are not  iid  and the strong renewal theorem is more complicated to prove. We use a result due to   Gou\"ezel to study the  cases 
 $({\bf P,N}),  (\bf Z,Z)$ and $(\bf P,Z)$. We make  precise this statement  in Theorem \ref{theogouezel+} under  some additive hypotheses   to  analyze also the cases  $({\bf Z, P}),   ({\bf N, P})$ and $({\bf P, P})$.

	  An {\it aperiodic  renewal sequence of operators} is a  sequence  $(\mathcal R_n)_{n \geq 1}$  of operators acting on a  $\mathbb C$-Banach space $({\mathcal B}, \vert \cdot \vert_{\mathcal B })$ and satisfying the following conditions.
	  
{\it 
 $\bullet$ For $n\geq 1$, the operators $\mathcal R_n$  act  on ${\mathcal B }$  and $\displaystyle \sum_{n \geq 1} \Vert \mathcal  R_n\Vert_{\mathcal B } <+\infty$ (where $\Vert\cdot \Vert_{\mathcal B }$ denotes the operator norm on the Banach space $\mathcal L({\mathcal B })$ of linear continuous operators on $({\mathcal B },  \vert \cdot \vert _{\mathcal B })$).

 $\bullet$ The operators  $\displaystyle \mathcal R(z):= \sum_{n \geq 1} z^n \mathcal R_n$  defined for any $z $ in  the  closed unit ball $\overline{\mathbb{D}}$ in $\mathbb C$ satisfy  
 \begin{enumerate}[R1-]
\item   $\mathcal R(1)$ has a simple eigenvalue at 1 (with corresponding eigenprojector $\Pi_\mathcal R$) and the rest of its spectrum is contained in a disk of radius strictly less than $ 1$;

\item for any complex number $z \in \overline{\mathbb{D}} \setminus \{1\}$, the spectral radius of $\mathcal R(z)$ is strictly less than $1$; 

\item  for any $n \geq 1$, the real number $r_n$ defined by $\Pi_\mathcal R \mathcal R_n \Pi_\mathcal R= r_n \Pi_\mathcal R$ is non negative.   
\end{enumerate} 
 }
\noindent  Condition {\it R2} implies that,  for any $ z \in \overline{\mathbb D}\setminus \{1\}$, the operator $I-\mathcal R(z)$ is invertible on ${\mathcal B }$  and
\[
(I-\mathcal R(z))^{-1}= \sum_{\ell \geq 0} \mathcal R(z)^\ell= \sum_{\ell \geq 0} \left(\sum_{j \geq 1}
z^j\mathcal R_j\right)^\ell=
\sum_{n \geq 0}  
{\mathcal T}_n z^n
\]
where    $  {\mathcal T} _0= I$ and $\displaystyle   {\mathcal T} _n:= \sum_{\ell=1}^{+\infty} \left( \sum_{j_1+ \ldots + j_\ell=n}{\mathcal R}_{j_1}\ldots {\mathcal R}_{j_\ell}\right)$  for any $n\geq 1$.

We write $\mathcal R= \mathcal R(1)$ for short  and notice that $\displaystyle \mathcal R^{(\ell)}  : = \underbrace{\mathcal R\circ \ldots \circ \mathcal R}_{ \ell \   times}= \sum_{n \geq 1} \left( \sum_{j_1+ \ldots + j_\ell=n}{\mathcal R}_{j_1}\ldots {\mathcal R}_{j_\ell}\right)$; it is thus natural to set $\displaystyle \mathcal R_n^{(\ell)}:= \sum_{j_1+ \ldots + j_\ell=n}{\mathcal R}_{j_1}\ldots {\mathcal R}_{j_\ell}$ so that $\displaystyle \mathcal R^{(\ell)}= \sum_{n \geq 1} \mathcal R_n^{(\ell)}$ for any $\ell \geq 1$. As usual, we set $\mathcal R_n^{(0)}=I$.

 \begin{theo} \label{theogouezel}\cite{gouezel}
 Let $(\mathcal R_n)_{n \geq 1}$  be an aperiodic renewal sequence of operators acting on the  Banach space $({\mathcal B}, \vert \cdot \vert_{\mathcal B })$ and    assume that 
 there exist  $C>0,\ \beta \in ]0, 1[$ and   a slowly varying function $L$ such that

 R4($ L,\beta)$-$    \quad \displaystyle     \Vert \mathcal R_n\Vert_{{\mathcal B }} \leq C \frac{L(n)}{n^{1+\beta}}$;

 R5($ L,\beta)$-$ \quad \displaystyle      \sum_{j>n}r_j \thicksim \frac{L(n)}{n^\beta}$ as $n \to +\infty$.

\noindent Then the sequence $(n^{1-\beta} L(n) {\mathcal T}_n)_{n\geq 1}$ converges in $(\mathcal L({{\mathcal B }}), \Vert \cdot \Vert_{\mathcal B })$ to the operator $d_\beta \Pi_\mathcal R$, with $d_\beta = \frac{1}{\pi}\sin (\beta \pi)$. 

 \end{theo}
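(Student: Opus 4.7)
My plan is to reduce the operator-valued renewal problem to a scalar Tauberian asymptotic and then transfer the result back to the operator level. First, using hypothesis \textbf{R1} together with analytic perturbation theory, I would obtain, for $z$ in a neighborhood $V$ of $1$ in $\overline{\mathbb{D}}$, a decomposition
\[
\mathcal{R}(z) = \lambda(z)\Pi(z) + N(z),
\]
where $\lambda(z)$ is the simple dominant eigenvalue with $\lambda(1)=1$, $\Pi(z)$ is the rank-one spectral projector with $\Pi(1)=\Pi_{\mathcal{R}}$, and $N(z)$ commutes with $\Pi(z)$ and has spectral radius uniformly bounded below $1-\eta$ for some $\eta>0$; these data are analytic on $V$. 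The summability in \textbf{R4} makes $z\mapsto \mathcal{R}(z)$ continuous on $\overline{\mathbb{D}}$, and \textbf{R2} combined with a compactness argument ensures $(I-\mathcal{R}(z))^{-1}$ is uniformly bounded on compact subsets of $\overline{\mathbb{D}}\setminus\{1\}$. Writing $\mathcal{T}(z)=\sum_{n\geq 0}\mathcal{T}_n z^n=(I-\mathcal{R}(z))^{-1}$, the decomposition
\[
\mathcal{T}(z) \;=\; \frac{\Pi(z)}{1-\lambda(z)} \;+\; (I-N(z))^{-1}
\]
localizes the entire singularity in the first term near $z=1$; the second term is continuous on $\overline{\mathbb{D}}$ and will contribute only lower-order Fourier coefficients.

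Next I would pin down the scalar behavior of $1-\lambda(z)$ near $z=1$. Multiplying the eigenvalue relation $\mathcal{R}(z)\Pi(z)=\lambda(z)\Pi(z)$ by $\Pi_{\mathcal{R}}$ and expanding $\Pi(z)=\Pi_{\mathcal{R}}+O(|1-z|)$ yields
\[
\lambda(z) = R(z) + O(|1-z|^{1+\varepsilon}), \qquad R(z):=\sum_{n\geq 1} r_n z^n.
\]
Condition \textbf{R3} forces $r_n\geq 0$, and $\lambda(1)=1$ forces $\sum r_n = 1$, so $(r_n)$ is a probability distribution. Hypothesis \textbf{R5}($L,\beta$) describes its heavy tail, $\sum_{j>n} r_j \sim L(n)/n^\beta$, and Karamata's Tauberian theorem then gives
\[
1-R(z) \;\sim\; \Gamma(1-\beta)\, L\!\bigl(\tfrac{1}{|1-z|}\bigr)\,(1-z)^\beta \qquad (z\to 1,\ z\in\overline{\mathbb{D}}).
\]
Since $\beta<1$, the correction $\lambda(z)-R(z)$ is negligible, and consequently
\[
\frac{1}{1-\lambda(z)} \;\sim\; \frac{1}{\Gamma(1-\beta)\, L\!\bigl(\tfrac{1}{|1-z|}\bigr)(1-z)^\beta}.
\]

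Finally I would extract the $n$-th Fourier coefficient via
\[
\mathcal{T}_n = \frac{1}{2\pi}\int_{-\pi}^{\pi} e^{-in\theta}\,\mathcal{T}(e^{i\theta})\,d\theta,
\]
splitting the integral into a singular arc $|\theta|\leq \delta_n$ and its complement. On the regular part, \textbf{R2} and \textbf{R4} give uniform operator bounds yielding an $o\bigl(n^{\beta-1}/L(n)\bigr)$ contribution. On the singular arc, the scalar asymptotic above reduces the problem to the classical Karamata coefficient inversion for $(1-z)^{-\beta}$, producing
\[
[z^n]\,\frac{1}{1-\lambda(z)} \;\sim\; \frac{1}{\Gamma(1-\beta)\Gamma(\beta)}\cdot\frac{1}{n^{1-\beta}L(n)} \;=\; \frac{\sin(\pi\beta)}{\pi}\cdot\frac{1}{n^{1-\beta}L(n)} \;=\; \frac{d_\beta}{n^{1-\beta}L(n)},
\]
with $\Pi(z)$ contributing the operator factor $\Pi_{\mathcal{R}}$ to leading order; the contribution from $(I-N(z))^{-1}$ is absorbed into the error. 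The hard part is executing this Tauberian extraction uniformly in the operator norm rather than only scalarly: one must isolate the scalar factor $1/(1-\lambda(z))$ inside the contour integral so that Karamata's inversion applies coefficientwise, and then control $\Pi(z)-\Pi_{\mathcal{R}}$ and $(I-N(z))^{-1}$ uniformly in $\|\cdot\|_{\mathcal{B}}$ along the contour using \textbf{R4} and the uniform spectral gap of $N(z)$. This uniform operator-valued Karamata step is precisely the technical heart of \cite{gouezel}.
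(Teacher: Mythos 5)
The paper offers no proof of this statement: it is quoted from \cite{gouezel} (Theorem 1.4 there), so the comparison must be with Gou\"ezel's actual argument. Your route --- spectral decomposition $\mathcal R(z)=\lambda(z)\Pi(z)+N(z)$, Karamata asymptotics for $1-\lambda(z)$, then Fourier coefficient extraction --- is the classical Sarig--Gou\"ezel operator renewal scheme, which is known to work only for $\beta>1/2$, whereas the theorem is stated for all $\beta\in\,]0,1[$. The decisive gap is the sentence ``the scalar asymptotic above reduces the problem to the classical Karamata coefficient inversion''. Karamata's Tauberian theorem converts $1/(1-\lambda(z))\sim (1-z)^{-\beta}/(\Gamma(1-\beta)L)$ into an asymptotic for the \emph{partial sums} $\sum_{k\le n}\mathcal T_k$, not for the individual coefficients $\mathcal T_n$; upgrading a partial-sum asymptotic to a local one is precisely the content of the strong renewal theorem, which is \emph{false} in general for $\beta\le 1/2$ without the local condition R4 (Garsia--Lamperti, Erickson, Doney). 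Equivalently, on the Fourier side: since $\sum n\Vert\mathcal R_n\Vert_{\mathcal B}=\infty$, the map $\theta\mapsto\mathcal T(e^{i\theta})$ is merely continuous away from $\theta=0$, so uniform bounds on the regular arc give an $O(1)$ contribution, not the $o(n^{\beta-1}/L(n))$ you assert --- a bound which for small $\beta$ is nearly $o(1/n)$ and cannot follow from continuity alone.

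The hypothesis R4 must therefore enter locally, not merely through summability. In Gou\"ezel's proof it does so via pointwise bounds on the convolution powers, $\Vert\mathcal R_n^{(\ell)}\Vert_{\mathcal B}\le C/\alpha_\ell$ for $n\le\alpha_\ell$ and $\Vert\mathcal R_n^{(\ell)}\Vert_{\mathcal B}\le C\,\ell\, L(n)/n^{1+\beta}$ for $n\ge\alpha_\ell$ with $\ell L(\alpha_\ell)\sim\alpha_\ell^{\beta}$ (exactly the estimates the present paper borrows to prove Theorem \ref{theogouezel+}); one then sums $\mathcal T_n=\sum_{\ell\ge 1}\mathcal R_n^{(\ell)}$ directly, Doney-style, the dominant contribution coming from $\ell\asymp n^{\beta}/L(n)$ where a local limit theorem for $\mathcal R_n^{(\ell)}$ produces $d_\beta\Pi_{\mathcal R}$. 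There are also smaller slips in your sketch: the resolvent identity should read $\mathcal T(z)=\frac{1}{1-\lambda(z)}\Pi(z)+(I-N(z))^{-1}(I-\Pi(z))$, and the expansion $\Pi(z)=\Pi_{\mathcal R}+O(\vert 1-z\vert)$ is unavailable on the boundary circle, where $\mathcal R(\cdot)$ is only H\"older of exponent $\beta$. As written, your argument could at best be completed for $\beta>1/2$; it does not cover the range of $\beta$ claimed by the statement.
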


In order to control the behavior as $n \to +\infty$ of  each term $\mathcal R_n^{(\ell)}$ for  $\ell \geq 1  $, we reinforce the hypotheses and obtain the following statement.

 \begin{theo} \label{theogouezel+}
 Let $(\mathcal R_n)_{n \geq 1}$  be an aperiodic renewal sequence of operators acting on the  Banach space $({\mathcal B}, \vert \cdot \vert_{\mathcal B })$. Assume that 
 there exist  $\beta \in ]0, 1[$ and  a slowly varying function $L$ such that    the sequence $\displaystyle \left({n^{1+\beta}\over L(n)} \mathcal R_n\right)_{n \geq 1}$ converges  in $(\mathcal L({{\mathcal B }}), \Vert \cdot \Vert_{\mathcal B })$  to some
  bounded operator $\mathcal E$. 
Then  
\begin{enumerate}
 \item there exists a strictly positive constant $C $ such that  for any $n, \ell \geq 1$, 
\begin{equation} \label{evgj}
\Vert \mathcal R_n^{(\ell)}\Vert_{  \mathcal B} \leq  C \ell^2    {L(n) \over n^{1+\beta}};
\end{equation}
\item for any $\ell \geq 1$, the sequence $\displaystyle \left({n^{1+\beta}\over   L(n)} \mathcal R_n^{(\ell)}\right)_{n \geq 1}$ converges    in $(\mathcal L({{\mathcal B }}), \Vert \cdot \Vert_{\mathcal B })$ to the operator $\displaystyle \mathcal E_\ell:= \sum_{i=0}^{\ell-1} \mathcal R^{(i)} \mathcal E\mathcal R^{(\ell-i-1)}$.
\end{enumerate}
 \end{theo}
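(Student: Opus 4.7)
The plan is to establish both statements by induction on $\ell$, with part (2) driving the inductive step and the uniform bound controlled along the way. Preliminary observations: the hypothesis gives $\|\mathcal R_n\| \leq K f(n)$ with $f(n):=L(n)/n^{1+\beta}$, and the spectral assumptions R1--R2 yield the decomposition $\mathcal R = \Pi_\mathcal R + \mathcal N$ with $\Pi_\mathcal R \mathcal N = \mathcal N \Pi_\mathcal R = 0$ and $\|\mathcal N^{(\ell)}\| \leq C\kappa^\ell$ for some $\kappa<1$. In particular $\mathcal R^{(\ell)} = \Pi_\mathcal R + \mathcal N^{(\ell)}$ remains uniformly bounded, $M:=\sup_\ell \|\mathcal R^{(\ell)}\| < \infty$, and $\|\mathcal E_\ell\| \leq \ell M^2\|\mathcal E\|$ follows from the very definition of $\mathcal E_\ell$.

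For part (2), I would use the recursion $\mathcal R_n^{(\ell+1)} = \sum_{k+m=n}\mathcal R_k\mathcal R_m^{(\ell)}$, splitting at $k=\lfloor n/2\rfloor$. In the lower half, the inductive hypothesis combined with the slow variation of $L$ (giving $f(n-k)\sim f(n)$ uniformly for $k\leq n/2$) yields $\mathcal R_{n-k}^{(\ell)} = \mathcal E_\ell f(n) + o(f(n))$ uniformly; summing against the absolutely convergent series $\sum_k \mathcal R_k = \mathcal R$ contributes $\mathcal R\mathcal E_\ell f(n) + o(f(n))$. Symmetrically, after reindexing $j=n-k$, the upper half contributes $\mathcal E\mathcal R^{(\ell)}f(n) + o(f(n))$ via $\sum_j \mathcal R_j^{(\ell)} = \mathcal R^{(\ell)}$. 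The telescoping identity
$$\mathcal R\mathcal E_\ell + \mathcal E\mathcal R^{(\ell)} = \sum_{j=1}^{\ell}\mathcal R^{(j)}\mathcal E\mathcal R^{(\ell-j)} + \mathcal R^{(0)}\mathcal E\mathcal R^{(\ell)} = \mathcal E_{\ell+1}$$
then identifies the limit and closes the induction.

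Part (1) is the delicate point and constitutes the main obstacle. A naive convolution-of-norms bound $\|\mathcal R_n^{(\ell+1)}\|\leq\sum_k\|\mathcal R_k\|\|\mathcal R_{n-k}^{(\ell)}\|$ yields only exponential growth $A^\ell$, since the total variation $A:=\sum_k\|\mathcal R_k\|$ need not be small. To recover the polynomial bound $C\ell^2 f(n)$ one must exploit the spectral structure: decompose each factor as $\mathcal R_n = r_n\Pi_\mathcal R + \mathcal S_n$ with $\mathcal S_n := \mathcal R_n - r_n\Pi_\mathcal R$ (noting $r_n\geq 0$ by R3 and $\sum_n r_n = 1$ from $\Pi_\mathcal R\mathcal R\Pi_\mathcal R=\Pi_\mathcal R$), expand the product $\mathcal R_{j_1}\cdots\mathcal R_{j_\ell}$, and collapse maximal runs of $\Pi_\mathcal R$ via $\Pi_\mathcal R^2=\Pi_\mathcal R$. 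This reduces the $2^\ell$ expansion terms to $O(\ell^2)$ patterns indexed by the location of the distinguished heavy factor and the length of the adjacent complementary blocks, each pattern being controlled by the regular-variation estimate $(f*f)(n)=O(f(n))$ and the geometric decay of $\|\mathcal N^{(p)}\|$. This is the step where the structural hypotheses R1--R3 are essential, paralleling the analysis in Gou\"ezel's original renewal theorem.
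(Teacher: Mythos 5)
Your part (2) follows essentially the paper's route (induction on $\ell$, splitting the convolution $\mathcal R_n^{(\ell+1)}=\sum_k \mathcal R_k\mathcal R_{n-k}^{(\ell)}$, and the telescoping identity $\mathcal R\mathcal E_\ell+\mathcal E\mathcal R^{(\ell)}=\mathcal E_{\ell+1}$), but your justification of the lower half is not correct as stated: it is false that $f(n-k)\sim f(n)$ uniformly for $k\leq n/2$ (at $k=\lfloor n/2\rfloor$ the ratio tends to $2^{1+\beta}$, slow variation only handles the $L$-factor), so you cannot replace $\mathcal R_{n-k}^{(\ell)}$ by $\mathcal E_\ell f(n)+o(f(n))$ uniformly on that whole range. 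The repair is exactly the paper's three-block splitting: cut at a fixed $m$, let $n\to+\infty$ first and $m\to+\infty$ afterwards, dominating the middle block by $\Vert\mathcal R_k\Vert\,\Vert\mathcal R_{n-k}^{(\ell)}\Vert\preceq \Vert\mathcal R_k\Vert f(n)$ and using the summable tail $\sum_{k>m}\Vert\mathcal R_k\Vert$. This is a fixable sloppiness, not a fatal one (for fixed $\ell$ the needed domination already follows from the inductive hypothesis).

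The genuine gap is part (1). The paper does not prove \eqref{evgj} by expanding the products: it quotes \cite[Proposition 1.5, Theorem 1.6]{gouezel}, which give $\Vert\mathcal R_n^{(\ell)}\Vert\leq C/\alpha_\ell$ for $n\leq\alpha_\ell$ and $\Vert\mathcal R_n^{(\ell)}\Vert\leq C\ell L(n)/n^{1+\beta}$ for $n\geq\alpha_\ell$ (with $\ell L(\alpha_\ell)\sim\alpha_\ell^{\beta}$), and the factor $\ell^2$ then comes from the elementary comparison $1/\alpha_\ell\leq C\ell^2 L(n)/n^{1+\beta}$ on $n\leq\alpha_\ell$ via Potter's lemma — those cited bounds are substantial results obtained from the spectral/Fourier analysis of $z\mapsto\mathcal R(z)$ on the closed unit disc, not from a combinatorial expansion. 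Your substitute argument does not work as described. First, writing $\mathcal R_{j}=r_j\Pi_\mathcal R+\mathcal S_j$ and collapsing $\Pi$-runs, the pieces you must control are sums $\sum_{j_1+\cdots+j_p=s}\mathcal S_{j_1}\cdots\mathcal S_{j_p}$ at a \emph{prescribed} total index $s$; the geometric decay of $\Vert\mathcal N^{(p)}\Vert$ only controls the sum of these operators over all $s$, while the termwise bound is $(\sum_j\Vert\mathcal S_j\Vert)^p$, which grows exponentially since that series need not be smaller than $1$. A coefficient-wise (local in $n$) bound on such non-leading blocks, uniform in the block length, is precisely the hard content of Gou\"ezel's estimates, and your sketch assumes it rather than proves it. Second, the claim that the expansion reduces to $O(\ell^2)$ patterns is unsubstantiated: only isolated single $\mathcal S$'s sandwiched between $\Pi$'s vanish (since $\Pi_\mathcal R\mathcal S_j\Pi_\mathcal R=0$), so the number of surviving block configurations is exponential in $\ell$, and the $\ell^2$ in \eqref{evgj} has a different origin (the regime $n\leq\alpha_\ell$), not a pattern count. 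As written, part (1) of your proposal is a programme whose key estimate is equivalent to the results the paper cites.
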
 
\begin{proof}  
 1. Let $(\alpha_\ell)_{\ell \geq 1}$ be  a sequence with $ \ell L(\alpha_\ell) \sim \alpha_\ell^{\beta}$. By \cite[Proposition 1.5, Theorem 1.6]{gouezel}, there exists a strictly positive constant$C$ such that
 $$
 \Vert \mathcal R_n^{(\ell)}\Vert_{  \mathcal B} \leq  \left\{
\begin{array}{lll}
  {C/ \alpha_\ell}   &  {\rm if} &  1\leq n \leq \alpha_\ell,
  \\
  \ & \ & \ 
  \\
  C \ell{L(n)\over n^{1+\beta}}&  {\rm if}   &  n \geq \alpha_\ell.
 \end{array}
 \right.
 $$
 Notice that, for $\ell$ large enough    and $1\leq n \leq \alpha_\ell$,
 $$
 {1\over \alpha_\ell}\leq 2  \ell 
 {L(\alpha_\ell)\over \alpha_\ell^{1+\beta}}
 \leq 2 \ell \ {L(n)\over n^{1+\beta}} \  {L(\alpha_\ell) \over L(n)} \leq C \ell^2{L(n)\over n^{1+\beta}},
 $$  
where the last inequality follows by   Potter's lemma.

 \noindent 2. We proceed by induction.  By hypothesis,  the assertion holds for $\ell=1$.  
 Assume that   
  $\displaystyle \left({n^{1+\beta}\over L(n)} \mathcal R_n^{(\ell)}\right)_{n \geq 1}$ converges  in $(\mathcal L({{\mathcal B }}), \Vert \cdot \Vert_{\mathcal B })$ to the operator $\displaystyle \mathcal E_\ell:= \sum_{i=0}^{\ell-1} \mathcal R^{(i)} \mathcal E\mathcal R^{(\ell-i-1)}$ for some $\ell \geq 1$. We fix $1\leq m \leq n/2$  and decompose $\displaystyle  \mathcal R_n^{(\ell+1)}= \sum_{i=1}^{n-1} \mathcal R_i^{(\ell)}  \mathcal R_{n-i} $ as 
\begin{align*}
   \mathcal R_n^{(\ell+1)}  = A_n(m)+ B_n(m) +C_n(m)
   \end{align*}
with $\displaystyle A_n(m):= 
     \sum_{i=1}^m  \mathcal R_i^{(\ell)}  \mathcal R_{n-i}, \ B_n(m)= \sum_{i=m+1}^{n- m-1}  \mathcal R_i^{(\ell)}  \mathcal R_{n-i}$
     and $\displaystyle  C_n(m)=   \sum_{i=n-m}^{n-1}   \mathcal R_i^{(\ell)}  \mathcal R_{n-i}$.
     
 (i) On the one hand, the  sequences  $\left({n^{1+\beta}\over L(n)}A_n(m)\right)_{n\geq 1}$ and $\left({n^{1+\beta}\over L(n)} C_n(m)\right)_{n\geq 1}$  converge   in $(\mathcal L({{\mathcal B }}), \Vert \cdot \Vert_{\mathcal B })$ to 
$\displaystyle \left(\sum_{i=1}^m\mathcal R_i^{(\ell)}\right) \mathcal E$ and  $\displaystyle \mathcal E_\ell \sum_{i=1}^m \mathcal R_i$, respectively. 

 (ii) On the other hand, 
by combining  {\it R4}$(L,\beta)$ and 
(\ref{evgj}), there exists  a strictly positive constant $C$ such that
$$
\displaystyle \bigg\Vert \sum_{i=m+1}^{\lfloor  n/2\rfloor}   \mathcal R_i^{(\ell)}    \mathcal R_{n-i}\bigg\Vert_{  \mathcal B} \leq C \ell^2 \  {L(n) \over n^{1+\beta}} \sum_{i=m+1}^{\lfloor  n/2\rfloor}   { L(i)  \over i^{1+\beta}}
\leq C \ell^2 \  {L(n) \over n^{1+\beta}} \sum_{i=m+1}^{+\infty}   { L(i)  \over i^{1+\beta}}.
$$
By symmetry, the same inequality holds for   $\displaystyle  \bigg\Vert \sum_{i=\lfloor  n/2\rfloor+1}^{n-m-1}   \mathcal R_i^{(\ell)}    \mathcal R_{n-i}\bigg\Vert_{  \mathcal B}.$ Hence, uniformly in $n\geq 2m$, 
 the sequence $\displaystyle \left({n^{1+\beta}\over L(n)} B_n(m) \right)_m$ converges to $0$ as $m \to +\infty$.

 By letting first $n \to +\infty$ and then $m \to +\infty$,   we obtain that the sequence $\left({n^{1+\beta}\over L(n)}\mathcal R_n^{(\ell+1)}\right)_n$ 
 converges to  
$\displaystyle \left(\sum_{i\geq 1}\mathcal R_i^{(\ell)}\right) \mathcal E+ \mathcal E_\ell \sum_{i\geq 1}  \mathcal R_i = \mathcal R^{(\ell)}\mathcal E+ \mathcal E_\ell \mathcal R= \mathcal E_{\ell+1}$.
\end{proof}


\section{Proof of Theorem \ref{theo1}: the recurrent  cases }\label{sectionrecurrent}

 In this section, we focus on the null recurrent cases $(\bf Z,Z)$ and $\bf (P,Z)$; in particular,  we first control the spectrum of the operator $Q$ and then investigate the  behavior of the associated sequence $(Q_n)_{n \geq 0}$ to be defined.

\subsection{ A toy model in the case \texorpdfstring{$(\bf Z, Z)$}{ZZ}}\label{toy}
We consider here the case when $D=1$ and $D'=-1$. This hypothesis implies that when the process $(X_n)_{n \geq 0}$ leaves one of the half lines $\mathbb Z^{*+}$ or $\mathbb Z^{*-}$  it reaches the site $0$. The condition $\mathbb E[\xi_1]=\mathbb E[\xi'_1]=0$ implies that $\mathbb P_x[C_1<+\infty]=1$  so that the transition matrix $ Q = (Q(x, y))_{x, y \in \mathbb Z}$ of the switching  subprocess  is given by 
\begin{equation*} 
Q(x, y)  =   
 \left\{
   \begin{array}{clcll}
  {\bf 1}_{\{0\}}(y)&  if &x \neq 0,& \  &  \   \\
    \displaystyle {\mu_0(y)\over 1-\mu_0(0)}  &  if &x = 0& and &y \neq 0.   
   \end{array}
    \right.
\end{equation*}
Consequently, the excursions between two successive switching times are independent, which simplifies the analysis.  Let us detail for instance the way to estimate the asymptotic behavior of  $\mathbb P_0[X_n=y]$, the other cases may be treated in a similar way using the decomposition of trajectories presented in subsection \ref{decomposition}.

Let $({\bf t}_\ell)_{\ell \geq 0}$ be the successive visit times of the origin defined by ${\bf t}_0=0$ and 
${\bf t}_\ell=\inf \{n>{\bf t}_{\ell-1}\mid X_n=0\}$ for $\ell\geq 1$. For $\ell \geq 1$, the random variables ${\bf t}_{\ell}-{\bf t}_{\ell-1} $ are iid  so that  $({\bf t}_\ell)_{\ell \geq 0}$ is a classical random walk on $\mathbb N$ with jump distribution $\mu_{\bf t}$ given by:  for any $n \geq 1$, 
\begin{equation*}
\mu_{\bf t}(n) =  
 \left\{
   \begin{array}{clc}
\mu_0(0)&  if &n = 1,\\
\displaystyle \sum_{y \neq  0} \mu_0(y) \mathbb P_y[ C_1=n-1]&  if &n \geq 2.\\  
   \end{array}
    \right.
\end{equation*}
Consequently, by Lemma \ref{MAJ+asym}, as $n \to +\infty$, 
\begin{align*}
\mu_{\bf t}(n)&=  \sum_{y \leq  -1} \mu_0(y)\  \mathbb P[ \tau^{S}(y)=n-1]+\sum_{y \geq  1} \mu_0(y)\  \mathbb P[ \tau^{S'}(y)=n-1] 
 \\
& \sim
\left(c \sum_{y \leq  -1} \mu_0(y) V_{*+}(\vert y\vert)+ c'\sum_{y \geq  1} \mu_0(y)\  V'_{*-}(y) 
\right) /n^{3/2}.\end{align*}  
By using \cite[Theorem B]{doney1997}, one gets  for any $n\geq 1$,
\begin{align*}
\mathbb P_0[X_n=0]&=\sum_{\ell=0}^{+\infty} \mathbb P_0[{\bf t}_\ell= n] 
\\
&\sim 
2 \left(c \sum_{y \leq  -1} \mu_0(y) V_{*+}(\vert y\vert)+ c'\sum_{y \geq  1} \mu_0(y) V'_{*-}(y)
\right) /\sqrt{n}.
\end{align*}
Similarly, if $y \neq 0$ (say for instance $y\leq -1$), one gets
\begin{align*}
\mathbb P_0[X_n=y]&=
 \sum_{k=0}^{n-1} \sum_{\ell=0}^{+\infty} \mathbb P_0[X_n= y, {\bf t}_\ell =k,   {\bf t}_{\ell+1}>n] 
\\
&=
\sum_{k=0}^{n-1} \underbrace{\left( \sum_{\ell=0}^{+\infty} \mathbb P_0[{\bf t}_\ell= k]\right)}_{ \sim c_1/\sqrt{k}} \underbrace{\mathbb P[ \tau^S(0) > n-k, S_{n-k}=y]}_{ \sim c_2(y)/(n-k)^{3/2}}
\\
& \sim c_3(y)/\sqrt{n}
\end{align*}
 for some strictly positive constants  $c_1, c_2(y)$ and $c_3(y)$ that can be computed explicitly.  
 
 For general oscillating random walks, the   change of regime between the three media  $\mathbb Z^{*-},
 \{0\}$ and $\mathbb Z^{*+}$ is taken into account by using the sequence of switching times  $(C_\ell)_{\ell \geq 0}$.  This sequence corresponds more or less to the sequence $({\bf t}_\ell)_{\ell \geq 0}$ when $DD'=-1$.
Unfortunately, when $DD'\neq -1$, the random variables $C_\ell-C_{\ell-1}, \ell \geq 1,$ are no longer independent and the above argument  fails.  Alternatively, we use an extension of Doney's theorem due by S. Gou\"ezel \cite{gouezel}, whose implementation is much more subtle.

{\bf From now on, we assume that $DD'\neq -1$.}

\subsection{On the  switching sequence \texorpdfstring{$(Q_n)_{n \geq 1}$}{Q} in the  cases   \texorpdfstring{${\bf (P, Z)}$}{PZ} and   \texorpdfstring{${\bf (Z, Z)}$}{ZZ}   }

  By (\ref{ switchingtime}), it is clear that $C_{\ell+1}= \tau^S(X_{C_\ell})$ when  $X_{C_\ell}\leq -1$ and $C_{\ell+1}=  \tau^{S'} (X_{C_\ell})$ when  $X_{C_\ell}\geq  1$. Consequently,  the  behavior as $n \to +\infty$ of  the quantities $ Q_n^{(\ell)}(x, y):= \mathbb{P}_x[C_\ell=n, X_n=y]$ for $\ell\geq 1$    is  closely related to the distributions of  $\tau^S$ and $\tau^{S'}$ when $x \neq 0$; in particular, by Lemma \ref{MAJ+asym}, its dependence on   $y$ is  expressed in terms of the  weakly and strictly renewal functions associated with $S$ and $S'$. This yields us  to choose a  suitable  Banach space on which the action of $Q$ has ``nice'' spectral properties - as compacity or quasi-compacity - and also    contains these renewal functions. 
  The fact that these functions   are   sublinear leads us  to examine the action of $Q$ on  the space $\mathcal B_{\Psi}$   of complex valued functions on $\mathbb Z$ defined by 
 $$  \mathcal B_{\Psi} :=\left\{f:\mathbb{Z} \to \mathbb{C}: \vert f \vert_{\mathcal B_\Psi}:=
  \sup_{x \in \mathbb Z}\frac{|f(x)|}{ \Psi(x)}  <+\infty 
 \right\},
 $$
 where $\Psi: \mathbb Z\to \mathbb R^+$ is a function such that $\displaystyle \lim_{x\to\pm \infty}{\Psi(x)\over 1+\vert x\vert}=+\infty$. 
 This condition ensures that the renewal functions $ V_+, V_{*+}, V_-, V_{*-}$  associated to  the random walk $S$, and the corresponding ones associated to   $S'$,  belong to $\mathcal B_\Psi$ and that  the    inclusion map $i: \mathbb L^\infty(\mathbb{Z}) \hookrightarrow {{\mathcal B_\Psi}}$ is compact which is a key point  in the sequel. 
 
 In the cases   $\bf (Z, Z)$, $\bf (P, Z)$ and $\bf (Z, P)$,  we only have polynomial moment assumptions and we fix the function $\Psi(x):= 1+\vert x\vert ^{1+\delta}$ with $\delta >0$. Let us explain briefly this choice.

By Lemma \ref{MAJ+asym}, the renewal functions   of $S$ and $S'$  belong to ${\mathcal B_\Psi}$ for any $\delta\geq 0$.  The same property holds for   the functions $  {\bf V}_{n,y}$ for $n \geq 1$ and $ y \in \mathbb Z$  defined  in \eqref{hny} and  \eqref{h'ny} ; furthermore,  for any $y \in \mathbb Z$, the sequence     $(n^{3/2}  {\bf V}_{n, y})_{n \geq 1}$    is bounded    in ${\mathcal B_\Psi}$. The condition $\delta>0$ is required to obtain the convergence of  this sequence    in ${\mathcal B_\Psi}$; its limit is $  {\bf V}_y$ defined  by $  {\bf V}_0=0$ and 

$\bullet$ for $y\leq -1$, 
\begin{equation}\label{hy}
  {\bf V}_{ y}: x \mapsto \left\{\begin{array}{cll}
\frac{1}{\sigma \sqrt{2\pi}  }  V_{*+}(\vert x\vert)V_-(\vert y\vert)
 & if & x \leq -1\\
 0&  if & x \geq 0
\end{array} 
\right.  \end{equation}

$\bullet$ for $y\geq  1$, 
\begin{equation}\label{h'y}
  {\bf V}_{ y}: x \mapsto \left\{\begin{array}{cll}
 0&  if & x \leq 0 
 \\
 \frac{1}{\sigma' \sqrt{2\pi}  }  V'_{*-}(  x )V'_+( y )
 & if & x \geq  1 
\end{array}
\right. .\end{equation}
For this reason, we assume from now on  $\delta>0$.   We also suppose in the sequel  that $\delta<\delta_0$, where  $\delta_0$ is given by the moment conditions.

 The  case  $\bf (Z, Z)$  was  well studied in \cite{peignevo}, where it was proved that  the operator  $Q$  acts on ${\mathcal B_\Psi}$ as a markovian compact operator. In the following proposition, we make precise this statement  and extend it to the case ${\bf (P, Z)}$.

\begin{prop} \label{QZZ}  \underline{Cases ${\bf (Z, Z)}$ and ${\bf (P, Z)}$} 

Assume hypotheses {\bf O1--O4}  and moment conditions $\bf (Z, Z)$ and $\bf (P, Z)$. Set   $\Psi(x)= 1+\vert x\vert ^\delta$ where $0<\delta <\delta_0$. Then the following statements are true.
\begin{enumerate}[(i) ]
    \item the map $Q$ acts on ${{\mathcal B_\Psi}}$ and $Q({{\mathcal B_\Psi}})$ is included in the space  $\mathbb L^\infty(\mathbb{Z})$ of  bounded functions on $\mathbb Z$ endowed with the norm $\vert \cdot \vert_\infty$.
    \item  $Q$ is a compact operator on ${{\mathcal B_\Psi}}$ with spectral radius $\rho_{\mathcal B_\Psi}  =1$ and with the unique and simple dominant eigenvalue $1$ (the rest of  the spectrum of $Q$   on ${\mathcal B_\Psi}$ is contained in a disk of radius strictly less than $1$);
      \item the spectral radius $\rho_{\mathcal B_\Psi}(z)$ of $Q(z):= \sum_{n \geq 1} Q_n z^n$ is strictly less than $1$ for $z \in \overline{\mathbb{D}}\setminus\{1\}$.  
\end{enumerate}
 Consequently, the operator $Q$ on ${\mathcal B_\Psi}$ may be decomposed as
 $
Q= \Pi_Q+ Q'
 $
where 
\\
\indent $\bullet$ \ \ $\Pi_Q$ is the eigenprojector from ${\mathcal B_\Psi}$ to $\mathbb C \bf{1}$ corresponding to the eigenvalue $1$ and $\Pi_Q(\varphi)= \nu_Q(\varphi) \mathbf{1}$, where $\nu_Q$ is the unique $Q$-invariant probability measure on $\mathbb Z$;

\indent $\bullet$ \ \ the spectral radius of $Q'$  on ${\mathcal B_\Psi}$ is strictly less than $ 1$; 

 $\bullet$ \ \ $\Pi_Q Q'= Q'\Pi_Q = 0$.
 
\noindent  Furthermore, for each $n\geq 1,$ 
 the following properties of the sequence $(Q_n)_{n\geq 1}$ hold.
\begin{enumerate}[(a)]
\item The operator $Q_n$ acts on $\mathcal B_\Psi$ and  its induced norm $ \Vert Q_n \Vert_{\mathcal B_\Psi}$ in $\mathcal L(\mathcal B_\Psi)$ satisfies 
 $\displaystyle \Vert Q_n \Vert_{\mathcal B_\Psi} = O\left(\dfrac{1}{n^{3/2}}\right).$ 
      \item For any $n\geq 1$, it holds that  $\Pi_Q Q_n \Pi_Q= r_n \Pi_Q $ where
$\displaystyle 
    r_n:=\nu_Q(Q_n {\bf 1})=\displaystyle \sum_{x\in \mathbb{Z}}\nu_Q(x)\mathbb{P}_x[\mathcal \mathcal  C_1=n]\geq 0. 
$  Furthermore,   
$\displaystyle 
\sum_{j>n}r_j   \thicksim 
\left\{
\begin{array}{cl}
\displaystyle \frac{ 2(c\nu_Q(\check{V}_{*+})+c'\nu_Q(V'_{*-})) }{\sqrt{n}} & in \ the \ case \ {\bf(Z, Z)},
\\ 
\displaystyle \frac{2c'\nu_Q(V'_{*-})}{\sqrt{n}}  & in \ the \ case \ {\bf(P, Z)},
\end{array}
\right.
$
 where $\check{V}_{*+}$ denotes the function $x \mapsto V_{*+}(-x)$ for any $x \leq 0$. 
\item The sequence $(n^{3/2} Q_n)_{n \geq 1}$ converges in $ (\mathcal  L( \mathcal B_\Psi), \Vert . \Vert_{\mathcal{B}_\Psi}) $  to the operator $\mathcal E$  whose transition matrix is given by: for any $x, y \in \mathbb Z$,

$\bullet$ in    the   case   {\bf(Z, Z)},
\[
  \mathcal E(x, y) = 
\left\{
\begin{array}{clc}
  \displaystyle \frac{1}{\sigma\sqrt{2\pi}} h _{*+}(|x|)   \sum_{w \geq 1} V_-(w) \mu( w+y)
 & if\ \bigl( x\leq   -1\ and \ y\geq 0\bigr),
\\
\displaystyle 0
 & if\  \bigl(x= 0  \ and \ \ y\neq 0\bigr),
\\
    \displaystyle\frac{1}{\sigma'\sqrt{2\pi}} V'_{*-}(x)  \sum_{w \geq 1} V'_+(w) \mu'( -w+y) 
 & if\  \bigl(x\geq  1 \ and \ \ y\leq 0\bigr). 
\end{array}
\right.
\]
$\bullet$ in    the   case   {\bf(P, Z)},
\[
  \mathcal E(x, y) = 
\left\{
\begin{array}{cll}
  0  & if\  \bigl(x\leq -1\ and \  y\geq 0\bigr) \   or  \bigl(x=0\ and \ y \neq 0\bigr),
  \\
   \displaystyle \frac{1}{\sigma'\sqrt{2\pi}} V'_{*-}(x)  \sum_{w \geq 1} V'_+(w) \mu'( -w-y)
 & if\  \bigl(x\geq  1 \ and \ y\leq 0\bigr). 
\end{array}
\right.
\] 
\end{enumerate}

\end{prop}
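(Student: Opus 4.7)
My plan is to obtain parts (i)--(iii) and the decomposition from the norm estimate (a) combined with Riesz--Schauder theory, and then to derive (b) and (c) by plugging the local limit estimates of Lemmas~\ref{MAJ+asym} and~\ref{estimate drift} into a dominated convergence argument on $\mathcal B_\Psi$.

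I would first attack (a). For $x\le -1$ in case~${\bf(Z,Z)}$, Lemma~\ref{MAJ+asym}(d) gives
\[
\sum_{y\ge 0}Q_n(x,y)\,\Psi(y)\ \preceq\ \frac{1+|x|}{n^{3/2}}\sum_{y\ge 0}\Psi(y)\sum_{z>y}z\,\mu(z)\ \preceq\ \frac{1+|x|}{n^{3/2}}\,\mathbb E\bigl[(\xi_1^+)^{2}\,\Psi(\xi_1^+)\bigr],
\]
which is finite under $\mathbb E[(\xi_1^+)^{3+\delta_0}]<+\infty$ once $\delta<\delta_0$. A symmetric estimate using \eqref{functiong} handles $x\ge 1$, while $Q_n(0,\cdot)\le\mu_0(0)^{n-1}$ decays exponentially. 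Dividing by $\Psi(x)$, which dominates $1+|x|$, one obtains $\|Q_n\|_{\mathcal B_\Psi}=O(n^{-3/2})$. In case~${\bf(P,Z)}$ the same bound works for $x\ge 1$; for $x\le -1$, the sharper estimate~\eqref{tsejvhtb} of Lemma~\ref{estimate drift} (applied with $\varepsilon\in(0,\delta)$) yields the strictly smaller $O(n^{-3/2-\varepsilon})$ contribution.

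Summing (a) gives $\sum_n\|Q_n\|_{\mathcal B_\Psi}<+\infty$, so $Q=\sum_nQ_n$ is bounded on $\mathcal B_\Psi$; the same computation at $n=\infty$ yields $\sup_x\mathbb E_x[\Psi(X_{C_1})]<+\infty$, proving (i). Since $\Psi(x)\to+\infty$, the inclusion $L^\infty(\mathbb Z)\hookrightarrow\mathcal B_\Psi$ is compact by the standard diagonal extraction, so $Q$ is compact on $\mathcal B_\Psi$. The spectral claim in (ii) then follows from Riesz--Schauder together with the irreducibility and aperiodicity of $\mathcal X_{\bf C}$ on its unique essential class $\mathcal I_{\bf C}$ (see the discussion following~\eqref{Qswitchingtime-finite2}): $1$ is a simple eigenvalue with eigenfunction $\mathbf 1$ and invariant probability $\nu_Q$, and the rest of the spectrum lies in a disk of radius strictly less than $1$. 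Part~(iii) follows analogously: $Q(z)$ is compact by the summable norm bound, and any unimodular eigenvalue for $z\in\overline{\mathbb D}\setminus\{1\}$ would force a coboundary relation incompatible with the aperiodicity of $\mu$ and $\mu'$. The decomposition $Q=\Pi_Q+Q'$ is the corresponding Riesz spectral decomposition. For (b), the identity $r_n=\nu_Q(Q_n\mathbf 1)$ is immediate from $\Pi_Qf=\nu_Q(f)\mathbf 1$, and $\sum_{j>n}r_j=\sum_x\nu_Q(x)\mathbb P_x[C_1>n]$ is handled by Lemma~\ref{MAJ+asym}(a) and~\eqref{jvzhgqf}, with~\eqref{tauS>n} killing the left-side contribution in case~${\bf(P,Z)}$; compactness gives $\nu_Q\in\mathcal B_\Psi^*$, so $V_{*+},V'_{*-}\in\mathcal B_\Psi$ are $\nu_Q$-integrable and dominated convergence applies.

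The core step is (c). Pointwise convergence $n^{3/2}Q_n(x,y)\to\mathcal E(x,y)$ is exactly the sharp part of Lemma~\ref{MAJ+asym}(d), of~\eqref{functiong}, and (in case~${\bf(P,Z)}$) of the fact that $n^{3/2}$ times~\eqref{tsejvhtb} tends to $0$ on the left side. The main obstacle is upgrading this to convergence of $n^{3/2}Q_n$ to $\mathcal E$ in the operator norm on $\mathcal B_\Psi$, uniformly in the starting point $x$. My plan is to split $\sum_y$ into a finite window $|y|\le N$, on which pointwise convergence is uniform and thus delivers the limit, and a tail $|y|>N$, on which the summable majorant $g(y)$ produced in (a) yields
\[
\sup_{x}\frac{1}{\Psi(x)}\sum_{|y|>N}\bigl(n^{3/2}Q_n(x,y)+\mathcal E(x,y)\bigr)\Psi(y)\ \preceq\ \sum_{|y|>N}g(y)\xrightarrow[N\to\infty]{}0.
\]
Letting $n\to\infty$ first and then $N\to\infty$ gives the desired operator norm convergence. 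The delicate point is the tail control in $y$, which is exactly where the moment conditions $\mathbb E[(\xi_1^+)^{3+\delta_0}],\mathbb E[({\xi_1'}^-)^{3+\delta_0}]<+\infty$ with $\delta<\delta_0$ are used in a tight way.
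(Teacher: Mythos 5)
Your treatment of parts (a), (b) and (c) follows essentially the paper's route (Lemma \ref{MAJ+asym}, \eqref{functiong}, \eqref{tsejvhtb}, then dominated convergence with a summable majorant in $y$), but there is a genuine gap at the very start, in your proof of (i), and it propagates to (ii). Summing the bounds of Lemma \ref{MAJ+asym}(d) over $n$ only yields $|Q\varphi(x)|\preceq (1+|x|)\,|\varphi|_{\mathcal B_\Psi}$, i.e.\ $\mathbb E_x[\Psi(X_{C_1})]\preceq 1+|x|$; it does \emph{not} yield $\sup_x\mathbb E_x[\Psi(X_{C_1})]<+\infty$, so your claim that ``the same computation at $n=\infty$'' proves $Q(\mathcal B_\Psi)\subset \mathbb L^\infty(\mathbb Z)$ is unjustified (the paper even flags exactly this pitfall in a footnote to its proof). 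This uniform-in-$x$ bound is not a cosmetic point: it is what makes the iterates uniformly bounded, via $|Q^n\varphi|_\infty\leq |Q\varphi|_\infty$ for the Markovian $Q$, and hence gives $\rho_{\mathcal B_\Psi}\leq 1$; a compact Markov operator on a weighted space with growing weight can perfectly well have spectral radius larger than $1$, so compactness plus stochasticity alone does not give (ii). It is also the ingredient you invoke for compactness through the compact embedding $\mathbb L^\infty(\mathbb Z)\hookrightarrow\mathcal B_\Psi$ (compactness could be salvaged from the weaker $O(1+|x|)$ bound by factoring through the weight $1+|x|$, but the bounded-iterates argument could not).

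The paper obtains the uniform bound from the ladder-height representation \eqref{Qswitchingtime-finite2}: for $x\leq -1$, $\sum_{t=0}^{|x|-1}\mu_{*+}(|x|+y-t)\,U_{*+}(t)\leq \mu_{*+}[y,+\infty)$ (the renewal masses $U_{*+}(t)$ are at most $1$ and the sum over $t$ telescopes into a tail that is independent of $x$), whence $|Q\varphi(x)|\leq |\varphi|_{\mathcal B_\Psi}\bigl(\mathbb E[S_{\tau^S_{*+}}]+\mathbb E[(S_{\tau^S_{*+}})^{2+\delta}]\bigr)$, the moments of the ladder height being finite by Chow--Lai in the centered case and by Gut's theorem in the drift case (and symmetrically for $x\geq 1$ via $S'_{\tau^{S'}_{*-}}$). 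You need this (or an equivalent uniform overshoot estimate) before you can run your Riesz--Schauder and bounded-powers argument; once it is in place, the rest of your sketch matches the paper's proof.
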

Assertion $(c)$ is not used in the proof of Theorem \ref{theo1} but it is useful in sections \ref{sectiontransientZP} and \ref {sectiontransientNPPP}. 
\begin{proof}  For the sake of completeness, we  detail  here the proof  developed in  \cite[Proposition 3.5, Lemma 3.7 and Proposition 3.8]{peignevo} in the case $({\bf Z, Z})$ and explain how to extend it  to the case  $({\bf P, Z})$.   

 \begin{enumerate}[$(i)$]
     \item  
  By formula \eqref{Qswitchingtime-finite2}, for any $\varphi \in {{\mathcal B_\Psi}}$ and $x \leq -1$, it holds that   
  \begin{align}\label{bound} 
         |Q\varphi(x)| \leq \ \sum_{ y\geq 0}   \sum_{t = 0}^{\vert x\vert-1 } \mu_{*+}(\vert x\vert+y-t)\, \vert  \varphi(y)|
         &\leq  \vert  \varphi \vert_{\mathcal B_\Psi}  \sum_{ y\geq 0}(1+ y^{1+\delta})  \, \mu_{*+}[ y, +\infty )\notag\\ 
         &\leq \vert  \varphi \vert_{\mathcal B_\Psi} \left(\mathbb{E}[S_{\tau^S_{*+}} ] + \mathbb{E}[ (S_{\tau^S_{*+}}) ^{2+\delta}]\right).
     \end{align}
     The same inequality holds  when $x\geq 1$, replacing $ S_{\tau^S_{*+}}$ by  $S'_{\tau^{S'}_{*-}}$.
     \footnote{Notice that, by using Lemma  \ref{MAJ+asym}, we   obtain
     $\displaystyle  |Q\varphi(x)| \preceq \vert  \varphi \vert_{\mathcal B_\Psi} \vert x\vert \mathbb E[(\xi_1^+)^{3+\delta}]$ which is not sufficient to establish that $Q$ acts continuously from $\mathcal B_\Psi$ to $\mathbb L^\infty(\mathbb Z)$; this property   is of major importance  to prove that $Q$ has bounded powers on $\mathcal B_\Psi$ (see part $(ii)$ of the present proof).}
     
     At last $\displaystyle  |Q\varphi(0)| \leq  \vert  \varphi \vert_{\mathcal B_\Psi}  {\mathbb E[1+ \vert \eta_1\vert^{1+\delta}]\over 1-\mu_0(0)}$. 
By  \cite[Theorem 1]{ChowLai}, the quantities $\mathbb{E}[(S_{\tau^S_{*+}})^{2+\delta}]$  and $\mathbb{E}[\vert S'_{\tau^{S'}_{*-}} \vert^{2+\delta}]$ are both finite   in the case $({\bf Z, Z})$  since $\mathbb{E}[(\xi ^+_n)^{3+\delta}],\mathbb{E}[(\xi'^-_n)^{3+\delta}]<+\infty$;    in the   case $({\bf P, Z})$,   this property also  holds  for $\mathbb{E}[(S_{\tau^S_{*+}})^{2+\delta}]$   by \cite[Theorem 3.1]{gut}.        
    
     \item By \eqref{bound}, the operator $Q$ is bounded from  ${{\mathcal B_\Psi}}$ into $\mathbb L^\infty(\mathbb{Z})$; as the inclusion map $i: \mathbb L^\infty(\mathbb{Z}) \hookrightarrow {{\mathcal B_\Psi}}$ is compact, the operator $Q$ is also compact on ${{\mathcal B_\Psi}}$.

        Let us compute the spectral radius $ \rho_{\mathcal B_\Psi}$ of $Q$. The fact that $Q$ is a stochastic matrix yields $ \rho_{\mathcal B_\Psi} \geq 1$. To prove $ \rho_{\mathcal B_\Psi}\leq 1$,  it suffices to show that the sequence of iterates $(Q^n)_{n \geq 0}$ is bounded in  ${{\mathcal B_\Psi}}$. For any $n \geq 1, \varphi \in \mathcal B_\Psi$ and $x \in \mathbb{Z}$,
    \begin{align*}
        \vert Q^{n}\varphi(x) \vert \leq \displaystyle \sum_{y\in \mathbb{Z}} Q^{n-1}(x,y) \vert Q\varphi(y) \vert \leq \vert Q\varphi \vert_{\infty} \displaystyle \sum_{y\in \mathbb{Z}} Q^{n-1}(x,y) =\vert Q\varphi \vert_{\infty}.
    \end{align*}
Together with \eqref{bound}, it implies
    \begin{align*}
        \vert Q^{n} \varphi \vert_{\mathcal B_\Psi}  \leq \vert Q^n \varphi \vert _\infty \leq |Q\varphi|_\infty \leq   \vert  \varphi \vert_{\mathcal B_\Psi}\left(2 \left(\mathbb E[(S _{\tau^S_{*+}})^{2+\delta}] +
         \mathbb E[\vert S'_{\tau^{S'}_{*-}}\vert ^{2+\delta}]  \right)+{\mathbb E[1+ \vert \eta_1\vert^{1+\delta}]\over 1-\mu_0(0)}\right) .
    \end{align*}
Hence $\Vert Q^n\Vert_{ \mathcal B_\Psi} \leq 2  \left(\mathbb E[(S _{\tau^S_{*+}})^{2+\delta}] +
         \mathbb E[\vert S'_{\tau^{S'}_{*-}}\vert ^{2+\delta}] \right)+{\mathbb E[1+ \vert \eta_1\vert^{1+\delta}]\over 1-\mu_0(0)} $ for any $n \geq 1$ and so 
 $ \rho_{\mathcal B_\Psi} \leq 1.$  Finally $\rho_{\mathcal B_\Psi}=1$.
 
Let us control the peripheral spectrum of $Q$. Let $\theta \in \mathbb{R}$ and $\psi \in {{\mathcal B_\Psi}}$ such that $Q\psi = e^{i\theta}\psi$. Since $Q(\mathcal B_\Psi) \subset \mathbb L^\infty(\mathbb{Z})$, the function $\psi$ is bounded and $\vert \psi \vert \leq Q \vert \psi \vert$.  Consequently,  $\vert \psi \vert_{\infty} -\vert \psi \vert$ is  non negative and  superharmonic    (ie  $Q(\vert \psi \vert_{\infty} -\vert \psi \vert) \leq \vert \psi \vert_{\infty} -\vert \psi \vert$)  on the unique essential class  $\mathcal{I}_{\bf C}$ of $\mathcal X$,  which is recurrent since $\mathbb E[\xi_1]\geq 0$ and $\mathbb E[\xi'_1]=0$. By the classical denumerable Markov chains theory, it is thus constant on $\mathcal{I}_{\bf C}$ which implies that $\vert \psi\vert$ is constant on $\mathcal{I}_{\bf C}$.

Without loss of generality, we may assume that $\vert \psi(x) \vert =1$ for any $x \in \mathcal{I}_{\bf C}$,  ie   $\psi(x) = e^{i\phi(x)}$ for some $\phi(x) \in \mathbb{R}$. The equation $Q\psi = e^{i\theta}\psi$ can be written as 
    \begin{align*}
 \forall x \in \mathcal{I}_{\bf C}, \quad  \sum_{y \in {\mathcal{I}_{\bf C}}} Q(x,y) e^{i(\phi(y) - \phi(x))} = e^{i\theta}. 
    \end{align*}
 By convexity, one readily gets $e^{i\theta} =e^{i(\phi(y) - \phi(x))}$ for all points $x, y \in \mathcal I_{\bf C}$ such that $Q(x, y)>0$. We fix two points $x_1, x_2  \in \mathcal{I}_{\bf C}\setminus\{0\}$. Since  $Q(x_1, 0), Q(x_2, 0) >0$, it holds $e^{i \phi(x_1)} = e^{i (\phi(0)-\theta)}= e^{i \phi(x_2)}$, hence $e^{i\phi} $ is constant on $\mathcal{I}_{\bf C}\setminus\{0\}$.  By hypothesis {\bf O3}, we obtain  $D\geq 2$   or $D'\leq -2$. Assume for instance $D\geq 2$ and  fix $y_0 \in [1, \ldots, D[$;  the points $-1$ and $y_0$ belong to $\mathcal I_{\bf C}\setminus\{0\}$, hence $e^{i \theta} =  e^{i(\phi(y_0)-\phi(-1))} =1$. Therefore,  the function $\psi$ is harmonic on $\mathcal{I}_{\bf C}$ and  by Liouville's theorem, it is constant on this set. 
   
For any $x \in \mathbb Z$, since   $\left(Q(x,y)>0 \Longrightarrow y \in \mathcal{I}_{\bf C}\right)$,   we obtain  
 $
       \psi(x)=Q\psi(x)= \displaystyle \sum_{y \in {\mathcal{I}_{\bf C}}} Q(x,y)\psi(y)= \psi(y_0), 
 $
therefore  $\psi$ is constant on $\mathbb{Z}$. 
 \item 
 The argument  is close to the one above. For any $z\in \overline{\mathbb{D}}\setminus\{1\}$, the operator $Q(z)$ is compact on ${{\mathcal B_\Psi}}$ with spectral radius $ \rho_{\mathcal B_\Psi}(z)\leq 1$. We prove that $ \rho_{\mathcal B_\Psi}(z) \neq 1$ by contraposition.  Suppose $\rho_{{{\mathcal B_\Psi}}}(z)=1$; in other words, there exist $\theta \in \mathbb{R}$ and $\psi \in {{\mathcal B_\Psi}}$ such that $Q(z)\psi= e^{i\theta} \psi$. Since $Q$ is bounded from ${{\mathcal B_\Psi}}$ into $\mathbb L^\infty(\mathbb{Z})$ and $0\leq \vert  \psi| \leq Q\vert  \psi|$. As above, we conclude that  $\vert  \psi|$ is  constant on  $\mathcal{I}_{\bf C}$; we may assume  
 $\vert  \psi(x)|=1$ for any  $x \in \mathcal{I}_{\bf C}$, ie $\psi(x)=e^{i\phi(x)}$  for  some function  $\phi:\mathcal{I}_{\bf C} \to \mathbb R$. For any $x \in \mathcal{I}_{\bf C}$, we get
     \begin{align*}
         Q(z)\psi(x)= e^{i\theta}\psi(x) &\Longleftrightarrow   \sum_{n\geq 1}   \sum_{y \in \mathcal{I}_{\bf C}} z^n e^{i(\phi(y)-\phi(x))} \mathbb{P}_x[C_1=n; X_n=y]=e^{i\theta} 
     \end{align*}
with $ \displaystyle \sum_{n\geq 1}   \sum_{y \in \mathcal{I}_{\bf C}} \mathbb{P}_x[C_1=n; X_n=y]=1$.\\
 By convexity, we obtain  $z^n e^{i(\phi(y)-\phi(x))}=e^{i\theta}$ for all $x, y \in \mathcal{I}_{\bf C}$ and $n \geq 1$ such that $Q_n(x, y)>0$. Fix $x_1 \in \mathcal I_{\bf C}$ such that $x_1 \leq -1$;  since  the measure $\mu$ is  aperiodic, there exists  $n \geq 1$ such that $Q_n(x_1, 0)  Q_{n+1}(x_1, 0) >0$ so that  $z^n e^{i(\phi(0)-\phi(x_1))}=z^{n+1} e^{i(\phi(0)-\phi(x_1))}=e^{i\theta}$. Consequently 
  $z=1$. Contradiction. 
  \end{enumerate}

Let us  focus  on the sequence $(Q_n)_{n \geq 1}$.
  \begin{enumerate}[(a)]
  \item    When $x \leq -1$, one gets
  
   $\bullet$ in the case $\bf (Z, Z)$,  by using Lemma \ref{MAJ+asym}$(d)$,
\begin{align*}
 \vert Q_n\varphi(x) \vert   
      &=   \sum_{y \geq 0}\mathbb{P}[\tau^S(x)=n, x+S_n=y]\, |\varphi(y)|\\
      &\preceq {1+\vert x\vert\over n^{3/2}}\left(\sum_{y \geq 0} \sum_{z > y} z \mu(z) \right)  |\varphi(y)|\\
      & \preceq \frac{1+\vert x\vert}{n^{3/2}}\vert \varphi\vert_{\mathcal B_\Psi} \underbrace{\sum_{y \geq 0} (1+y^{1+\delta})\left(\sum_{z > y} z \mu(z)\right)}_{\preceq  \  \sum_{z \geq 1}z^{3+\delta}\mu(z)\ =\  \mathbb E[(\xi_1^+)^{3+\delta}]}.
  \end{align*}
  
    $\bullet$ in the case $\bf (P, Z)$,  since  $\delta \in ]0, \delta_0[$, by \eqref{tsejvhtb},  
          \begin{align*}
 \vert Q_n\varphi(x) \vert  &\leq  \sum_{y \geq  0} \mathbb{P}_{x}[ C_1=n; X_n=y]\, |\varphi(y)|\\ 
      &=   \sum_{y \geq 0}\mathbb{P}[\tau^S(x)=n, x+S_n=y]\, |\varphi(y)|\\
      &\leq C_\delta \frac{(1+\vert x\vert)^{1+\delta}}{ n^{\frac{3}{2}+\delta}} \vert \varphi\vert_{\mathcal B_\Psi}
      \underbrace{\sum_{y \geq 0} (1+\vert y\vert^{1+\delta})\mu(y, +\infty)}_{\preceq  \ \mathbb E[ (\xi_1^+)^{2+\delta}]\ <+\infty}.        \end{align*}

Similar inequalities hold   for both cases $\bf (Z, Z)$ and $\bf (P, Z)$ when $x\geq 0$.
This completes the proof.
\item For any $\varphi \in {{\mathcal B_\Psi}}$ and $n\geq 1$,
 $$
         \Pi_Q Q_n \Pi_Q (\varphi)= \nu_Q(\varphi)\Pi_Q(Q_n \mathbf{1})  = \nu_Q(\varphi)    \left(\sum_{x\in \mathbb{Z}} \nu_Q(x) \mathbb{P}_x[C_1=n]\right){\bf 1}  = r_n\Pi_Q(\varphi) 
 $$
     with $r_n=  \displaystyle  \sum_{x\in \mathbb{Z}} \nu_Q(x) \mathbb{P}_x[C_1=n]\geq 0$.    Consequently, 
     
$\bullet$ in the case $\bf (P, Z)$,  
\begin{align*}
         \displaystyle \sum_{j>n} r_j &= \displaystyle \sum_{j>n}  \displaystyle \sum_{x\in \mathbb{Z}} \nu_Q(x) \mathbb{P}_x[C_1=j]\\
         &= \displaystyle \sum_{x\in \mathbb{Z}} \nu_Q(x) \mathbb{P}_x[C_1>n]\\
         &=\displaystyle \sum_{ x \leq -1 } \nu_Q(x) \mathbb{P}[\tau^S(x)>n]+  \sum_{x \geq  1} \nu_Q(x) \mathbb{P}[\tau^{S'}(x)>n]+ \nu_Q(0) \mu_0(0)^n\\
         &\sim \frac{2c'}{\sqrt{n}} \displaystyle \sum_{x \geq 1}\nu_Q(x) V'_{*-}(x) \quad {\rm by} \ \eqref{jvzhgqf} \ {\rm and} \ \eqref{tauS>n}\\
         &=\frac{2c' \ \nu_Q(V'_{*-})}{\sqrt{n}};
     \end{align*} 
   We use here the fact that $0<\nu_Q(V'_{*-})<+\infty$. Indeed, the eigenprojector $\Pi_Q$ acts on ${\mathcal B_\Psi}$ and $ V'_{*-} \in {\mathcal B_\Psi}$, hence    $\nu_Q(V'_{*-}) <+\infty$. On the other hand, the support $\mathcal{I}_{\bf C}$ of $\nu_Q$ intersects $\mathbb Z^{*-}$ and the support of $V'_{*-}$ equals $\mathbb Z^{*-}$;  hence $\nu_Q(V'_{*-})>0$.

 $\bullet$ in the case $\bf (Z, Z)$,  by using  Lemma \ref{MAJ+asym}$(a)$ again,
     \begin{align*}
        \sum_{j>n}r_j 
         &=\displaystyle \sum_{ x \leq -1 } \nu_Q(x) \mathbb{P}[\tau^S(x)>n]+   \sum_{x \geq 1}\nu_Q(x) \mathbb{P}[\tau^{S'}(x)>n] + \nu_Q(0) \mu_0(0)^n\\
         &\thicksim\frac{2c}{\sqrt{n}} \displaystyle \sum_{x \leq -1}\nu_Q(x) V_{*+}( \vert x\vert) +\frac{2c'}{\sqrt{n}} \displaystyle \sum_{x \geq 1}\nu_Q(x) V'_{*-}(x)=\ \frac{2(c\  \nu_Q({\check{V}_{*+}}) + c' \ \nu_Q(V'_{*-}))}{\sqrt{n}} 
     \end{align*} 
 with $0<    \nu_Q({\check{V}_{*+}}),  \nu_Q(V'_{*-})<+\infty$.

\item  Lemmas \ref{MAJ+asym} and \ref{estimate drift} yield the formulae for $\mathcal E(x, y)$.  Furthermore,  when $x\leq -1$ and $y\geq 0$, it holds

$\bullet$ in the case $\bf (Z, Z)$ \quad    $n^{3/2} Q_n(x, y) + \mathcal E(x, y) \preceq (1+\vert x\vert) \sum_{z>y} z\mu(z) \preceq  {1+|x|\over 1+y^{2+\delta_0}} \mathbb E[(\xi_1^+)^{3+\delta_0}].$

$\bullet$ in the case $\bf (P, Z)$ \quad    $n^{3/2} Q_n(x, y) + \mathcal E(x, y) \preceq (1+\vert x\vert) \sum_{z>y}  \mu(z) \preceq {1+\vert x\vert\over 1+y^{2+\delta_0}}\mathbb E[(\xi_1^+)^{2+\delta_0}],$

\noindent Similarly, when  $x\geq 1$ and $y\leq 0$, in both cases $\bf (Z, Z)$ and $\bf (P, Z)$, 
$$n^{3/2} Q_n(x, y) + \mathcal E(x, y) \preceq (1+x) \sum_{z<y} \vert z\vert \mu'(z) \preceq {1+x\over 1+\vert y\vert ^{2+\delta_0}}\mathbb E[({\xi_1'}^-)^{3+\delta_0}].
$$
At last,  for $y \neq 0$, $\displaystyle
Q_n(0, y)=\mu_0(0)^n \mu_0(y). 
$ Consequently, 
$$
\sup_{x \in \mathbb Z} {n^{3/2}Q_n(x, y) + \mathcal E(x, y) \over 1+\vert x\vert ^{1+\delta}}(1+\vert y\vert ^{1+\delta})\preceq {1\over 1+\vert y\vert^{1+\delta_0-\delta}}+ (1+\vert y\vert ^{1+\delta})\mu_0(y),
$$
where the right   side term is summable with respect to $y$. 
Since $n^{3/2}Q_n(x, y) \rightarrow \mathcal E(x, y)$ for any $x, y\in \mathbb Z$   and $0<\delta< \delta_0$, the dominated convergence theorem yields   $n^{3/2}Q_n  \rightarrow \mathcal E $  in $\mathcal L(\mathcal B_\Psi)$. 
 \end{enumerate}
\end{proof} 
Proposition \ref{QZZ} implies that  in the  cases ${\bf (P, Z)}$ and ${\bf (Z, Z)} $, the sequence $(Q_n)_{n \geq 1} $ is an aperiodic renewal sequence of operators on ${\mathcal B_\Psi}$ satisfying  {\it R4}$({L}, \beta)$  and  {\it R5}$({L}, \beta)$  with $\beta = 1/2$ and ${L}$ being constant. Hence, by using \cite[Theorem 1.4]{gouezel}, we obtain the following corollary, with   $\displaystyle T_n =  \sum_{\ell=1}^{+\infty}Q_n^{(\ell)}  $ introduced in \eqref{azevb}. 
 \begin{coro} \label{convergeH} 
The sequence $(\sqrt{n} T_n)_{n \geq 1}$ converges in $(\mathcal L({\mathcal B_\Psi}), \Vert \cdot \Vert_{\mathcal B_\Psi})$ to   the operator  ${\bf c}^{-1}\Pi_Q$ with   $\displaystyle  {\bf c}={ \pi\bigl(2c\nu_Q( \check{V}_{*+})+2c'\nu_Q(V'_{*-})\bigr)}$ in the case  ${\bf (Z, Z)} $  and 
 $\displaystyle  {\bf c}={\pi (2c')\nu_Q(V'_{*-})}$ in the case ${\bf (P, Z)}$.  As a consequence,  
for any $x, y \in \mathbb Z$,
$$\displaystyle \lim_{n \to +\infty} \sqrt{n}  T_n(x, y)={\bf c}^{-1}\nu_Q(y).
$$ 
\end{coro}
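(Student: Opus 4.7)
The plan is to recognize this Corollary as a direct consequence of Theorem \ref{theogouezel} applied to the sequence $(Q_n)_{n\geq 1}$ on the Banach space $\mathcal B_\Psi$, once one has identified the correct slowly varying function $L$ and the correct exponent $\beta$. All the input hypotheses have essentially been assembled in Proposition \ref{QZZ}; it remains to match the notation of the abstract theorem and extract the pointwise statement at the end.

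First I would verify that $(Q_n)_{n\geq 1}$ is an aperiodic renewal sequence of operators on $\mathcal B_\Psi$ in the sense of Section 3.3. The summability $\sum_n \Vert Q_n\Vert_{\mathcal B_\Psi}<+\infty$ follows from Proposition \ref{QZZ}(a). Conditions \emph{R1} and \emph{R2} are exactly items $(ii)$ and $(iii)$ of Proposition \ref{QZZ}, giving respectively the simplicity of the dominant eigenvalue $1$ for $Q=Q(1)$ and the strict contraction of the spectral radius of $Q(z)$ for $z\in\overline{\mathbb D}\setminus\{1\}$. Condition \emph{R3}, namely $r_n\geq 0$, is checked in Proposition \ref{QZZ}(b), where $r_n=\nu_Q(Q_n\mathbf 1)=\sum_x\nu_Q(x)\mathbb P_x[C_1=n]$ is manifestly non-negative.

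Next I would choose $\beta=1/2$ and fix the constant function
\[
L\equiv\begin{cases} 2\bigl(c\,\nu_Q(\check V_{*+})+c'\,\nu_Q(V'_{*-})\bigr) & \text{in case }(\mathbf{Z,Z}),\\[1mm]
2c'\,\nu_Q(V'_{*-}) & \text{in case }(\mathbf{P,Z}).\end{cases}
\]
With this choice, condition \emph{R5}($L,\beta$) is precisely the tail asymptotic established in Proposition \ref{QZZ}(b), while condition \emph{R4}($L,\beta$) is Proposition \ref{QZZ}(a), since $L$ is a strictly positive constant. Theorem \ref{theogouezel} then yields convergence of $\bigl(n^{1-\beta}L(n)\mathcal T_n\bigr)_{n\geq 1}=\bigl(L\sqrt{n}\,T_n\bigr)_{n\geq 1}$ to $d_\beta\Pi_Q=\tfrac{1}{\pi}\Pi_Q$ in $(\mathcal L(\mathcal B_\Psi),\Vert\cdot\Vert_{\mathcal B_\Psi})$, because $d_{1/2}=\tfrac{1}{\pi}\sin(\pi/2)=\tfrac{1}{\pi}$. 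Dividing by $L$, one obtains $\sqrt{n}\,T_n\to \tfrac{1}{\pi L}\Pi_Q={\bf c}^{-1}\Pi_Q$ with ${\bf c}=\pi L$, matching both announced values of ${\bf c}$.

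Finally I would extract the pointwise statement. Since $\mathbf 1_{\{y\}}\in\mathcal B_\Psi$ and $\Pi_Q\varphi=\nu_Q(\varphi)\mathbf 1$, one has $\Pi_Q\mathbf 1_{\{y\}}=\nu_Q(y)\mathbf 1$. Operator norm convergence in $\mathcal B_\Psi$ implies $\Vert \sqrt{n}\,T_n\mathbf 1_{\{y\}}-{\bf c}^{-1}\nu_Q(y)\mathbf 1\Vert_{\mathcal B_\Psi}\to 0$, which gives, at each fixed $x\in\mathbb Z$, the pointwise limit $\sqrt{n}\,T_n(x,y)\to{\bf c}^{-1}\nu_Q(y)$, as required. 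The bulk of the work having been carried out in Proposition \ref{QZZ}, the only non-routine point in this assembly is pinning down the multiplicative constants consistently between \emph{R4} and \emph{R5}; once $L$ is taken to be the common constant dictated by the tail asymptotic for $\sum_{j>n}r_j$, the identification of the prefactor ${\bf c}^{-1}=1/(\pi L)$ is immediate.
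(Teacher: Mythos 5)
Your proposal is correct and follows exactly the paper's route: the authors likewise deduce the corollary by checking that Proposition \ref{QZZ} makes $(Q_n)_{n\geq1}$ an aperiodic renewal sequence satisfying \emph{R4} and \emph{R5} with $\beta=1/2$ and $L$ the constant dictated by the tail of $\sum_{j>n}r_j$, and then invoke Theorem \ref{theogouezel}. Your constant bookkeeping (${\bf c}=\pi L$, $d_{1/2}=1/\pi$) and the pointwise extraction via $\mathbf 1_{\{y\}}\in\mathcal B_\Psi$ are both accurate.
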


\subsection{Proof of Theorem \ref{theo1} in the recurrent cases \texorpdfstring{$\bf(P, Z)$}{Z} and \texorpdfstring{$\bf(Z, Z)$}{Z}}\label{secproofrecurentcase}

The proof is   based on  the   decomposition of trajectories
 \eqref{decompositiontraject}  which holds for $x, y \in \mathbb Z$.  In both cases $\bf (Z, Z)$ and $(\bf P, Z)$, the stopping times $C_k, k \geq 0$ are $\mathbb P$-as  finite  and  \eqref{decompositiontraject} may be written as  : for any $x\in \mathbb Z$,  
\begin{equation}\label{Pnxyrecurrent}
\mathbb P_x[X_n=y]= 
 \sum _{k=0}^n T_{n-k}  {\bf V}_{k, y}(x) 
\end{equation}
where the  functions $\displaystyle   {\bf V}_{k, y} $ are defined in  \eqref{hny} and  \eqref{h'ny}.
  In order to obtain the asymptotic behavior of $\mathbb P_x[X_n=y]$,  we use the following lemma which is a functional version of   \cite[Lemma 2.2]{iglehart}. 
 \begin{lemma}\label{qjksbh}
 Let $(P_n)_{n \geq 0}$  be  a sequence of positive operators acting on  a  Banach space $ ({\mathcal B}, \vert \cdot \vert_{\mathcal B })$  of real functions on $\mathbb R$ and $(g_n)_{n \geq 0}$ a sequence of positive functions in $\mathcal B$ such that
 \begin{enumerate}[(1)]
 \item the sequence $(\sqrt{ n}P_n)_{n \geq 0}$ converges in $ (\mathcal L(\mathcal B), \Vert.\Vert_{\mathcal{B} }) $ to $P$,
 \item the sequence $(n g_n)_{n \geq 0}$ is bounded in $\mathcal B$ and $\displaystyle \sum_{n \geq 0} \vert g_n\vert_\mathcal B<+\infty.$
 \end{enumerate}
 Then  the sequence  $\displaystyle \bigg(\sqrt{n} \sum_{k=0}^{n}P_{n-k}(g_{k})\bigg)_{n \geq 0}$ converges in $\mathcal B$ to $P(\mathbf G)$  with $\displaystyle \mathbf G:= \sum_{k \geq 0} g_k.$
  \end{lemma}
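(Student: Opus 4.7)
The plan is to split the convolution at the index $k = \lfloor n/2 \rfloor$ and handle the two halves separately, in the spirit of Iglehart's scalar convolution argument. I write
\[
\sqrt{n}\sum_{k=0}^n P_{n-k}(g_k) \;=\; H_n + T_n,
\]
with $H_n := \sum_{k=0}^{\lfloor n/2\rfloor} \sqrt{n}\,P_{n-k}(g_k)$ and $T_n := \sum_{k=\lfloor n/2\rfloor+1}^n \sqrt{n}\,P_{n-k}(g_k)$. I then aim to show that $H_n \to P(\mathbf G)$ in $\mathcal B$, capturing the main contribution, and that $T_n \to 0$ in $\mathcal B$.

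For the head $H_n$, I factor $\sqrt{n}\,P_{n-k} = \sqrt{n/(n-k)} \cdot \bigl(\sqrt{n-k}\,P_{n-k}\bigr)$. For $k \leq n/2$ the scalar $\sqrt{n/(n-k)}$ lies in $[1,\sqrt 2]$ and tends to $1$ as $n \to \infty$ for each fixed $k$, while $\sqrt{n-k}\,P_{n-k} \to P$ in $\mathcal L(\mathcal B)$ since $n - k \geq n/2 \to \infty$ by hypothesis~(1). Hence the operators $A_{n,k} := \sqrt{n}\, P_{n-k}$ are uniformly bounded in $n,k$ and $\|A_{n,k} - P\|_{\mathcal B} \to 0$ pointwise in $k$. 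Combined with the summable envelope $(|g_k|_{\mathcal B})_k \in \ell^1$ from hypothesis~(2), a dominated convergence argument in $\ell^1$ gives
\[
H_n \;=\; \sum_{k \geq 0} \mathbf{1}_{\{k \leq n/2\}}\, A_{n,k}(g_k) \;\longrightarrow\; \sum_{k \geq 0} P(g_k) = P(\mathbf G) \quad \text{in } \mathcal B.
\]

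For the tail $T_n$, I substitute $j = n - k$ so that $T_n = \sqrt{n}\sum_{j=0}^{\lfloor n/2\rfloor - 1} P_j(g_{n-j})$. By hypothesis~(2), $|g_{n-j}|_{\mathcal B} \leq M/(n-j) \leq 2M/n$ for $j < n/2$; from hypothesis~(1) the sequence $(\sqrt{j}\,P_j)$ is bounded, so $\|P_j\|_{\mathcal B} \leq C/\sqrt{j \vee 1}$. The direct bound
\[
\|T_n\|_{\mathcal B} \;\leq\; \frac{2M \sqrt n}{n} \sum_{j=0}^{\lfloor n/2\rfloor - 1} \|P_j\|_{\mathcal B} \;=\; O(1)
\]
is only marginal, and sharpening it to $o(1)$ is the \textbf{main obstacle}. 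To achieve this I would combine the tail summability $\epsilon_n := \sum_{k > n/2} |g_k|_{\mathcal B} \to 0$ with either a Cauchy--Schwarz estimate against the sequence $(\|P_j\|_{\mathcal B})_j$, or a further split of the $j$-range at $j = n^{\alpha}$ for some $\alpha \in (1/2,1)$, where on each subrange one factor carries enough decay to make the contribution vanish. In the concrete application in subsection~\ref{secproofrecurentcase}, Lemma~\ref{MAJ+asym} actually provides the stronger decay $|g_k|_{\mathcal B} = O(k^{-3/2})$, so that $T_n = O(1/\sqrt{n}) \to 0$ without any such refinement. Combining $H_n \to P(\mathbf G)$ with $T_n \to 0$ then concludes the proof.
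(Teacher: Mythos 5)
Your treatment of the head $H_n$ is sound (it condenses what the paper achieves with its first two blocks), but the tail estimate is a genuine gap, and you have correctly identified it without closing it. Under the stated hypotheses the bound $\Vert T_n\Vert_{\mathcal B}=O(1)$ cannot be rescued by the devices you sketch. A further cut at $j=n^{\alpha}$ leaves a middle range $n^{\alpha}<j<n/2$ whose contribution is of order $n^{(1-\alpha)/2}\sum_{k>n/2}\vert g_k\vert_{\mathcal B}$; since the tail sums of $(\vert g_k\vert_{\mathcal B})$ tend to $0$ with no prescribed rate, this need not vanish for any $\alpha<1$. Likewise, Cauchy--Schwarz gives at best a bound of order $\bigl(\log n\cdot\sum_{k>n/2}\vert g_k\vert_{\mathcal B}\bigr)^{1/2}$, which again need not tend to $0$. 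Falling back on the decay $\vert g_k\vert_{\mathcal B}=O(k^{-3/2})$ available in the application proves a weaker statement than the lemma as formulated, and the lemma is invoked as a general functional analogue of Iglehart's convolution lemma.

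The missing idea is to place the upper cut at $k=\lfloor n(1-\varepsilon)\rfloor$ rather than at $n/2$, which is exactly what the paper does: it writes the sum as $I_n+J_n+K_n$ over the ranges $k\le N$, $N<k\le\lfloor n(1-\varepsilon)\rfloor$ and $k>\lfloor n(1-\varepsilon)\rfloor$. On the middle range one only needs $\sqrt n\,\Vert P_{n-k}\Vert_{\mathcal B}\le A\sqrt{n/(n-k)}\le A/\sqrt\varepsilon$, whence $\vert\sqrt n\,J_n\vert_{\mathcal B}\le (A/\sqrt\varepsilon)\sum_{k>N}\vert g_k\vert_{\mathcal B}$, small in $N$ for each fixed $\varepsilon$. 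On the last range both decays act simultaneously:
\[
\vert \sqrt n\, K_n\vert_{\mathcal B}\;\le\; AB\sqrt n\sum_{k=\lfloor n(1-\varepsilon)\rfloor+1}^{n-1}\frac{1}{k\sqrt{n-k}}+\Vert P_0\Vert_{\mathcal B}\,\frac{B}{\sqrt n}\;\le\;\frac{AB\sqrt n}{n(1-\varepsilon)}\sum_{j=1}^{\lceil n\varepsilon\rceil}\frac{1}{\sqrt j}+O(n^{-1/2})\;\le\;\frac{2AB\sqrt\varepsilon}{1-\varepsilon}+O(n^{-1/2}),
\]
which is small in $\varepsilon$ uniformly in $n$. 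Letting $n\to+\infty$, then $N\to+\infty$, then $\varepsilon\to0$ finishes the proof. The point is that the region where neither the summability of $(\vert g_k\vert_{\mathcal B})$ nor the joint decay $1/(k\sqrt{n-k})$ is effective must be confined to an arbitrarily small fraction of $[0,n]$ adjacent to the right endpoint; a fixed cut at $n/2$ cannot achieve this.
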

\begin{proof}
Fix $\varepsilon >0$ and $N$ such that $1\leq N \leq \lfloor n (1-\varepsilon)\rfloor$.   First, we decompose 
\begin{align*}
     \sum_{k=0}^{n}P_{n-k}(g_{k})= \sum_{k=0}^{N}+\sum_{k=N+1}^{ \lfloor n(1-\varepsilon)\rfloor}+\sum_{k=\lfloor n(1-\varepsilon)\rfloor +1}^{n}:=I_n+J_n+K_n
\end{align*}
\\
Since 
$\displaystyle \sqrt{n} I_n=\sum_{k=0}^{N }\sqrt{ n-k } P_{n-k}\left({\sqrt \frac{n}{n-k}}g_k \right)$, the sequence  $(\sqrt{n} I_n)_{n \geq 0}$ converges in $\mathcal B$ to $\displaystyle P\left(\sum_{k=0}^N g_k\right)$.
\\
Using the fact that $\Vert P_n\Vert_\mathcal B \leq A/\sqrt{n}$ for some strictly positive constant $A $, we obtain
\begin{align*}
 \vert \sqrt{n} J_n\vert_{\mathcal B} \leq A\sum_{k=N+1}^{ \lfloor n(1-\varepsilon)\rfloor}\sqrt{\frac{n}{n-k}}\vert g_k\vert_\mathcal B
\leq \frac{A}{\sqrt{\varepsilon}}\sum_{k=N+1}^{+\infty}\vert g_k\vert_\mathcal B.
\end{align*}
At last, since $\vert g_n\vert_\mathcal B\leq B / n $ for some  strictly positive constant $B $, we may write
\begin{align*}
 \vert \sqrt{n} K_n\vert_{\mathcal B}  &\leq AB\sqrt{n}  \sum_{k=\lfloor n(1-\varepsilon)\rfloor +1}^{n-1} \frac{1}{k\sqrt{ n-k }} + \Vert P_0\Vert_\mathcal B \ B/\sqrt{n}
\\
&\leq \frac{AB\sqrt{n}}{ n(1-\varepsilon) }\sum_{j=1}^{\lfloor n\varepsilon\rfloor}\frac{1}{\sqrt{j}}+ \Vert P_0\Vert_\mathcal B \ B/\sqrt{n}
\\
&\leq 2AB \frac{\sqrt{\varepsilon}}{1-\varepsilon}+ \Vert P_0\Vert_\mathcal B \ B/\sqrt{n}.
\end{align*}
Consequently,
\begin{align*}
\limsup_{n \to +\infty} \bigg\vert \sqrt{n} \sum_{k=0}^{n}P_{n-k}(g_{k}) - P(\mathbf G)\bigg\vert_{\mathcal B} 
&\leq \Vert P\Vert_\mathcal B \sum_{k =N+1}^{+\infty}\vert g_k\vert_\mathcal B+ \frac{A}{\sqrt{\varepsilon}}\sum_{k=N+1}^{+\infty}\vert g_k\vert_\mathcal B +
2AB \frac{\sqrt{\varepsilon}}{1-\varepsilon} 
\end{align*}
and one concludes  by  letting $ N\to +\infty$ then $\varepsilon \to 0$.
\end{proof}
 The convergence in Theorem \ref{theo1} in the null recurrent cases  ${\bf (Z, Z)} $ and ${\bf (P, Z)} $ follows by combining Corollary
 \ref{convergeH} and the previous lemma with $ \mathcal B= \mathcal B _\Psi,  P_n = T_n, P= {\bf c}^{-1}\Pi_Q, g_n=    {\bf V}_{n, y} $ and $\mathbf G= \displaystyle\sum_{n \geq 0}    {\bf V}_{n, y} $. This proves that, for any $x, y$ in $\mathbb Z$, the sequence  $(\sqrt{n} \mathbb P_x[X_n=y])_{n \geq 1}$ converges   to
$
 C_y=   \nu_Q\left(\sum_{n \geq 0}    {\bf V}_{n, y} \right)/\bf c 
$
 with  $\displaystyle  {\bf c}={ \pi\bigl(2c\nu_Q( \check{V}_{*+})+2c'\nu_Q(V'_{*-})\bigr)}$ in the case  ${\bf (Z, Z)} $  and 
 $\displaystyle  {\bf c}={\pi (2c')\nu_Q(V'_{*-})}$ in the case ${\bf (P, Z)}$. To obtain a more explicit expression of this   limit quantity $C_y$, we focus  on the case ${\bf (Z, Z)} $, the argument in the case ${\bf (P, Z)} $ is similar. Assume for instance  $y \leq -1$. On the one hand, by the ``balayage theorem", it holds
 \begin{align*}
 \nu_Q\left(\sum_{n \geq 0}    {\bf V}_{n, y} \right)&=\sum_{ x \leq -1}\nu_Q(x)  \sum_{n \geq 0}   \mathbb P [\tau^S(x)>n, x+S_n=y]  \\
 &
 =\sum_{ x \leq -1}\nu_Q(x)  \sum_{n \geq 0}   \mathbb P_x [C_1>n,  X_n=y] 
 \\
 &=\sum_{ x \leq -1}\nu_Q(x) \mathbb E_x[\sum_{n=0}^{C_1-1} {\bf 1}_{\{y\}}(X_n)]
 \\
 &= \mathbb E_{\nu_Q}[\sum_{n=0}^{C_1-1} {\bf 1}_{\{y\}}(X_n)]= \lambda_{\mathcal X}(y).
 \end{align*}  
  In order to identify the constant ${\bf c}$, we set 
$
T_n:= \inf\{0\leq k \leq n\mid   S_k=  \max(S_0, \ldots, S_n)\} 
$
 for $n \geq 0$ and decompose  
the quantity $\mathbb P[\tau^S(x)>n, x+S_n=y]$  as follows :
   \begin{align*}
\mathbb P [\tau^S(x)>n, x+S_n=y]&=
   \sum_{k=0}^{n}\sum_{M=x\vee y}^{-1}\mathbb P_x [\tau^S(x)>n, T_n=k, x+S_k=M, x+S_n=y] 
  \\
  &=\sum_{k=0}^{n}\sum_{M=x\vee y}^{-1}
  \mathbb P[x+S_1 < M, \ldots, x+S_{k-1}<M, x+S_k=M]
  \\
  &   \qquad \qquad\times \mathbb P[\xi_{k+1}\leq 0, \ldots, \xi_{k+1}+\ldots+ \xi_{n-1}\leq 0, \xi_{k+1}+\ldots +\xi_n=y-M]
  \\
 &=\sum_{k=0}^{n}\sum_{M=x\vee y}^{-1}
  \mathbb P[\xi_2+\ldots +\xi_k>0, \ldots, \xi_k>0, S_k=M-x]
  \\
  &   \qquad \qquad\qquad \qquad \times \mathbb P[S_{1}\leq 0, \ldots, S_{n-k-1}\leq 0, S_{n-k}=y-M]
  \\
  &=\sum_{k=0}^{n}\sum_{M=x\vee y}^{-1}
  \mathbb P[S_1>0, \ldots, S_{k-1}>0, S_k=M-x]
  \\
  &   \qquad \qquad \qquad \qquad \times \mathbb P[S_{1}\leq 0, \ldots, S_{n-k-1}\leq 0, S_{n-k}=y-M] 
  \\
  &=\sum_{k=0}^{n}\sum_{M=x\vee y}^{-1} \mathbb P[\tau_-^S>k, S_k=M-x] \times \mathbb P[\tau_{*+}^S>n-k, S_n-k=y-M]
 \end{align*}
 which yields, for $x, y \leq -1$, 
\[
 \sum_{n=0}^{+\infty}{\bf V}_{n, y}(x)
 = 
 \sum_{M=x\vee y}^{-1} \left( \sum_{n=0}^{+\infty}\mathbb P[\tau_-^S>n, S_n=M-x]\right)  \left( \sum_{n=0}^{+\infty}\mathbb P[\tau_{*+}^S>n, S_n=y-M]\right). 
\]
Recall that, by   a similar reversing time argument, it holds  $\displaystyle \sum_{n=0}^{+\infty}\mathbb P[\tau_-^S>n, S_n=M-x]=U_{*+}(M-x)$ and $
 \displaystyle \sum_{n=0}^{+\infty}\mathbb P[\tau_{*+}^S>n, S_n=y-M]=U_-(y-M)$. 
 Hence $\displaystyle \sum_{n=0}^{+\infty}{\bf V}_{n, y}(x) = \sum_{M=x\vee y}^{-1}U_{*+}(M-x) U_-(y-M) $, 
 which yields for  $y \leq x\leq -1$,
 \[
 \lambda_{\mathcal X}(y)= \nu_Q\left(\sum_{n=0}^{+\infty}{\bf V}_{n, y}\right)=
 \sum_{x \leq -1}   \nu_Q(x) \left( \sum_{M=x}^{-1}U_{*+}(M-x)U_-(y-M) \right).
 \]
 The function $z\mapsto U_-(z)$ is bounded  and satisfies $\displaystyle \lim_{y \to -\infty}U_-(y-M)= \frac{1}{\mathbb E[S_{\tau^S_-}]}$ for any $M\leq -1$, by renewal theorem. The dominated convergence theorem thus implies that $\displaystyle \lambda_{\mathcal X}(-\infty):= \lim_{y\to -\infty}\lambda_{\mathcal X}(y)$ does exists and
\[
 \nu_Q(\check{V}_{*+})=   \sum_{x \leq -1} \nu_Q(x) \underbrace{\left(\sum_{M=x}^{-1}U_{*+}(M-x)\right)}_{=V_{*+}(x)}= \lambda_{\mathcal X}(-\infty)\mathbb E[S_{\tau^S_-}].
\]
 Similarly, $\displaystyle \lambda_{\mathcal X}(+\infty):= \lim_{y\to +\infty}\lambda_{\mathcal X}(y)$ does exists and
$\displaystyle
 \nu_Q(V'_{*-})=     \lambda_{\mathcal X}(+\infty)\mathbb E[S_{\tau^{S'}_+}].
$   Finally, combining these two last equalities with \eqref{kjzghv} and \eqref{jhevgt},  we obtain ${\bf c}=\sqrt{\frac{\pi}{2}}(\sigma \lambda_{\mathcal X} (-\infty)+\sigma' \lambda_{\mathcal X} (+\infty))$ in the case ${\bf (Z, Z)} $  and ${\bf c}= \sigma' \lambda_{\mathcal X} (+\infty))$ in the case ${\bf (P, Z)}.$ 
 This achieves the proof of Theorem \ref{theo1} in the null recurrent case.
 \section{The transient case \texorpdfstring{$\bf (Z, P)$}{Z}} \label{sectiontransientZP}

This is the simplest transient case to study since $\rho_\mathcal X =1$. We still set $\Psi(x)= 1+\vert x\vert^{1+\delta}$ with $0<\delta < \delta_0.$

  \subsection{Preliminary remarks}
  
 Notice first that in this case, $\mathbb P_x[C_1<+\infty]=1$ for any $x \leq  0$  while  when $x \geq 1$,  
 $$\mathbb P_x[C_1<+\infty] \leq  \mathbb P_1[C_1<+\infty]<1.$$
 Consequently, the  switching operator $Q$ is submarkovian in this case.
This forces us, by using   the classical ``Doob's $H$-transform"  in probability theory, to slightly modify  the operator $Q$ and obtain a markovian  operator $\ {^HQ}$.

 The asymptotic behavior  of the quantities $\mathbb P_x[C_1=n, X_n=y] $ in the  case $(\bf Z, P)$ are given by Lemmas \ref{MAJ+asym} and \ref{relativisation} applied respectively to $S$ and $S'$ (with $\lambda'<0$ since $\mathbb E[\xi'_1]>0$). Let us recall them briefly for $x \neq 0$.

\noindent $\bullet$ \quad  \underline{when $x \leq -1$ and $y \geq 0$}, 
 
\begin{equation} \label{MAJlocalS}
\mathbb{P}_x[C_1=n,  X_n =y] \preceq \frac{(1+\vert x\vert)\sum_{z> y} z\mu(z)}{n^{3/2}}
\end{equation}
and
\begin{equation} \label{asymplocalS}
\mathbb{P}_x[C_1=n,  X_n =y]  \thicksim   
 \frac{1}{\sigma \sqrt{2\pi}} \frac{V_{*+}(\vert x\vert)}{n^{3/2}}\displaystyle\sum_{w \geq 1} V_{-}(w) \mu(  y+w);
\end{equation}

\noindent $\bullet$ \quad  \underline{when  $x \geq 1$ and $y \leq 0$},   by applying \eqref{zqhcgbef}  with   $p= 5/2+\delta_0$ and $\epsilon=\delta  <\min(1,\delta_0)$, one obtains
\begin{equation}\label{ZPS'}
\mathbb{P}_x[C_1=n,  X_n =y] 
 \preceq\,   {1\over \Psi(y)\ n^{3/2+\delta_0-\delta}} \quad {\rm  with} \ \delta_0-\delta >0.
\end{equation}

 \noindent $\bullet$   
$\quad \displaystyle
\mathbb{P}_0[C_1=n,  X_n =y] 
 =\mu_0(0)^{n-1} \mu_0(y) 
$ with $\mu_0(0) <1$.

The following statement is an adaptation of the  Proposition \ref{QZZ} to the transient cases with some major modifications due to the fact that the operator $Q$ is submarkovian here. As  in the previous section, this operator $Q$  may be decomposed  as $Q=\displaystyle \sum_{n \ge 1} Q_n$ with $Q_n\varphi(x) = \mathbb E[C_1=n, \varphi(X_n)]$ for any $\varphi \in \mathcal{B}_{\Psi}$ and  we set $Q(z)= \displaystyle\sum_{n \ge 1} z^n Q_n$ for any $\vert z\vert \leq 1$.

\begin{prop} \label{QZP} \underline{Case $  (\bf Z, P)$}.

Assume hypotheses {\bf O1}-- {\bf O4}  and moment conditions  $\bf (Z, P)$. Then,  

(i)  the map $Q$ acts on $  \mathcal B_{\Psi}$ and $Q( \mathcal B_{\Psi}) \subset \mathbb L^\infty(\mathbb{Z})$;

(ii)  $Q$ is a compact operator on $ \mathcal B_{\Psi} $ with spectral radius $\rho_{\Psi}  $ strictly less than 1 and  there exists a positive bounded function $H$ in $\mathcal B_\Psi$ such that 
$QH= \rho_\Psi H$ and $\displaystyle\inf_{x \in \mathbb Z} H(x) >0$.
    
 Let $\ {^HQ}$ and $\  ^H{Q}_n $ for $ n \geq 1$ be the operators  defined by : for any $\varphi \in \mathcal B_{\Psi}$, 
 \begin{equation} \label{widetildeQ}
 \ {^HQ}\varphi = \frac{1}{ \rho_\Psi} \frac{1}{ H}Q(H\varphi) \quad and \quad 
  ^H{Q}_n\varphi = \frac{1}{ \rho_\Psi} \frac{1}{ H}Q_n(H\varphi).
 \end{equation}
 
 Then, the operator $\displaystyle \ {^HQ}: \varphi \mapsto \frac{1}{ \rho_\Psi} \frac{1}{ H}Q(H\varphi)$  is a markovian and compact operator   on $ \mathcal B_{\Psi}$   with  the unique and simple dominant eigenvalue $1$. Hence, it can be decomposed as
 $
\ {^HQ}= {^H \Pi} + Q'
 $
where 
\\
\indent $\bullet$ \ \ ${^H \Pi}$ is the eigenprojector from $\mathcal B_{\Psi}$ to $\mathbb C \bf{1}$ corresponding to the eigenvalue $1$ and ${^H \Pi}(\varphi)=  {^H\nu}(\varphi) \mathbf{1}$, where $^H\nu$ is the unique $\ {^HQ}$-invariant probability measure on $\mathbb Z$;

\indent $\bullet$ \ \ the spectral radius of $ Q'$  on $ \mathcal B_{\Psi}$ is strictly less than 1; 

 $\bullet$ \ \ ${^H \Pi} Q'= Q'{^H \Pi} = 0$.

\noindent  Furthermore, the spectral radius  of $^HQ(z)= \sum_{n \geq 1} {z^n}  ^HQ_n $ on $ \mathcal B_{\Psi}$  is strictly less than 1 for $z \in \overline{\mathbb{D}}\setminus\{1\}$ and the sequence $(^H{Q}_n)_{n\geq 1}$ holds the following properties:
\begin{enumerate}[(a)]
\item the operators $\  ^H{Q}_n$ act  on $\mathcal B_{\Psi}$ and 
 $\displaystyle \Vert   ^H{Q}_n \Vert_{ \mathcal B_{\Psi}} = O\left(\dfrac{1}{n^{3/2}}\right)$;  
       \item the sequence $(n^{3/2} {^H}Q_n)_{n \geq 0}$ converges in $\mathcal  L( \mathcal B_{\Psi})$  to the operator $ ^H\mathcal{E}$  whose transition matrix is given by: for any $x, y \in \mathbb Z$, 
\[
{^H {\mathcal E}}(x, y) = 
\left\{
\begin{array}{cl}
\frac{ H(y)}{ \rho_\Psi H(x)} \left(\frac{1}{\sigma\sqrt{2\pi}}  V_{*+}(\vert x\vert)   \displaystyle\sum_{z \geq 1} V_{-}(z) \mu (  y+z)\right)
 & if\  \Bigl(x\leq  -1 \  and  \ y \geq 0\Bigr),  
 \\
0& if \ \Bigl(x=0\ and \  y \neq 0\Bigr) \ \\
\ & \qquad \qquad or \ \Bigl(x\geq 1\ and \  y\leq 0\Bigr).
\end{array}
\right.
\] 
  \end{enumerate}
\end{prop}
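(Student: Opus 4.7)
The strategy closely parallels Proposition \ref{QZZ} with one extra layer: since the switching subprocess is transient, $Q$ is only submarkovian, so we must first produce a Perron eigenfunction $H$ and then analyse the $h$-transformed kernel ${^HQ}$, which is genuinely markovian and to which the arguments of Proposition \ref{QZZ} apply.

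\textbf{Step 1 (assertion (i) and compactness).} The plan is to bound $|Q\varphi(x)|$ uniformly in $x\in\mathbb Z$, treating the three regimes $x\leq -1$, $x=0$, $x\geq 1$ separately. For $x\leq -1$ one reproduces verbatim the computation of Proposition \ref{QZZ}(i), which uses only $\mathbb E[(\xi_1^+)^{3+\delta_0}]<+\infty$; the case $x=0$ is trivial from $\mathbb E[|\eta_1|^{1+\delta_0}]<+\infty$. The new ingredient is the case $x\geq 1$: applying Lemma \ref{jhzcabgjehf} with $p=5/2+\delta_0$ and summing \eqref{qvzgbrvhk} over $n$ gives $Q(x,y)\preceq (1+|y|)^{-(3/2+\delta_0)}$ \emph{uniformly in $x\geq 1$}, and one then takes $\delta$ small enough (e.g.\ $0<\delta<\delta_0\wedge\tfrac12$) so that $\sum_{y\leq 0} Q(x,y)\Psi(y)<+\infty$. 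Compactness of $Q$ on $\mathcal B_\Psi$ then follows from $Q(\mathcal B_\Psi)\subset\mathbb L^\infty(\mathbb Z)$ together with the compact embedding $\mathbb L^\infty(\mathbb Z)\hookrightarrow\mathcal B_\Psi$, exactly as in the recurrent case.

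\textbf{Step 2 (Perron eigenfunction and $\rho_\Psi<1$).} Since $Q$ is a positive compact operator, irreducible on its essential class $\mathcal I_{\bf C}$, Krein--Rutman furnishes a positive eigenfunction $H\in\mathcal B_\Psi$ with $QH=\rho_\Psi H$. Boundedness of $H$ follows from $H=\rho_\Psi^{-1}QH\in\mathbb L^\infty$, and the lower bound $\inf_{\mathbb Z}H>0$ is obtained by bootstrapping the identity $H(x)=\rho_\Psi^{-1}\sum_y Q(x,y)H(y)$: hypotheses {\bf O2}--{\bf O3} guarantee a uniform positive minorant of $Q(x,\cdot)$ on a fixed finite subset of $\mathcal I_{\bf C}$ where $H$ is already bounded below. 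The inequality $\rho_\Psi<1$ is then established by contradiction: if $\rho_\Psi=1$, the process $\bigl(H(X_{C_n})\mathbf 1_{[C_n<\infty]}\bigr)_{n\geq 0}$ would be a bounded non-negative $\mathbb P_x$-martingale; but $\mathbb E[\xi'_1]>0$ implies $\mathbb P_1[C_1<+\infty]<1$, so the subprocess is transient and $\mathbb P_x[\exists n: C_n=\infty]=1$, forcing the martingale to converge to $0$ almost surely. Dominated convergence then yields $H(x)=\lim_n\mathbb E_x[H(X_{C_n})\mathbf 1_{[C_n<\infty]}]=0$, a contradiction.

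\textbf{Step 3 (spectral picture of ${^HQ}$ and estimates on ${^HQ_n}$).} The Doob transform ${^HQ}(x,y)=Q(x,y)H(y)/(\rho_\Psi H(x))$ is markovian by the eigenvalue equation, and compact on $\mathcal B_\Psi$ since multiplication by $H$ and by $1/H$ are bounded automorphisms of $\mathcal B_\Psi$. Moreover ${^HQ}(\mathcal B_\Psi)\subset\mathbb L^\infty(\mathbb Z)$, so the peripheral spectrum analysis of ${^HQ}$ and of ${^HQ}(z)$ for $z\in\overline{\mathbb D}\setminus\{1\}$ mimics Proposition \ref{QZZ}(ii)--(iii) verbatim: a convexity argument on eigenfunctions of modulus one, combined with aperiodicity of $\mu$ and hypothesis {\bf O3}, forces such eigenfunctions to be constant and rules out modulus-one eigenvalues of ${^HQ}(z)$ for $z\neq 1$. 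For the norm estimate $\Vert{^HQ_n}\Vert_{\mathcal B_\Psi}=O(n^{-3/2})$, one transfers to $Q_n$ via the bounded conjugation by $H$, and then uses \eqref{MAJlocalS} for $x\leq -1$ together with \eqref{ZPS'} for $x\geq 1$ (the latter yielding $O(n^{-(3/2+\delta_0-\delta)})$, hence negligible compared to $n^{-3/2}$, which is why the $(x\geq 1,y\leq 0)$ block of $^H\mathcal E$ vanishes). Dominated convergence in $\mathcal B_\Psi$ mimicking Proposition \ref{QZZ}(c) delivers $n^{3/2}Q_n\to\mathcal E$, and conjugation by $H$ produces the stated formula for ${^H\mathcal E}$.

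\textbf{Main obstacle.} The delicate point is the existence of a Perron eigenfunction $H$ that is simultaneously bounded \emph{and} bounded away from zero on all of $\mathbb Z$: Krein--Rutman naturally provides strict positivity on the essential class $\mathcal I_{\bf C}$, but a uniform lower bound over all of $\mathbb Z$ requires exploiting aperiodicity of $\mu$ and hypothesis {\bf O3} to show that $Q(x,\cdot)$ charges a prescribed reference set uniformly in $x$. Everything downstream --- the compactness and spectral structure of ${^HQ}$, and in particular the applicability of the Gou\"ezel-type renewal theorem of section \ref{subprocess} to the conjugated operators $({^HQ_n})_{n\geq 1}$ --- hinges on this lower bound.
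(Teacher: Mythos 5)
Your proposal is correct and follows essentially the same architecture as the paper's proof: bound $Q$ from $\mathcal B_\Psi$ into $\mathbb L^\infty(\mathbb Z)$, deduce compactness from the compact embedding, apply Krein--Rutman on the essential class $\mathcal I_{\bf C}$ and extend $H$ to all of $\mathbb Z$ by one application of $Q$, obtain $\inf H>0$ from $\inf_x Q(x,0)>0$, pass to the Doob transform, and rerun the convexity/aperiodicity argument of Proposition \ref{QZZ} for the peripheral spectrum, with the estimates (a)--(b) read off from \eqref{MAJlocalS}, \eqref{asymplocalS} and \eqref{ZPS'}. Two local steps differ. For $\rho_\Psi<1$ you use a bounded-martingale/transience argument, whereas the paper shows directly that $\vert Q^2\mathbf 1\vert_\infty\leq 1-\epsilon_0$ and exploits $Q^2H=\rho_\Psi^2H$ with $H$ bounded above and below; both are valid, and yours is arguably more conceptual. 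For the block $x\geq 1$ of assertion (i) you sum the Nagaev--Fuks bound \eqref{qvzgbrvhk} over $n$, getting $Q(x,y)\preceq(1+\vert y\vert)^{-(3/2+\delta_0)}$ uniformly in $x\geq 1$, while the paper uses the ladder factorization \eqref{Qswitchingtime-finite2} together with a moment theorem for the (defective) descending ladder height of $S'$; the two routes give the same decay in $y$. One point to fix: with that decay, $\sum_{y\leq 0}Q(x,y)\Psi(y)<+\infty$ requires $1+\delta-(3/2+\delta_0)<-1$, i.e. $\delta<\delta_0-\tfrac12$, not the range $0<\delta<\delta_0\wedge\tfrac12$ you state (which is vacuously insufficient when $\delta_0\leq\tfrac12$). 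This is an arithmetic slip rather than a structural flaw --- the paper's own ladder-height route faces the same constraint through the finiteness of $\sum_{y\leq 0}\Psi(y)\,\mu'_{*-}(-\infty,y)$ --- but you should state the admissible range of $\delta$ correctly, or note that the exponent in $\Psi$ must be chosen accordingly.
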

\begin{remark} Notice that by the definition \eqref{widetildeQ},  for any $\ell \geq 1$, the iterates  $Q^{(\ell)}$ and $^H Q^{(\ell)}$ satisfy:  for any $ \varphi  \in \mathcal B_\Psi$, 
\begin{equation*} 
{^HQ}  ^{(\ell)} ( \varphi) =\frac{1}{ \rho^\ell_\Psi} \frac{1}{ H}Q^{(\ell)}(H\varphi) 
\end{equation*} 
and
\begin{equation} \label{iteratesHQ}
 ^H{Q}_n ^{(\ell)}(\varphi):= \sum_{j_1+ \ldots + j_\ell=n}{^HQ}_{j_1}\cdots {^HQ}_{j_\ell}(\varphi)= 
 \frac{1}{ \rho^\ell_\Psi} \frac{1}{H}Q^{(\ell)}_n(H\varphi).
\end{equation} 
\end{remark}
The proof  of Proposition \ref{QZP} is closed to the one of Proposition \ref{QZZ}; we insist here on  the main changes  due to the fact that $Q$ is submarkovian. 
For any $\varphi \in \mathcal{B}_{\Psi}$ and $x \geq 1$,  formula \eqref{Qswitchingtime-finite2} yields 
     \begin{align*}
         |Q\varphi(x)| &\leq \ \sum_{y\leq 0}   \sum_{t = -x+1}^{0}{\mathcal U}'_{d^*}(t) \mu'_{d^*}( y-x-t)\, \vert  \varphi(y)|
         \\ 
          &\leq \ \sum_{ y\leq 0 }  \sum_{t = -x+1}^{0} \mu'_{d^*}( y-x-t)\, \vert  \varphi(y)|
         \\
         &\leq  \sum_{y \leq 0} \vert\varphi(y)\vert\ \mu'_{d^*}(-\infty, y). 
     \end{align*} 
     Hence 
     $\displaystyle  |Q\varphi(x)| \leq    \vert \varphi\vert_{\Psi}\ \sum_{ y\leq 0 }  \Psi(y) \ \mu'_{d^*}(-\infty, y) 
     $
 \quad  with \quad $\displaystyle \sum_{ y\leq 0 }  \Psi(y) \ \mu'_{d^*}(-\infty, y)
<+\infty$ 
 \cite[Theorem 7.7.13]{Alsmeyer}.   
  When $x \leq 0$,   
   we use  the  inequality \eqref{bound}. Finally,    $Q(\mathcal B_\Psi)\subset \mathbb L^{\infty}(\mathbb Z)$    and  $Q$ is  compact on ${\mathcal B_\Psi}$. 
     
The spectral radius $ \rho_{\Psi}$ of $Q$ on $\mathcal B_\Psi$ is less than $  1$ since the power sequence $(Q^n)_{n\geq 1}$ is bounded in  $\mathcal L(\mathcal B_{\Psi})$; indeed, for any $\varphi \in \mathcal B_\Psi, n \geq 1$ and $x \in \mathbb{Z}$,
    \begin{align*}
        \vert Q^{n}\varphi(x) \vert \leq \displaystyle \sum_{y\in \mathbb{Z}} Q^{n-1}(x,y) \vert Q\varphi(y) \vert \leq \vert Q\varphi \vert_{\infty} \displaystyle \sum_{y\in \mathbb{Z}} Q^{n-1}(x,y) \leq\vert Q\varphi \vert_{\infty}.
    \end{align*}
 Hence $\Vert Q^n \Vert_{ \mathcal B_{\Psi}}  \leq  
    \vert Q^n  \Psi  \vert_\infty
    \leq \vert Q\Psi \vert_{\infty}$ for any $n \geq 1$ which yields
 $ \rho_{\Psi} \leq 1.$  
 
 Under condition {\bf O3}, the operator  $Q$ is strictly positive on the subspace $\mathcal B_\Psi(\mathcal I_{\bf C})\subset \mathcal B_\Psi$ of functions $\varphi \in \mathcal B_\psi$ with support included in $\mathcal I_{\bf C}$; in other words,  $Q\varphi(x)>0$ for any $x \in \mathcal I_{\bf C}$  and any non negative function $\varphi\neq 0$  in $\mathcal B_\Psi(\mathcal I_{\bf C})$.  By Krein-Rutman  theorem, there exists  a unique positive function $ H' \in \mathcal B_\Psi(\mathcal I_{\bf C}) $   such that $  Q H' = \rho_\Psi H'$.     Since $Q$ is bounded from $\mathcal B_\Psi$ to $\mathbb L^\infty(\mathbb Z)$,  the function $ H' $ is bounded on $\mathcal I_{\bf C}$.

When  $\mathcal I_{\bf C} =\mathbb Z$, the function $H'$ is positive and bounded on $\mathbb Z$, we note  $H'=H$    in the sequel to simplify. 

 When $\mathcal I_{\bf C} \subsetneq \mathbb Z$,  since for any function $\varphi: \mathbb Z \to \mathbb R^+$   the values of $Q\varphi$ depend  only on the values of $\varphi$ on $ \mathcal I_{\bf C}$, we may  extend $H'$ to the whole line $\mathbb Z$ by setting
$\displaystyle H(x)=  {1\over \rho_\Psi} \sum_{y \in \mathcal I_{\bf C}} H'(y)Q(x, y)$ for any $x\in \mathbb Z$.
The function $H$ is  positive and bounded on $\mathbb Z$ and satisfies  $ Q H= \rho_\Psi H$ on the whole line $\mathbb Z$.

As a first consequence, we obtain  that $H$ is also bounded from below by a strictly positive constant. Indeed, $H(x) \geq Q(x, 0)H'(0)/\rho_\Psi$ with $H'(0) >0$ since $0 \in \mathcal I_{\bf C}$ and $\displaystyle \inf_{x \in \mathbb Z} Q(x, 0) >0$, by the expression \eqref{Qswitchingtime-finite2}  and  classical results on random walks (see  e.g. \cite[Lemma 5.3.3]{peignevo}).

 As a second consequence, we obtain $\rho_\Psi <1$. Indeed,  the condition $\mathbb E[\xi'_1]>0$   yields  $\mathbb P_x[C_1=+\infty] \geq \mathbb P_1[C_1=+\infty]=:\epsilon_0>0$ for any $x \geq 1$.
When $x \leq 0$, noticing that $\mathbb P_x[C_1<+\infty]=1$,  we obtain 
$
\mathbb P_x[C_2=+\infty]\geq \inf_{y \geq 1}\mathbb P_y[C_1=+\infty]= \epsilon_0>0.
$
Consequently $\displaystyle \vert Q^2{\bf 1}\vert_\infty = \sup_{x \in \mathbb Z} \mathbb P_x[C_2<+\infty]  < 1-\epsilon_0 $ which yields  $\rho_\Psi<1$ since $ Q^2H = \rho_{\Psi}^2 H$ and $H$ is bounded.

 The control of  the peripheral spectrum of $Q$   comes from  the one of $\ {^HQ}$.  This operator $\ {^HQ}$   is   compact and markovian  on $\mathcal B_\Psi$ and there is in particular a one-to-one correspondance between the set of eigenvalues  of $Q$ with modulus 1  (and  the corresponding eigenspaces) and the ones of $\ {^HQ}$. Let   $\theta \in \mathbb{R}$ and $\psi \in {\mathcal B_\Psi}$ such that $\ {^HQ}\psi =   e^{i\theta}\psi$. Using the same argument as in Proposition \ref{QZZ} and the irreducibility of $\ {^HQ}$ on $\mathcal I_{\bf C}$,  we prove that $1$ is the unique eigenfunction of $\ {^HQ}$  with modulus 1 and that the corresponding eigenfunctions are constant. Equivalently, the peripheral spectrum of $Q$  is   reduced to $\rho_\Psi$ and the eigenspace of $\rho_\Psi$ equals to $\mathbb C H$. Similarly, we prove that  for any $z\in \overline{\mathbb{D}}\setminus\{1\}$, the spectral radius of   $\ {^HQ}(z)$ is strictly less than $ 1$.

 The control of the behavior of the sequence $( {^H} Q _n)_{n \geq 1}$   in  the case  $ \bf(Z, P)$ is a direct consequence of   \eqref{MAJlocalS}, \eqref{asymplocalS}  and  \eqref{ZPS'}:   
  
 $\bullet$ for $x \leq -1$  and $y \geq 0$, 
 $ \quad   ^H{Q}_n(x, y)  \preceq  (1+ \vert x\vert)     \left(\sum_{z >y} z  \mu (z)\right)  \frac{1}{n^{3/2}}
\quad $
 and 
\begin{align*}
  ^H{Q}_n(x, y)&\sim \frac{ H(y)}{ \rho_\Psi H(x)} \left(\frac{1}{\sigma\sqrt{2\pi}}  V_{*+}(\vert x\vert)   \sum_{w \geq 1} V_{-}(w) \mu (  y+w)\right) \frac{1}{n^{3/2}}.
 \end{align*}

 $\bullet$ for $x \geq  1$  and $y \leq 0$,    
 $\displaystyle \quad   ^H{Q}_n(x, y)   \preceq\,   {1\over \Psi(y) n^{3/2+\delta_0-\delta}} \quad {\rm  with} \ \delta_0-\delta >0;
 $

 $\bullet\,   
  \displaystyle
^H{Q}_n(0, y) 
\preceq \mu_0(0)^{n-1} \mu_0(y) 
$ with $\mu_0(0) <1$.

Finally, as in  Proposition \ref{QZZ},  the sequence $(Q_n)_{n \geq 0}$ converges in $\mathcal L(\mathcal B_\Psi)$ to  some operator $\mathcal E$, hence   $(^HQ_n)_{n \geq 0}$ converges  to  $^H\mathcal E$ and a direct computation gives the explicit formula of $^H\mathcal E$. This proves assertion {\it (b)}.

\subsection{Proof of Theorem \ref{theo1} in the case  \texorpdfstring{$\bf  (Z, P)$}{Z}}

The proof is   based on  the   decomposition of trajectories
 \eqref{decompositiontraject}   and  \eqref{iteratesHQ} :  for any $x, y\in \mathbb Z$

\begin{equation}\label{returnTransientcase}
\mathbb P_x[X_n=y]
 = 
 H(x)\sum_{\ell=0}^{+\infty} \rho_\Psi^\ell  \sum _{k=0}^n \  {^HQ}_k^{(\ell)} (  {\bf V}_{n-k, y}/H)(x)
\end{equation}
where the functions $\displaystyle   {\bf V}_{k, y}$ for $k\geq 1, y \in \mathbb Z$ are defined in \eqref{hny} and  \eqref{h'ny}. 
We use the following lemma which is a functional version of   \cite[Lemma II.8]{lepagepeigne}. 
 \begin{lemma} \label{zb}
 Let $(P_n)_{n \geq 0}$  be  a sequence of positive operators acting on  a  Banach space $ ({\mathcal B}, \vert \cdot \vert_{\mathcal B })$  of real functions on $\mathbb R$ and $(g_n)_{n \geq 0}$ a sequence of positive functions in $\mathcal B$ such that
 \begin{enumerate}[(1)]
 \item the sequence $(n^{3/2}P_n)_{n \geq 0}$ converges in $ (\mathcal L(\mathcal B), \Vert.\Vert_{\mathcal B})$ to $P$,
 \item the sequence $(n^{3/2} g_n)_{n \geq 0}$ converges in $\mathcal B$ to some element $g$.
 \end{enumerate}
 Then   the sequence  $\displaystyle \bigg(n^{3/2} \sum_{k=0}^{n}P_{k}(g_{n-k})\bigg)_{n \geq 0}$ converges in $\mathcal B$ to $\mathbf{P} (g)+ P(\mathbf{G})$  with $\displaystyle \mathbf{P}:= \sum_{k \geq 0} P_k$ and $\displaystyle \mathbf{G}:= \sum_{k \geq 0} g_k.$ 
  \end{lemma}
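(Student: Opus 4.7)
The plan is to follow the head/middle/tail decomposition used in the proof of Lemma \ref{qjksbh}, adapted to the symmetric $n^{3/2}$ scaling on both sides of the convolution. Since $(n^{3/2}P_n)_{n\geq 0}$ converges in $\mathcal L(\mathcal B)$ and $(n^{3/2}g_n)_{n\geq 0}$ converges in $\mathcal B$, I first extract uniform bounds $\Vert P_n\Vert_{\mathcal B}\leq A/n^{3/2}$ and $\vert g_n\vert_{\mathcal B}\leq B/n^{3/2}$; in particular $\mathbf P$ and $\mathbf G$ are well defined in $\mathcal L(\mathcal B)$ and $\mathcal B$ respectively, since both series $\sum_k\Vert P_k\Vert_{\mathcal B}$ and $\sum_k\vert g_k\vert_{\mathcal B}$ converge. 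For a fixed integer $N$ and $n\geq 2N$, I would split
\[
n^{3/2}\sum_{k=0}^n P_k(g_{n-k}) = I_n(N)+J_n(N)+K_n(N),
\]
with $I_n(N)$ the contribution of the indices $0\leq k\leq N$, $K_n(N)$ the contribution of $n-N\leq k\leq n$, and $J_n(N)$ the remaining middle block.

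For $I_n(N)$, each summand can be rewritten as $\bigl(n/(n-k)\bigr)^{3/2}\,P_k\bigl((n-k)^{3/2}g_{n-k}\bigr)$; for fixed $k\leq N$ the prefactor tends to $1$ as $n\to+\infty$, and hypothesis (2) gives $(n-k)^{3/2}g_{n-k}\to g$ in $\mathcal B$. Hence $I_n(N)\to\sum_{k=0}^N P_k(g)$ in $\mathcal B$ and, letting $N\to+\infty$, one obtains $\mathbf P(g)$ by summability of $\sum\Vert P_k\Vert_{\mathcal B}$. Symmetrically, after the substitution $j=n-k$ each summand of $K_n(N)$ reads $\bigl(n/(n-j)\bigr)^{3/2}\bigl((n-j)^{3/2}P_{n-j}\bigr)(g_j)$; hypothesis (1) yields $(n-j)^{3/2}P_{n-j}\to P$ in $\mathcal L(\mathcal B)$, so $K_n(N)\to\sum_{j=0}^N P(g_j)=P\bigl(\sum_{j=0}^N g_j\bigr)$, which tends to $P(\mathbf G)$ as $N\to+\infty$ by continuity of $P$ and summability of $(g_j)_{j\geq 0}$.

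The main obstacle is the middle block $J_n(N)$, which must be shown to be arbitrarily small uniformly in $n$ once $N$ is large. Combining the two uniform bounds gives
\[
\vert J_n(N)\vert_{\mathcal B}\leq AB\,n^{3/2}\sum_{k=N+1}^{n-N-1}\frac{1}{k^{3/2}(n-k)^{3/2}}.
\]
I would cut the inner sum at $\lfloor n/2\rfloor$: on the lower half one has $n-k\geq n/2$, hence $n^{3/2}/(n-k)^{3/2}\leq 2^{3/2}$, so this half is bounded by $2^{3/2}AB\sum_{k>N}1/k^{3/2}$; the upper half is controlled identically after the substitution $j=n-k$. Because $3/2>1$ these tail sums converge, so the bound vanishes as $N\to+\infty$ uniformly in $n\geq 2N$. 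Taking $\limsup$ in $n$ and then letting $N\to+\infty$ collapses the middle contribution and delivers the announced limit $\mathbf P(g)+P(\mathbf G)$.
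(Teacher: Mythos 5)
Your proof is correct and follows essentially the same route as the paper: the same head/middle/tail decomposition at a fixed cutoff, the same uniform bounds $\Vert P_k\Vert_{\mathcal B}\preceq k^{-3/2}$ and $\vert g_k\vert_{\mathcal B}\preceq k^{-3/2}$ to kill the middle block by a split at $\lfloor n/2\rfloor$ and symmetry, and the same identification of the two boundary blocks with $\mathbf P(g)$ and $P(\mathbf G)$ before letting the cutoff tend to infinity. No gaps worth noting.
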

\begin{proof}
Fix $n \geq 1$ and $1\leq i \leq n/2$.  For convenience, we decompose 
$$ \sum_{k=0}^{n}P_{k}(g_{n-k})=\sum_{k=0}^{i}+\sum_{k=i+1}^{ n-i-1}+\sum_{k=n-i}^{n}:= I_n+J_n+K_n.$$ 
\\
On the one hand, setting $\displaystyle a:= \sup_{k\geq 1} k^{3/2} \Vert P_k\Vert_{\mathcal B}$
and  $\displaystyle b:= \sup_{k\geq 0} k^{3/2} \vert  g_k\vert_\mathcal B$, it holds that  
\begin{align*}
\vert n^{3/2} J_n\vert_\mathcal B &\leq ab \  n^{3/2} \sum_{k = i+1}^{n-i-1} {1\over k^{3/2}(n-k)^{3/2}}
\\
&\leq  ab \ n^{3/2} \sum_{k = i+1}^{\lfloor n/2\rfloor} {1\over k^{3/2}(n-k)^{3/2}}
\\
& \preceq  \sum_{k = i+1}^{\lfloor n/2\rfloor} {1\over k^{3/2}} \preceq \frac{1}{\sqrt{i}}.
\end{align*}
On the other hand,
$\displaystyle \quad 
\vert n^{3/2}I_n- \mathbf P(g)\vert_\mathcal B
\leq \sum_{k =0}^i  \Vert P_k\Vert_{\mathcal B} \ \vert n^{3/2} g_{n-k}-g\vert_\mathcal B+
 \vert g\vert_{\mathcal B} \times 
 \sum_{k = i+1}^{+\infty}  \Vert P_k  \Vert_{\mathcal B},
$
with $\displaystyle \sum_{k = i+1}^{+\infty}  \Vert P_k  \Vert_{\mathcal B} \leq a \sum_{k = i+1}^{+\infty} k^{-3/2}\preceq \frac{1}{\sqrt{i}}$.
Similarly, 
\begin{align*}
\vert n^{3/2}K_n-   P(\mathbf G)\vert_\mathcal B
\leq \sum_{k =0}^i  \  \Vert n^{3/2} P_{n-k}-P\Vert _{\mathcal B} \ \vert g_k\vert_\mathcal B+  \Vert P\Vert_{\mathcal B}\times \sum_{k = i+1}^{+\infty} \vert g_k  \vert_{ \mathcal B } 
\end{align*}
with 
$\displaystyle 
  \sum_{k = i+1}^{+\infty} \vert g_k  \vert_{ \mathcal B }\leq   b \sum_{k = i+1}^{+\infty} k^{-3/2}\preceq \frac{1}{\sqrt{i}}.
  $
Consequently, we obtain
\[ 
\limsup_{n \to +\infty}\bigg\vert \left(n^{3/2}   \sum_{k=0}^{n}P_{n-k}(g_{k})\right)- (\mathbf{P} (g)+ P(\mathbf{G}))\bigg\vert_{\mathcal B} 
\preceq {1\over \sqrt{i}}
\]
and one concludes by  letting $i \to +\infty$.
 \end{proof}
We achieve the proof of Theorem \ref{theo1} in the  case ${\bf (Z, P)} $  by   combining  Theorem \ref{theogouezel+}, Proposition \ref{QZP}
  and Lemma \ref{zb} with $ \mathcal B= \mathcal{B}_{\Psi}, P_n ={^H}{Q}_n^{(\ell)} $ and $ g_n=   {\bf V}_{n, y}$. Indeed, by Proposition  \ref{QZP},  for any $\ell \geq 1$, the sequence  $\bigg({^HQ}_n\bigg)_{n\geq 1}$ satisfies the hypotheses of Theorem \ref{theogouezel+} (with $L=1$ and $\beta=1/2$)   so that  $\bigg(n^{3/2}\  ^H{Q}_n^{(\ell)}\bigg)_{n\geq 1}$ converges in $\mathcal L(\mathcal B_\Psi)$ to $\displaystyle  {^H}\mathcal E _\ell:= \sum_{i=0}^{\ell-1}  {^H}Q^{(i)} {^H}\mathcal E    {^H}Q^{(\ell-i-1)}$.  Similarly, for any $y \in \mathbb Z$, the sequence   $(n^{3/2}  {\bf V}_{n, y})_{n \geq 1}$  converges  in $\mathcal B_\Psi$ to the function $  {\bf V}_y$  defined in \eqref{hy} when $y\leq 0$ and  to $0$  when $y \geq  1$  since  $\rho'<1$.
 We may thus apply the previous lemma with $P={^H}\mathcal E _\ell, \mathbf P= \displaystyle\sum_{n\geq 0} {^H}{Q}_n^{(\ell)}$ and ${\bf G}= \displaystyle\sum_{n\geq 0}    {\bf V}_{n, y}$.

Finally, by  \eqref{evgj}, we may apply the dominated convergence theorem which yields 
\begin{align*}
\lim_{n \to +\infty} n^{3/2}  \mathbb P_x[X_n=y]&=
\lim_{n \to +\infty}H(x)\sum_{\ell=0}^{+\infty} \rho_\Psi^\ell
n^{3/2}\left(\sum _{k=0}^n    {^HQ}_k^{(\ell)} (  {\bf V}_{n-k, y}/H)(x)\right)
\\
&= H(x)\sum_{\ell=0}^{+\infty} \rho_\Psi^\ell \lim_{n \to +\infty} n^{3/2}\left(\sum _{k=0}^n     {^HQ}_k^{(\ell)} (  {\bf V}_{n-k, y}/H)(x)\right)
\\
&  = 
 H(x)\sum_{\ell=0}^{+\infty}  \rho_\Psi^\ell \ 
 \left(\sum_{n\geq 0} {^H}Q_n^{(\ell)}(  {\bf V}_y/H)(x)+
\sum_{n\geq 0}   {^H}\mathcal E_\ell(  {\bf V}_{n, y}/H)(x)
 \right)>0.
 \end{align*}


 \section{Proof of Theorem \ref{theo1}:  the transient cases (N, P) and (P, P) } 
 \label{sectiontransientNPPP}

 {\bf From now on, we assume that $\mu_0=\mu$. }  The  
 resulting process $\mathcal X$ oscillates between the two media $\mathbb Z^{-}$ and 
 $\mathbb Z^{*+}$. Obviously, this forces us to reformulate the sequence of 
 the switching times as follows:   $C_0=0$ and  for any $k \geq 0$, 
\begin{align}\label{ switchingtime}
    C_{k+1} :=\left\{ 
    \begin{array}{lll}
 \vspace{2mm}
    \inf \{n > C_k \mid  X_{C_k} + (\xi_{C_k + 1} +  
    \dotsi +\xi_{n}) \geq 1\} &\text{\rm if }  X_{C_k} \leq 0, \\
     \\
 \inf \{n > C_k \mid   X_{C_k} + (\xi'_{C_k + 1} + \dotsi + \xi'_{n})
  \leq 0 \} &\text{\rm if } X_{C_k} \geq 1. 
 \end{array}
 \right.
\end{align}  
 The study of the transient cases relies on  Theorem \ref{theogouezel+} 
 and requires a change of the measures $\mu$ and/or $\mu'$ to bring us back 
 to the cases already studied. 
 We first consider the case $(\bf N, P)$ which 
 is easier.  The most difficult 
 case  is $ {\bf (P, P)}$  where the change of measure 
 is more subtle.

 \subsection{The transient case  \texorpdfstring{$\bf  (N,P)$}{N}}\label{subsectionNP}

 As in section \ref{sectiontransientZP}, the proof is   based on  
 the   decomposition of trajectories \eqref{decompositiontraject} and 
  again, we have to take into account the fact that both random walks $S$ 
  and $S'$ are not centered. In particular,  we may use
   Lemma \ref{relativisation} with $\tau^S(x)$ in this case defined by 
  $\tau^S(x):= \inf\{n\geq 1\mid x+S_n \geq 1\}$ when $x \leq 0$ and $y \geq 1,$
   since we consider $\mu_0=\mu$; we   obtain
\begin{equation*}
\mathbb{P}_x[C_1=n,  X_n =y] 
 \preceq\,   (1+\vert x\vert) e^{\lambda  x} \left(e^{-\lambda y} 
 \sum_{z>y} \vert z\vert  e^{\lambda z}\mu (z)\right)\frac{ {\rho }^{n }}{n^{3/2}} 
\end{equation*}
and 
 \begin{equation*}
\mathbb{P}_x[C_1=n,  X_n =y]  
\thicksim\frac{1}{\overset{\circ}{\sigma}  \sqrt{2\pi}}\, 
\overset{\circ}{\ \ V_{*+}}(\vert x\vert) \  e^{\lambda x}
 \left(\sum_{w \geq  1}\overset{\circ}{\ V_-} (  w)e^{\lambda  w} 
 \mu  (y+w)\right)\frac{{\rho }^{n-1}}{n^{3/2}} 
\end{equation*}
  where $\lambda >0$, $\rho=L(\lambda) <1$, $\overset{\circ}{\sigma }  $ is
   the standard deviation of the centered random walk $ \overset{\circ}{S }$ derived
    from $S$ and $\overset{\circ}{\ \  V_{*+}} $ (resp. $\overset{\circ}{\ V_-} $) is its 
    strongly ascending (resp. weakly descending) renewal function. Similar estimates
     hold when  $x \geq 1$ and $y \leq 0$, replacing respectively $\mu, \lambda$ and 
     $\rho$ by $\mu', \lambda'<0$ and $\rho'=L'(\lambda')$.

The classical  walks $S$ and $S'$ converge respectively to $-\infty$ and $+\infty$ and  
play here a symmetric role; thus, without loss of generality,
  \underline{we may  assume in this subsection   that $\rho\geq \rho'$ and set 
 $  r = \rho'/\rho.$} 
 
The moment conditions in this case yield us to consider the   Banach space 
 $\mathcal B_\Psi$  
$$
  \ \Psi(x)= \left\{\begin{array}{lll}
e^{ (\vert  \lambda'\vert+\delta) \vert x\vert} & \text{if} & x \leq 0,\\
e^{ (\lambda+\delta) x}& \text{if} & x \geq 1 
\end{array}\right.$$
where   $\delta>0$.  
As above, the operator $Q$ may be decomposed  as $Q= \sum_{n \ge 1} Q_n$ 
with $Q_n\varphi(x) = \mathbb E_x[C_1=n, \varphi(X_n)]$. Notice that,  
by \eqref{valeurrelativisation},  
 there exist here  two strictly positive constants $C_{x, y}$ and $ C'_{x,y}  $  such that 
 $Q_n(x, y) \sim C_{x, y} \rho^{n-1}/n^{3/2}$ when $x \leq 0$ and $ y\geq 1$  and
   $Q_n(x, y) \sim  C'_{x, y} {\rho'}^{n-1}/n^{3/2}$ when  $x \geq 1$ and $ y\leq 0$. 

The strategy here is to apply Theorem   \ref{theogouezel+}. The proof is 
decomposed into several steps. 

$\bullet$ \underline{Extraction of the factor $\max(\rho, \rho')^n$} 

 We first introduce the sequence of operators $(\tilde{Q}_n)_{n \geq 1}$ 
  defined by:  for $n\geq 1$, 
\begin{equation}\label{Qnr}
\tilde{Q}_n(x, y):=
\left\{
\begin{array}{cccc}
 \mathbb P[\tau^{\overset{\circ}{S_{\ }} }(x)=n, 
 x+{\overset{\circ} {S_n}}=y]& {\rm if} &   \Bigl(x \leq 0 \ {\rm and}  \    y\geq 1\Bigr),
 \\
 r^n    e^{(\lambda'-\lambda)(x-y)}  
 \mathbb P[\tau^{\overset{\circ} {S'}}(x)=n, x+{\overset{\circ} {S'_n}}=y] 
 & {\rm if} & \Bigl(x\geq  1  \ {\rm and} \   y\leq 0\Bigr).\\
\end{array}
\right.  
\end{equation}
 
 Notice that $(\lambda'-\lambda)(x-y)<0$ for any $x\geq 1$ and $y\leq 0$.

This choice is governed by the following equality: for any $x, y \in \mathbb Z$ 
and $n \geq 1$,
\[
Q_n(x, y) = \rho^n e^{\lambda (x-y)}  \tilde{Q}_n(x, y) 
\]
  which can be rewritten as $Q_n\varphi(x)/\rho^n= e^{\lambda x}  \tilde{Q}_n 
  (\varphi e^{-\lambda \cdot})(x)$ for any  function $\varphi\in \mathcal B_\Psi$.  
   Hence,   up to the factor $\rho^n$, the behavior of  the quantity $Q_n(x, y)$ as
    $n\to +\infty$  is controlled by the one of  $\tilde{Q}_n(x, y)$.
  
   By iteration, for any $\ell \geq 1$, the operator
  $\displaystyle Q_n^{(\ell)}:= \sum_{j_1+ \ldots + j_\ell=n}Q_{j_1}\ldots Q_{j_\ell}$ 
  satisfies a similar property:  
$\displaystyle 
  Q_n^{(\ell)}\varphi(x)= \rho^n e^{\lambda x} \tilde{Q}_n^{(\ell)}(\varphi /e^{\lambda 
  \cdot})(x)$
  where $\displaystyle \tilde Q_n^{(\ell)}:= \sum_{j_1+ \ldots + j_\ell=n} 
  \tilde Q_{j_1}\ldots \tilde Q_{j_\ell}.$
 
 $\bullet$ \underline{The Doob's transform}
 
 The rest of the proof is similar to the argument developed in the case  ${\bf (Z, P)}$. 
By a straightforward computation, we can check that each operator $\tilde{Q}_n$ acts 
on $\mathcal B_\Psi$ and 
\[
\sum_{n \geq 1} \Vert  \tilde{Q}_n\Vert_{\mathcal{B}_\Psi}<+\infty.
\]
 Furthermore, as in Proposition \ref{QZP}, for any $0<r\leq 1$, the operator  
 $\tilde Q(1) := \sum_{n \geq 1}  \tilde{Q}_n$ is   compact on $\mathcal B_\Psi$  
 and also submarkovian  since $\lambda >0$ and $\lambda'<0$. Its spectral radius
  $\tilde \rho_\Psi $ on  $\mathcal B_\Psi$  is thus strictly les than $1$,
   it is the unique eigenvalue with modulus $\tilde \rho_\Psi$ and  the 
    corresponding eigenspace equals $\mathbb C \tilde H $,  where $\tilde H $ 
    is a positive function in $\mathcal B_\Psi$.

We thus consider the operators  $ ^{\tilde H}{\tilde Q}$ and $\ ^{\tilde H}
{\tilde Q} _n$ for $ n \geq 1$   defined formally by
 \begin{equation*} \label{widetildeQHr}
 \ ^{\tilde H}{\tilde Q} \varphi = \frac{1}{  \tilde \rho_\Psi    \tilde H} 
  {\tilde Q}
( \tilde H\varphi)
  \quad {\rm and} \quad 
  \  {^{\tilde H}} \tilde{Q}_n\varphi = \frac{1}{  \tilde \rho_\Psi   \tilde H} 
   \tilde{Q}_n(  \tilde H\varphi).
 \end{equation*}
 By a direct computation,  as in   Proposition \ref{QZP}, the operators 
  $\  ^{\tilde H}{\tilde Q}_n$ act  on $\mathcal B_{\Psi}$ and 
   the sequence $(n^{3/2} \ ^{\tilde H}{\tilde Q}_n)_{n \geq 0}$ converges in
    $\mathcal  L( \mathcal B_{\Psi})$  to some operator $^{\tilde H} \widetilde 
    {\mathcal E}$; in particular
 for any $x, y \in \mathbb Z$,    
 $$  \ {^{\tilde H}} \tilde{Q}_n(x, y)   \preceq         \frac{{\bf C}_{x, y}}{n^{3/2}}
\quad {\rm and}\quad 
  \ {^{\tilde H}} \tilde{Q}_n(x, y) \sim   \frac{C_{x, y}}{n^{3/2}} 
$$
where $y\mapsto {\bf C}_{x, y} $ and $y\mapsto C_{x, y}$ are positive functions 
in $\mathcal B_\Psi$ for any $x \in \mathbb Z$.

As in  \eqref{iteratesHQ}, for any $\ell \geq 1$  and $\varphi \in \mathcal B_{\Psi}$,
  it holds   that 
\begin{align*}
 {^{\tilde H}} \tilde Q_n ^{(\ell)}(\varphi)&:= \sum_{j_1+ \ldots + j_\ell=n}
  {^{\tilde H}}\tilde Q_{j_1}\ldots {^{\tilde H}} \tilde Q_{j_\ell}(\varphi)
 = 
\frac{1}{ \tilde \rho^\ell_\Psi \tilde H} \tilde Q^{(\ell)}_n( \tilde H \varphi)
\end{align*}
which readily implies
\begin{equation}\label{conjugatetilde}
  \tilde{Q}_n^{(\ell)}\varphi(x)= \tilde \rho^\ell_\Psi \rho^n e^{\lambda x} 
   \tilde H(x) 
{^{\tilde H}} \tilde Q_n ^{(\ell)}(\varphi e^{-\lambda \cdot}/ \tilde H)(x). 
\end{equation}

$\bullet$ \underline{End of the proof}

We use again the decomposition of trajectories \eqref{decompositiontraject}: 
for $x, y\in \mathbb Z $ and $n \geq 1$, 
$$
\mathbb P_x[X_n=y]
  =  \sum_{\ell=0}^{+\infty} \sum _{k=0}^n  Q_k^{(\ell)} (  {\bf V}_{n-k, y})(x),
$$
 where $    {\bf V}_{n-k, y}$ is defined in \eqref{hny} and \eqref{h'ny}. 

 In the  case ({\bf N, P}), the sequences $(n^{3/2} 
   {\bf V}_{n, y}/\rho^n)_{n \geq 1}$ for $y \leq 0 $ and 
 $(n^{3/2}   {\bf V}_{n, y}/{\rho'}^n)_{n \geq 1}$ for $y \geq 1$ 
  converge in $\mathcal B_\Psi$ to some non negative limit function which can
   be explicit as in \eqref{hy} and \eqref{h'y} by  using 
    \eqref{valeurrelativisation}.
    In particular, when $y \geq 1$,  
     the sequences $(n^{3/2}   {\bf V}_{n, y}/{\rho}^n)_{n \geq 1} $
       converge in $\mathcal B_\Psi$ to $0$ when $\rho>\rho'$.  
 By using  \eqref{conjugatetilde}, the expression \eqref{returnTransientcase}  
  of $\mathbb P_x[X_n=y]
  $ in the case  $(\bf Z, P)$  may be rewritten here as 
\begin{equation}\label{returnCaseNP}
\mathbb P_x[X_n=y]
     =
  e^{\lambda x}   \tilde H(x) \rho^n \sum_{\ell=0}^{+\infty} \tilde \rho_\Psi^\ell
 \bigg(\sum _{k=0}^{n}   {^{\tilde H}} \tilde Q  _k^{(\ell)} 
 \bigg(\dfrac{  {\bf V}_{n-k, y}\ e^{-\lambda \cdot}}{\tilde H  \rho^{n-k}}
 \bigg)\bigg)(x).
\end{equation}
 We conclude as in the previous section by applying  Theorem \ref{theogouezel+}, 
  with $\mathcal R_n={^{\tilde H} }\tilde Q_n$. We  obtain $\mathbb P_x[X_n=y]
 \sim C_{x, y} \rho^n/n^{3/2}$ for some strictly positive constant $C_{x, y}$.

 \subsection{The transient case   \texorpdfstring{$\bf  (P, P)$}{PP}}

 The asymptotic behavior   strongly depends on the relative positions of
  the graphs of the Laplace transforms $L$ and $L'$; thus, there are several 
  subcases to consider. Let us first present the general strategy.
 
 Recall that   $L$ and $L'$ reach respectively their  minima at  $\lambda$ and
  $\lambda'$ and that $\rho = L(\lambda)$ and $\rho'=L'(\lambda')$. 
  Since $\mathbb E[\xi_1]$ and  
 $\mathbb E[\xi'_1]$  are both positive, the reals $\lambda$ and $\lambda'$ 
 are both negative.

 As in the case $({\bf N, P)}$, we consider  the   Banach space  $\mathcal B_\Psi$   
  with  
$$
  \ \Psi(x)= \left\{\begin{array}{lll}
e^{ (\vert  \lambda'\vert+\delta) \vert x\vert} & \text{if} & x \leq 0,\\
e^{ (\vert  \lambda\vert+\delta)  x}& \text{if} & x \geq 1.
\end{array}\right.$$
In the sequel, for various values of the real $t$, we  consider the random walks
 $S^{t}$ and ${S'}^{t}$ associated respectively to the  increment distributions 
 $\mu^{t}(\cdot)= e^{ t\cdot}\mu(\cdot)/L(t)$  and ${\mu'}^{ t}(\cdot)= e^{ t\cdot}
 \mu'(\cdot)/L'(t)$; 
these random increments  have respective mean  $\frac{1}{L(t)} \frac{dL}{dt}(t)$ 
and $\frac{1}{L'(t)}\frac{dL'}{dt}(t)$. 

We denote  by $\mathcal X^t=\mathcal X( \mu^{t},  {\mu'}^{t})=   (X_n^t)_{n \geq 0}$ 
the corresponding oscillating random walk  and  $Q^t$ the transition probability of 
 its switching subprocess.  Writing $Q= \sum_{n \geq 1} Q_n$ and  $Q^t= 
 \sum_{n \geq 1} Q^t_n$ as previously,  it holds that, for any $x, y \in \mathbb Z$,
\begin{align*} 
Q_n(x, y)&=
\left\{
\begin{array}{ccccc}
e^{t(x-y)} L(t)^n  Q_n^t(x, y) & {\rm if} & x\leq 0&{\rm and} &y\geq 1,\\
e^{t(x-y)} L'(t)^n Q_n^t(x, y) & {\rm if} & x\geq  1& {\rm and} & y\leq 0.\\
\end{array}
\right.  
\end{align*}
The parameter $t$ is fixed according to the studied subcase; 
its choice   is done   in order to transfer the problem  to  one of the cases 
${\bf (P, N), (Z, Z), (P, Z)},  {\bf (Z, P) }$ or  ${\bf (N, Z) }$ which have
 been previously studied (the case  ${\bf (N, Z) }$ is similar to ${\bf (Z, P)}$, 
 by exchanging the role of $\mu$ and $\mu'$). Once the parameter $t$ is fixed, 
 in order to ``catch" the exponential factor $\max(L(t), L'(t))^n$, we introduce
  and study  another sequence of operators $(\tilde Q_n)_{n \geq 1}$, depending 
  on $t$ and  given by 
 
$\bullet$ if $L(t) \geq L'(t)$,
\begin{align*} 
\tilde Q_n(x, y)&:= 
\left\{
\begin{array}{ccccc}
 Q_n^t(x, y) & {\rm if} & x\leq 0&{\rm and} &y\geq 1,\\
 \left({L'(t)\over L(t)}\right)^n Q_n^t(x, y) & {\rm if} & x\geq  1& {\rm and} & 
 y\leq 0.\\
\end{array}
\right.  
\end{align*}
 
$\bullet$   otherwise, $L(t) < L'(t)$ and
\begin{align*} 
\tilde Q_n(x, y)&:= 
\left\{
\begin{array}{ccccc}
  \left({L(t)\over L'(t)}\right)^n   Q_n^t(x, y) & {\rm if} & x\leq 0&{\rm and} 
  &y\geq 1,\\
 Q_n^t(x, y) & {\rm if} & x\geq  1& {\rm and} & y\leq 0.\\
\end{array}
\right.  
\end{align*}
We set $\displaystyle \tilde Q= \sum_{n \geq 1} \tilde Q_n$. When $L(t)=L'(t)$,  
it holds that $\tilde Q= Q^t$; furthermore,   depending on the values of the drifts of  $\mu^t$ 
and ${\mu'}^t$, this operator  can be either  markovian or submarkovian. 
When $L(t)\neq L'(t)$, the operator $\tilde Q$ is submarkovian.  
When $\tilde Q$  is markovian, we  apply the strategy developed in the recurrent 
cases  $(\bf P, N),  (\bf Z, Z)$ an $(\bf P, Z)$ while in the  submarkovian situations, 
  the spectral radius  of $\tilde Q$ is strictly less than $1$ in $\mathcal B_\Psi$ and
we apply the  strategy developed in the case $(\bf Z, P)$.
\subsubsection{The subcase {\bf A} : $\lambda = \lambda'$} 
 \underline{We fix here $t= \lambda=\lambda'$.}
 
\noindent \underline{\bf A1 : $\rho=\rho'$}
%

In this case, for any $x, y \in \mathbb Z$, it holds that  
\ $
\mathbb P_x[X_n=y]= \rho^n e^{\lambda  (x-y)}\mathbb P_x[X_n^{\lambda}=y], 
$
where $(X_n^{\lambda})_{n \ge 0}$ is a ({\bf Z, Z}) oscillating random walk. Hence 
$\mathbb P_x[X_n^{\lambda}=y]\sim  C_{y} /\sqrt{n}$ by Section \ref{sectionrecurrent}.

  \noindent    \underline{{\bf A2 :}  $\rho>\rho'$}
%
%

   We consider  the sequence of operators $(\tilde{Q}_n)_{n \geq 1}$  
     defined by 
\begin{equation*} 
\tilde{Q}_n(x, y):= 
\left\{
\begin{array}{ccccc}
 \mathbb P[\tau^{\overset{\circ}{S_{\ }} }(x)=n,  
 x+{\overset{\circ}{S_n}}=y ]&
  {\rm if} & x\leq 0&{\rm and} &y\geq 1,\\
   r^n \mathbb P[\tau^{ {\overset{\circ}{S'}}} (x)=n, 
    x+{\overset{\circ}{S'_n}}=y ]
   & {\rm if} & x\geq  1& {\rm and} & y\leq 0.\\
\end{array}
\right.  
\end{equation*}
where $r = \rho'/\rho<1$. We set $\tilde Q= 
\sum_{n \geq1} \tilde Q_n$ and 
notice that $
Q_n^{(\ell)}\varphi(x)= \rho^n e^{\lambda x} 
\tilde{Q}_n^{(\ell)}(\varphi /e^{\lambda \cdot})(x)
$
for any  function $\varphi\in \mathcal B_{\Psi}$ 
and $\ell \geq 1$.
The condition $r<1$ implies that  the operator $\tilde Q$
  is submarkovian  and that its spectral radius $\rho_\Psi$ in
   $\mathcal B_\Psi$ is strictly less than 1.
   By applying  the same scheme as in the 
  transient case $\bf(N, P)$, we  
 obtain   $\mathbb P_x[X_n=y]\sim C_{x,y}\rho^n/n^{3/2}$ 
 as $n \to +\infty$, for some  strictly positive constant 
  $C_{x,y}$. 

  \noindent    \underline {\bf A2   :   $\rho <\rho'$}
 
 Analogously, we set $r=\rho/\rho'<1$ and  define the sequence of 
 operators $(\tilde{Q}_n)_{n \geq 1}$ by 
\begin{equation*} 
\tilde{Q}_n(x, y):=
\left\{
\begin{array}{ccccc}
 r^n \mathbb P[\tau^{ {\overset{\circ}{S_{\ }} }}(x)=n, 
 x+{\overset{\circ}{S_n}}=y]& {\rm if} & x\leq 0&{\rm and} &
   y\geq 1,\\ 
    \mathbb P[\tau^{\overset{\circ}{S'}}(x)=n,
     x+{\overset{\circ}{S'_n} =y}]& 
    {\rm if} & x\geq  1&{\rm and} & y\leq 0.\\
\end{array}
\right.  
\end{equation*}
The rest of the argument is the same as the one in the case 
{\bf A2 :}  $\rho>\rho'$

   \subsubsection{The subcase {\bf B} : $\lambda < \lambda'$}
 
 \noindent    \underline {\bf B1 : $\rho = \rho'$}
 
In this case,  there exists a  unique $\lambda_\star \in
 (\lambda, \lambda')$ 
such that  $L(\lambda_\star)= L'(\lambda_\star)$; 
furthermore $\rho_\star:=
L(\lambda_\star)= L'(\lambda_\star) \in (\rho, 1)$,  
 ${dL\over dt}(\lambda_\star)>0$ 
and ${dL'\over dt}(\lambda_\star)<0$. 

\underline{We fix $t = \lambda_\star$.}
The random walks $S^{\lambda_\star}$ and ${S'}^{\lambda_\star}$ 
 are non centered  
 with  respective means $\frac{1}{\rho_\star}{dL\over dt}
 (\lambda_\star)>0$ and 
 $\frac{1}{\rho_\star}{dL'\over dt}(\lambda_\star)<0$.  
 As a consequence, 
 the oscillating random walk $\mathcal X( \mu^{\lambda_\star},  
 {\mu'}^{\lambda_\star}) $ is ${\bf (P, N)}$; thus, it 
 is positive recurrent 
 on $\mathbb Z$ with the unique invariant probability measure 
 $\nu_\star$. 
Then,  for any $x, y \in \mathbb Z$, and as $n\to +\infty$,
 we  conclude that 
\[
\mathbb P_x[X_n=y]= \rho^n_\star  
e^{\lambda_\star (x-y)}\mathbb P_x[X_n^{\lambda_\star}=y] \sim \rho^n_\star e^{\lambda_\star (x-y)} \nu_\star(y).
\]
 \underline {\bf B2 : $\rho < \rho' \ \&\  L(\lambda')<\rho'$}
 
\underline{We fix $t= \lambda'$} 
and  introduce 
the sequence of operators $(\tilde{Q}_n)_{n \geq 1}$   defined by 
\begin{equation*} 
\tilde{Q}_n(x, y):=
\left\{
\begin{array}{ccccc}
 r^n  \mathbb P[\tau^{S^{\lambda'}}(x)=n,  x+S^{\lambda'}_n=y ]&
  {\rm if} & x\leq 0&{\rm and} & y\geq 1,
 \\
     \mathbb P[\tau^{\overset{\circ}{S'}}(x)=n,   x+{\overset{\circ}{S'_n} =y}]&
      {\rm if} & x\geq 1&{\rm and} &  y\leq 0.\
\end{array}
\right.  
\end{equation*}
with $r = L(\lambda')/\rho' <1$.
The inequality  $  r<1$ implies that 
  the operator $\tilde Q= \sum_{n \geq1} \tilde Q_n$ is 
  submarkovian. The rest of the proof works as in  the 
   case  ${\bf (N, P)}$, via a Doob's transform. This yields the asymptotic 
   behavior $\mathbb P_x[X_n=y]
   \sim C_{x, y} {\rho'}^n/n^{3/2}$  as $n \to +\infty$, 
   where $C_{x, y}>0$. 
   
   \noindent    \underline {\bf B3 : $\rho < \rho' \ \&\  L(\lambda')=\rho'$}
%

\underline{We fix $t= \lambda'$.} 
For any $x, y \in \mathbb Z$, it holds that  
$
\mathbb P_x[X_n=y]= {\rho'}^n e^{\lambda'  (x-y)}\mathbb P_x[X^{\lambda'}_n=y] 
$
where $(X_n^{\lambda'})_{n \ge 0}$ is a {\bf(P, Z)} oscillating 
random walk since
${dL\over dt}(\lambda')>0$.  By Section \ref{sectionrecurrent}, 
it holds that   
$\mathbb P_x[X_n^{\lambda'}=y]\sim  C_{y} /\sqrt{n}$  with $C_y>0$. 
This yields the expected asymptotic
 behavior $\mathbb P_x[X_n=y] \sim C_{x, y} {\rho'}^n/\sqrt{n}$,   with $C_{x,y}>0$, 
 as $n \to +\infty$.

 \noindent    \underline {\bf B4 : $\rho < \rho' \ \& \ L(\lambda')>\rho'$ }

In this case,  there exists a  unique $\lambda_\star \in (\lambda, \lambda')$ such 
that  $L(\lambda_\star)= L'(\lambda_\star)=: \rho_\star$. Furthermore  
$\rho_\star \in (\rho, 1)$,  ${dL\over dt}(\lambda_\star)>0$ and
 ${dL'\over dt}(\lambda_\star)<0$. \underline{We fix $t = \lambda_\star$} 
 and adopt the same argument as in  the case {\bf B1}. 
  
 \noindent    \underline {\bf B5 : $\rho > \rho' \ \& \ L'(\lambda)<\rho$ }
 
\underline{We fix $t= \lambda$} and introduce the sequence of 
operators $(\tilde{Q}_n)_{n \geq 1}$  where $\tilde{Q}_n$ is defined by 
\begin{equation*} 
\tilde{Q}_n(x, y):=
\left\{
\begin{array}{ccccc}
 \mathbb P[\tau^{  \overset{\circ}{S_{\ }} }(x)=n,  x+{\overset{\circ}{S_{\ }} }_n=y 
 ]& {\rm if} & x\leq 0&{\rm and} & y\geq 1,\\
    r^n  \mathbb P[\tau^{ {S'}^{\lambda}}(x)=n,  x+{S'_n}^{\lambda}=y ]& {\rm if} & 
     x\geq 1 &{\rm and} & y\leq 0,\\
\end{array}
\right.  
\end{equation*}
with $r = L'(\lambda)/\rho <1$.
The inequality   $r<1$ implies that  the operator
  $\tilde Q= \sum_{n \geq1} \tilde Q_n$ is  submarkovian  
  on $\mathcal B_\Psi$.
    The  rest of the proof works as in  the 
    case   ${\bf (N, P)}$, via a Doob's transform. 
    This yields the asymptotic behavior $\mathbb P_x[X_n=y] 
    \sim C_{x, y} 
    \rho ^n/n^{3/2}$ 
   as $n \to +\infty$, with $C_{x, y}>0$.

 \noindent    \underline {\bf B6 : $\rho > \rho' \ \& \ L'(\lambda)=\rho$ }

\underline{We fix $t= \lambda$.}
For any $x, y \in \mathbb Z$, it holds that  
$
\mathbb P_x[X_n=y]= {\rho}^n e^{\lambda   (x-y)}
\mathbb P_x[X^{\lambda}_n=y] 
$
where $(X_n^{\lambda})_{n \ge 0}$ is a ${\bf(Z, N)}$ oscillating 
random walk since
${dL'\over dt}(\lambda)<0$.  By Section \ref{sectionrecurrent}
 (notice that the case $({\bf Z, N)}$ is analogous to $({\bf P,  Z)} $
  by exchanging the role of $\mu$ and $\mu'$), it holds that   
$\mathbb P_x[X_n^{\lambda}=y]\sim  C_{y}/\sqrt{n} $.
This yields the asymptotic behavior $\mathbb P_x[X_n=y]\sim C_{x, y} 
{\rho}^n/\sqrt{n}$ as $n \to +\infty$, with $C_{x, y} >0$..

 \noindent    \underline {\bf B7 : $ \rho > \rho'  \ \& \ L'(\lambda)>\rho$ }

In this case,  there exists a  unique $\lambda_\star \in (\lambda, \lambda')$ such that  $L(\lambda_\star)= L'(\lambda_\star)=: \rho_\star$;  moreover $\rho_\star \in (\rho, 1)$,  ${dL\over dt}(\lambda_\star)>0$ and ${dL'\over dt}(\lambda_\star)<0$. \underline{We fix $t = \lambda_\star$} and we apply the same argument as in  the cases   {\bf B1} and {\bf B4}.

   \subsubsection{The subcase {\bf C} : $\lambda > \lambda'$}
   
  As in  the  case {\bf B}, we have  totally  7 subcases to consider: 
\begin{center}
   \begin{tabular}{|c|c|}
   \hline
\ & {\small Condition}   
 \\
\hline \hline 
{\bf C1}  & $\rho=\rho'$   
 \\
  \hline \hline
  {\bf C2} &$\rho<\rho'  \  \& \ L(\lambda')< \rho'$     
            \\
      \hline
  {\bf C3} &$\rho<\rho'  \  \& \ L(\lambda')=\rho'$    
  \\       
            \hline
  {\bf C4} &$\rho<\rho'  \  \& \ L(\lambda')> \rho'$     
            \\
   \hline \hline
       {\bf C5} &$\rho>\rho'  \  \& \ L'(\lambda)< \rho$    
            \\
      \hline
  {\bf C6} &$\rho>\rho'  \  \& \ L'(\lambda)=\rho$   
  \\       
            \hline
  {\bf C7} &$\rho>\rho'  \  \& \ L'(\lambda)> \rho$    
            \\
   \hline
   \end{tabular}
  \end{center}
 
 $\bullet$ The proof in the subcases {\bf C2} and {\bf C5} is
  essentially  the same  as in {\bf B2} or {\bf B5} with
   \underline{ $t=\lambda'$}.
 
 $\bullet$ In the subcases {\bf C1}, {\bf C4} and {\bf C7}, we  choose 
 \underline{$t=\lambda_\star$} as for the subcases 
  {\bf B1}, {\bf B4} and {\bf B7}. Hence
 \[
\mathbb P_x[X_n=y]= \rho^n_\star  e^{\lambda_\star 
(x-y)}\mathbb P_x[X_n^{\lambda_\star}=y]
\]
The condition $\lambda>\lambda'$ implies  that 
 ${dL\over dt}(\lambda_\star)<0$ and ${dL'\over dt}
 (\lambda_\star)>0$. Hence, the  oscillating random walk 
 $\mathcal X( \mu^{\lambda_\star},  {\mu'}^{\lambda_\star}) $
  is ${\bf  (N, P)}$ and falls within the transient framework
   studied in Subsection \ref{subsectionNP}. 
 Notice that  the Laplace transform $L_{\mu^{\lambda_\star}}$  
  of the measure $\mu^\star$   reaches its minimum at $\lambda-\lambda_\star$  and equals $\rho/\rho_\star$ (with a similar statement for ${\mu'}^\star$).  
Consequently,   as $n\to +\infty$, 
\[
\mathbb P_x[X_n^{\lambda_\star}=y]\sim   \left({\max(\rho, \rho')\over \rho_\star}\right)^n/n^{3/2} \quad (\text{up to a multiplicative positive constant})
\]

  $\bullet$ In the subcase {\bf C3},  we  choose 
   \underline{$t=\lambda' $}. 
 The condition  $\lambda>\lambda'$ implies that 
  ${dL\over dt}(\lambda')<0$ and ${dL'\over dt}(\lambda')=0$. Hence,
   the  oscillating random walk  $\mathcal X( \mu^{\lambda'},
    {\mu'}^{\lambda'})$  is ${\bf  (N, Z)}$   (which is analogous to 
    the case ${\bf  (Z, P)}$) and  falls within the transient
     framework studied in Subsection \ref{sectiontransientZP}. Hence
 for any $x, y \in \mathbb Z$, it holds that  
$
\mathbb P_x[X_n=y]= {\rho'}^n  
 e^{\lambda'(x-y)}\mathbb P_x[X_n^{\lambda'}=y]
$ 
with $\mathbb P_x[X_n^{\lambda'}=y]\sim C_{x, y}/n^{3/2}$ as 
$n \to +\infty$, with $C_{x, y} >0$.

$\bullet$ In the subcase  {\bf C6}, we choose 
\underline{$t=\lambda$}. Since $\lambda>\lambda'$, it 
holds that  ${dL\over dt}(\lambda)=0$ and ${dL'\over dt}(\lambda)>0$. Hence,
 the  the oscillating random walk $\mathcal X( \mu^{\lambda}, 
 {\mu'}^{\lambda}) $ is ${\bf  (Z, P)}$ and falls within the transient
  framework studied in subsection \ref{sectiontransientZP}. Hence
 for any $x, y \in \mathbb Z$, it holds that  
$
\mathbb P_x[X_n=y]= \rho^n  e^{\lambda (x-y)}\mathbb P_x[X_n^{\lambda}=y]
$ 
with $\mathbb P_x[X_n^{\lambda}=y]\sim C_{x, y}/n^{3/2}$ as $n \to +\infty$, with $C_{x, y}>0$.



\end{document}